\newcommand{\RR}{\mathbb{R}}
\newcommand{\CC}{\mathbb{C}}
\newcommand{\NN}{\mathbb{N}}
\newcommand{\ZZ}{\mathbb{Z}}
\DeclareMathOperator{\Span}{span}
\newtheorem{Tw}{Theorem}
\newtheorem{Le}{Lemma}
\newtheorem{St}{Proposition} 
\newtheorem{Wn}{Corollary}
\theoremstyle{remark}
\newtheorem{Uw}{Remark}
\theoremstyle{definition}
\newtheorem{Df}{Definition}
\begin{document}
\title[Spherical polyharmonics and Poisson kernels]{Spherical polyharmonics and Poisson kernels for polyharmonic functions}
\keywords{spherical polyharmonics, zonal polyharmonics, polyharmonic functions, Poisson kernel, Gegenbauer polynomials, Cauchy-Hua kernel}
\subjclass[2010]{31B30, 32A25, 32A50, 35J40}
\author{Hubert Grzebu{\l}a}
\address{Faculty of Mathematics and Natural Sciences,
College of Science\\
Cardinal Stefan Wyszy\'nski University\\
W\'oycickiego 1/3,
01-938 Warszawa, Poland}
\email{h.grzebula@student.uksw.edu.pl}
\author{S{\l}awomir Michalik}
\address{Faculty of Mathematics and Natural Sciences,
College of Science\\
Cardinal Stefan Wyszy\'nski University\\
W\'oycickiego 1/3,
01-938 Warszawa, Poland}
\email{s.michalik@uksw.edu.pl}
\urladdr{\url{http://www.impan.pl/~slawek}}
 
\begin{abstract}
We introduce and develop the notion of spherical polyharmonics, which are a natural generalisation of spherical harmonics.
In particular we study the theory of zonal polyharmonics, which allows us, analogously to zonal harmonics, to construct Poisson kernels
for polyharmonic functions on the union of rotated balls.
We find the representation of Poisson kernels and zonal polyharmonics in terms of the Gegenbauer polynomials.
We show the connection between the classical Poisson
kernel for harmonic functions on the ball, Poisson kernels for polyharmonic functions on the union of rotated balls, and the Cauchy-Hua kernel
for holomorphic functions on the Lie ball.
 \end{abstract}

\maketitle
\section{Introduction}
In the paper we introduce the notion of so called 
spherical polyharmonics of degree $m$ and of order $p$ as $m$-homogeneous and
$p$-polyharmonic polynomials restricted to the
 union of the rotated euclidean unit spheres
$\widehat{S}_p:=\bigcup_{k=0}^{p-1}e^{\frac{k\pi i}{p}}S$, where $m,p\in\NN$. We develop the theory of spherical polyharmonics in a similar way to that of
spherical harmonics given in \cite[Section 5]{A-B-R}. The crucial role in our considerations plays the Hilbert space $L^2(\widehat{S}_p)$  of
square-integrable functions on $\widehat{S}_p$
with an inner product defined by
\begin{equation}
\label{I:3}
\left\langle f,g\right\rangle _{ \widehat{S}_p }:=\frac{1}{p} \int\limits_S \sum_{j=0}^{p-1}f(e^{\frac{j\pi i}{p}} \zeta )
\overline{g(e^{\frac{j\pi i}{p}} \zeta )}\,d\sigma (\zeta),
\end{equation}
where $\sigma$ is the normalised surface-area measure on $S$ (so that $\sigma(S)=1$).

The introduction of such space allows us to prove the polyharmonic version of the theorem about spherical harmonic decomposition of
$L^2(S)$ (see \cite[Theorem 5.12]{A-B-R}). Namely we prove that $ L^2 ( \widehat{S}_p )  $ is a direct sum of the spaces
$\mathcal{H}_m^p ( \widehat{S}_p)$ of spherical polyharmonics  of degree $m$ and of order $p$ (Theorem \ref{Tw:1}):
\begin{equation*}
L^2 ( \widehat{S}_p )=\bigoplus_{m=0}^{\infty} \mathcal{H}_m^p ( \widehat{S}_p ).
\end{equation*}

Due to this theorem we can consider the space $\mathcal{H}_m^p(\widehat{S}_p)$ as a finitely dimensional Hilbert space with
the inner product (\ref{I:3})
induced from $L^2 ( \widehat{S}_p )$. It allows us to introduce zonal polyharmonics $Z^p_m(x,\zeta)$ in a similar way to zonal harmonics.

After developing the theory of zonal polyharmonics, we obtain the expected formulas for Poisson kernels for the union of the rotated euclidean
unit balls $\widehat{B}_p:=\bigcup_{k=0}^{p-1}e^{\frac{k\pi i}{p}}B$ (Theorem \ref{Wn:7}):
\begin{equation*}
P_p(x,\zeta)=\sum_{m=0}^{\infty}Z^p_m(x,\zeta)=\frac{1-|x|^{2p}}{(x^2\overline{\zeta}^2-2x\cdot \overline{\zeta}+1)^{n/2}}\quad \textrm{for}
\quad x\in \widehat{B}_p,\ \zeta \in \widehat{S}_p.
\end{equation*} 

In the rest of the paper we study the properties of these kernels.
In particular, using the Gegenbauer polynomials, we will find series expansions of Poisson kernels $P_p(x,\zeta)$ and
zonal polyharmonics $Z_m^p(z,\zeta)$. We will also show that Poisson kernels $P_p(x,\zeta)$ for $\widehat{B}_p$ are, in some sense,
intermediate kernels between the classical Poisson kernel for the unit ball $B$ in $\RR^n$ and the Cauchy-Hua kernel for the Lie ball in $\CC^n$. 

The motivation for the study of spherical polyharmonics comes from the Dirichlet-type problem for polyharmonic complex functions,
which has been examined in our previous work \cite{G-M}. That problem, inspired by papers \cite{L} and \cite{S-M}, is different from the classical
boundary value problems for polyharmonic functions (see below).

Namely, the problem is following:
find a polyharmonic function $u$ of order $p$ on
$\widehat{B}_p$, such that $u$ is continuous in $\widehat{B}_p\cup \widehat{S}_p$ 
and satisfies the boundary conditions
$u(x)=f(x)$ for $x\in \widehat{S}_p$, 
where $ p\in \NN $ and the function $f$ is given and continuous in $\widehat{S}_p$.
This problem is concisely written as
\begin{equation}
\label{I:1}
   \begin{cases}
   \Delta^pu(x)=0, & \text{} x\in \widehat{B}_p \\
   u(x)=f(x), & \mbox{} x\in \widehat{S}_p.
   \end{cases}
   \end{equation}
\cite[Theorem 1]{G-M} states that the solution of (\ref{I:1}) is unique and is given by the sum of Poisson type integrals:
\begin{equation}
\label{I:2}
u(x) =\frac{1}{p}\sum_{k=0}^{p-1} 
\int\limits_{S} \frac{1-|x|^{2p}}{|e^\frac{-k\pi i}{p}x-\zeta|^n }f(e^\frac{k\pi i}{p}\zeta) \,d\sigma(\zeta).
\end{equation}

In particular for $p=1$ the problem (\ref{I:1}) reduces to the classical Dirichlet problem for harmonic functions on the ball $B$,
which solution has a form 
\begin{equation*}
u(x) =
\int\limits_{S} \frac{1-|x|^{2}}{|x-\zeta|^n }f(\zeta) \,d\sigma(\zeta),
\end{equation*} 
where the function $P(x,\zeta) = \frac{1-|x|^{2}}{|x-\zeta|^n }$
is the classical Poisson kernel for the unit ball $B$. Here the question arises how to define a Poisson type kernel for the union of
rotated balls $\widehat{B}_p$,
because the  formula (\ref{I:2}) does not supply information about it.

However, we know also from the theory of harmonic functions that the Poisson kernel for the ball can be expressed in terms of zonal harmonics,
which are a particular case of spherical harmonics. It prompts us to introduce the space of spherical polyharmonics
$\mathcal{H}_m^p(\widehat{S}_p)$ and to study their properties.

It is worth to mention that boundary value problems for polyharmonic functions have recently been extensively investigate,
see for example \cite{B-D-W}, \cite{D-Q-W} or \cite{G-G-S} and the references given there. Unlike the problem (\ref{I:1}),
in the mentioned papers the boundary conditions consist of differential operators (for instance the normal derivatives or iterated Laplacians).
In particular in the paper \cite{D-Q-W} the authors consider the problem with $L^m$ boundary data in the euclidean unit ball:
\begin{equation}
\label{Du}
   \begin{cases}
   \Delta^pu(x)=0, & \text{} x\in B, \\
   \Delta^ju(x)=f_j(x), & \mbox{} x\in S.
   \end{cases}
   \end{equation} 
where $0\leq j<p$ and $1\leq m\leq \infty $. They introduce the sequence of higher order Poisson kernels which satisfy appropriate conditions
(see Definition 2.1 in \cite{D-Q-W}) and they give the integral representation solutions of the problem (\ref{Du}).
The kernels and the integral representation solutions given there are different from our ones because they are closely related to the problem
(\ref{Du}), whereas our problem (\ref{I:1}) is completely different. 

The paper is organised as follows. The next section consists of some basic notations which we use throughout this paper.

In Section 3 we examine the polyharmonic polynomials. We recall  some important properties of harmonic polynomials and we prove their analogues
for polyharmonic polynomials. We also calculate the dimension of the space of polyharmonic polynomials (Proposition \ref{Wn:3}).

In Section 4 we introduce the notion of spherical polyharmonics. We prove their properties
(Corollaries \ref{Wn:1}--\ref{Wn:2} and Propositions \ref{Le:4}--\ref{Le:6}). We also prove the main theorem 
about spherical polyharmonic decomposition of $ L^2 ( \widehat{S}_p )$ (Theorem \ref{Tw:1}).

In Section 5 we introduce zonal polyharmonics (Definition \ref{Df:5}). We find the connection between zonal polyharmonics
and zonal harmonics (Theorem \ref{Tw:2}), and we find the orthogonal decomposition functions from $L^2(\widehat{S}_p )$
(Theorem \ref{Tw:3}).

In Section 6 we define Poisson kernels for the union of rotated balls and Poisson integrals for functions continuous on $\widehat{S}_p$.
We find a counterpart of  \cite[Proposition 5.31]{A-B-R} for zonal polyharmonics (Proposition \ref{Tw:4}).
Finally we find formulas for the Poisson kernels for the sum of rotated balls (Theorem \ref{Wn:7}) and we give some properties of these kernels
(Proposition \ref{Tw:5}). As an application we solve the Dirichlet-type problem given by (\ref{I:1}) (Theorem \ref{Wn:9}).

In the next section we find an explicite formula for zonal polyharmonics (Theorem \ref{Tw:7}). We use here the Gegenbauer polynomials
and a generating formula for them.
 
In the last section we find the connection between the Poisson kernels and the Cauchy-Hua kernel. As a corollary, we conclude that for every
holomorphic function on the Lie ball and continuous on the Lie sphere there exist
polyharmonic functions, which are convergent to it (Theorem \ref{th:4}).

\section{Preliminaries}
In this section we give some basic notations and definitions.

We define the real norm
\begin{equation*}
|x|= ( \sum_{j=1}^nx_j^2 )^{1/2}\quad\textrm{for}\quad x=(x_1,\dots,x_n)\in \RR^n
\end{equation*}
and the complex norm
\begin{equation*}||z||= (\sum_{j=1}^n|z_j|_{\CC}^2 )^{1/2} \quad\textrm{for}\quad z=(z_1,\dots,z_n)\in \CC^n
\end{equation*} 
with $ |z_j|_{\CC}^2=z_j \overline{z}_j$. We will also use the complex extension of the real norm for complex vectors: 
\begin{gather*}
|z|=( \sum_{j=1}^nz_j^2 )^{1/2}\quad\textrm{for}\quad z=(z_1,\dots,z_n)\in\CC^n.
\end{gather*}
By a square root in the above formula we mean the principal square root, where a branch cut is taken along the non-positive real axis.
Obviously the function $|\cdot|$ is not a norm in $\CC^n$, because it is complex valued and hence the function $|z-w|$ is not a metric on $\CC^n$.

We will  consider mainly complex vectors of the form $ z=e^{i\varphi}x $, that is vectors $x\in \RR^n $ rotated in $\CC^n$ by the angle $\varphi$.

For the set $G\subseteq\RR^n$ and the angle $\varphi\in\RR$ we will consider the rotated set defined by
$$e^{i\varphi}G:=\{e^{i\varphi}x:x\in G\}.$$
We will consider mainly the following unions of rotated sets in $\CC^n$:
$$
\widehat{B}_p:=\bigcup_{k=0}^{p-1}e^{\frac{k\pi i}{p}}B\quad\textrm{and}\quad\widehat{S}_p:=\bigcup_{k=0}^{p-1}e^{\frac{k\pi i}{p}}S\quad\textrm{for}
\quad p\in\NN,
$$
where $B$ and $S$ are respectively the unit ball and sphere in $\RR^n$ with a centre at the origin.

The sets
$$ LB:=\{z\in \mathbb{C}^n\colon L(z)<1 \},\qquad
LS:=\{e^{i\varphi }x\colon \varphi \in \mathbb{R},\ x\in S \},$$
where  
$$ L(z)=\sqrt{||z||^2+\sqrt{||z||^4-|z^2|_{\CC}^2}}, $$
with $z^2=z\cdot z$, are called the \emph{Lie ball} and the \emph{Lie sphere}, respectively.

\begin{Uw}
\label{Re:1}
 Observe that the sets $B,\widehat{B}_p, LB$ and $S,\widehat{S}_p,LS$, as the subsets of $\CC^n$, are related by the inclusions
 $$ B=\widehat{B}_1\subset \widehat{B}_p\subset\widehat{B}_{kp}\subset LB\quad\textrm{and}\quad S=\widehat{S}_1\subset \widehat{S}_p
 \subset\widehat{S}_{kp}\subset LS$$
 for every $k,p\in\NN$, $k,p>1$.
\end{Uw}

\section{Polyharmonic polynomials}
In this section we recall the notion of harmonic polynomials and next we extend it to polyharmonic polynomials.

Let $m,p\in \mathbb{\NN}$. We denote by $\mathcal{P}_m(\CC^n)$ the space of all homogeneous polynomials of degree $m$ on $\CC^n$.

We also denote by $ \mathcal{H}_m^p(\CC^n)\subseteq \mathcal{P}_m(\CC^n)$ the space of polynomials on $\CC^n$, which are homogeneous of degree $m$
and are polyharmonic of order~$p$. Observe that in the case $m<2p$ the space $ \mathcal{H}_m^p(\CC^n)$ is equal to $\mathcal{P}_m(\CC^n)$.

In the special case $p=1$ we obtain the space of homogeneous harmonic polynomials of degree $m$,
which is denoted briefly by $\mathcal{H}_m(\CC^n)$.

Let us recall the lemmas relating to the harmonic polynomials.

\begin{Le}[{\cite[Propositions 5.5]{A-B-R}}]
\label{Le:dod_1}
If $m\geq 2$ then 
\begin{equation*}
\mathcal{P}_m(\CC^n)=\mathcal{H}_m(\CC^n)\oplus |x|^2\mathcal{P}_{m-2}(\CC^n),
\end{equation*}
where $\oplus$ denotes the algebraic direct sum,
which means that every element of $\mathcal{P}_m(\CC^n)$ can be uniquely written as the sum of an element of $\mathcal{H}_m(\CC^n)$ and an element of
$|x|^2\mathcal{P}_{m-2}(\CC^n)$.
\end{Le}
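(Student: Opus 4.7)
The plan is to equip $\mathcal{P}_m(\CC^n)$ with the \emph{Fischer inner product}: for $p,q\in\mathcal{P}_m(\CC^n)$ set $\langle p,q\rangle := p(\partial)\overline{q}(0)$, where $p(\partial)$ denotes the constant-coefficient differential operator obtained by substituting $\partial_j$ for $x_j$ in $p$, and $\overline{q}$ is the polynomial with complex-conjugated coefficients. On the monomial basis this pairing is $\langle x^\alpha, x^\beta\rangle = \alpha!\,\delta_{\alpha\beta}$, hence it is Hermitian and positive-definite on the finite-dimensional space $\mathcal{P}_m(\CC^n)$.

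The key step is the adjointness identity
\[
\langle |x|^2 p, q\rangle = \langle p, \Delta q\rangle \qquad \text{for all } p\in\mathcal{P}_{m-2}(\CC^n),\ q\in\mathcal{P}_m(\CC^n).
\]
This reduces to the operator identity $(|x|^2 p)(\partial) = \Delta\, p(\partial)$, which holds because partial derivatives commute and $|x|^2 = x_1^2+\cdots+x_n^2$. Then
$\langle |x|^2 p, q\rangle = \Delta\bigl(p(\partial)\overline{q}\bigr)(0) = p(\partial)\bigl(\Delta\overline{q}\bigr)(0) = p(\partial)\overline{\Delta q}(0) = \langle p, \Delta q\rangle$,
where the last equality uses that the Laplacian has real coefficients and so commutes with conjugation of the coefficients of $q$.

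With adjointness in hand, I would characterise the orthogonal complement of $|x|^2 \mathcal{P}_{m-2}(\CC^n)$ inside $\mathcal{P}_m(\CC^n)$: a polynomial $q$ lies in it iff $\langle p, \Delta q\rangle = 0$ for every $p\in\mathcal{P}_{m-2}(\CC^n)$. Since $\Delta q\in\mathcal{P}_{m-2}(\CC^n)$ and the Fischer inner product is non-degenerate there, this forces $\Delta q = 0$, i.e.\ $q\in\mathcal{H}_m(\CC^n)$. Positive-definiteness upgrades the resulting sum to an orthogonal direct sum, giving $\mathcal{P}_m(\CC^n) = \mathcal{H}_m(\CC^n)\oplus |x|^2\mathcal{P}_{m-2}(\CC^n)$, and in particular the algebraic direct sum asserted.

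The main obstacle I anticipate is simply the bookkeeping for complex-coefficient polynomials on $\CC^n$: one must keep in mind that $|x|^2$ here is the complex-extended quadratic polynomial $\sum x_j^2$ introduced in the Preliminaries, not a Hermitian squared norm, so that the formal substitution $x_j\mapsto\partial_j$ indeed yields the Laplacian $\Delta$. Beyond this care, every step is a routine manipulation on the monomial basis.
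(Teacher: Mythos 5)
Your proof is correct. Note first that the paper itself gives no proof of this lemma: it is quoted verbatim from \cite[Proposition 5.5]{A-B-R}, so the only comparison available is with the argument in that reference. There the decomposition is obtained by showing that the linear map $q\mapsto\Delta(|x|^2q)$ is injective (hence bijective) on $\mathcal{P}_{m-2}$, the injectivity resting on the solvability and uniqueness of the Dirichlet problem for the ball with polynomial boundary data. Your route via the Fischer pairing $\langle p,q\rangle=p(\partial)\overline{q}(0)$ is a genuinely different, purely algebraic argument: the adjointness $\langle |x|^2p,q\rangle=\langle p,\Delta q\rangle$ identifies $\mathcal{H}_m(\CC^n)$ as the orthogonal complement of $|x|^2\mathcal{P}_{m-2}(\CC^n)$, and finite-dimensional linear algebra does the rest. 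What this buys is twofold: it avoids any appeal to potential theory, and it works verbatim for complex-coefficient polynomials, which is the setting the present paper actually needs (here $|x|^2=\sum_j x_j^2$ is the complex-extended quadratic form, not a Hermitian norm, and you correctly flag that this is exactly what makes the substitution $x_j\mapsto\partial_j$ produce $\Delta$). As a bonus your argument shows the two summands are orthogonal for the Fischer product, a refinement not claimed in the lemma. The only cosmetic remark: the identity $(|x|^2p)(\partial)=\Delta\,p(\partial)$ and the Hermitian symmetry used to pass from $q\perp |x|^2\mathcal{P}_{m-2}$ to $\langle p,\Delta q\rangle=0$ are both one-line monomial computations, so nothing essential is missing.
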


\begin{Le}[{\cite[Theorem 5.7]{A-B-R}}]
\label{Le:2}
If $q\in \mathcal{P}_m(\CC^n)$ then there exist unique $q_{m-2k}\in\mathcal{H}_{m-2k}(\CC^n), k=0,1,\dots,[\frac{m}{2}] $  such that
$$q(x)=\sum_{k=0}^{[\frac{m}{2}]}|x|^{2k}q_{m-2k}(x).$$
\end{Le}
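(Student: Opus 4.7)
The plan is to prove this by induction on the degree $m$, using Lemma \ref{Le:dod_1} as the engine for the inductive step.

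First I would dispose of the base cases $m=0$ and $m=1$. For these degrees we have $2 > m$, so every homogeneous polynomial of degree $m$ is automatically harmonic (there is nothing to differentiate twice without killing it), i.e.\ $\mathcal{P}_m(\CC^n)=\mathcal{H}_m(\CC^n)$. Hence one takes $q_m:=q$ and the sum in the statement has only the $k=0$ term, which gives both existence and uniqueness trivially.

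For the inductive step, assume $m\geq 2$ and that the decomposition holds (with uniqueness) for every homogeneous polynomial of degree less than $m$. Given $q\in\mathcal{P}_m(\CC^n)$, apply Lemma \ref{Le:dod_1} to write uniquely
\begin{equation*}
q(x)=q_m(x)+|x|^2 r(x),\quad q_m\in\mathcal{H}_m(\CC^n),\ r\in\mathcal{P}_{m-2}(\CC^n).
\end{equation*}
By the induction hypothesis applied to $r$, there exist unique $q_{m-2-2j}\in\mathcal{H}_{m-2-2j}(\CC^n)$ for $j=0,\dots,\lfloor(m-2)/2\rfloor$ with
\begin{equation*}
r(x)=\sum_{j=0}^{\lfloor (m-2)/2\rfloor}|x|^{2j}q_{m-2-2j}(x).
\end{equation*}
Substituting and reindexing $k=j+1$ yields
\begin{equation*}
q(x)=q_m(x)+\sum_{k=1}^{\lfloor m/2\rfloor}|x|^{2k}q_{m-2k}(x)=\sum_{k=0}^{\lfloor m/2\rfloor}|x|^{2k}q_{m-2k}(x),
\end{equation*}
which is the required existence, noting that $\lfloor(m-2)/2\rfloor+1=\lfloor m/2\rfloor$ in both parities.

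For uniqueness, suppose $q$ admits two decompositions of the stated form. Subtracting, we obtain an identity $\sum_{k=0}^{\lfloor m/2\rfloor}|x|^{2k}h_{m-2k}(x)=0$ with $h_{m-2k}\in\mathcal{H}_{m-2k}(\CC^n)$. Separating the $k=0$ term gives $h_m=-|x|^2\sum_{k=1}^{\lfloor m/2\rfloor}|x|^{2k-2}h_{m-2k}$, where the right-hand side lies in $|x|^2\mathcal{P}_{m-2}(\CC^n)$. The uniqueness part of Lemma \ref{Le:dod_1} forces $h_m=0$ and $\sum_{k=1}^{\lfloor m/2\rfloor}|x|^{2k-2}h_{m-2k}=0$ in $\mathcal{P}_{m-2}(\CC^n)$; the latter is exactly the uniqueness hypothesis at degree $m-2$, so all $h_{m-2k}$ vanish by induction.

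The main (only) obstacle is book-keeping: making sure the index shift between $r$ and $q$ is handled correctly for both even and odd $m$, and that the uniqueness argument is routed through Lemma \ref{Le:dod_1} rather than attempted directly. Neither point is substantive, so the whole argument is essentially a clean induction on $m$.
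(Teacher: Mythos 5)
Your proof is correct. The paper gives no proof of this lemma (it is quoted directly from Axler--Bourdon--Ramey, Theorem 5.7), and your induction on $m$ via the direct sum decomposition of Lemma \ref{Le:dod_1} is exactly the standard argument used there, with the index bookkeeping and the uniqueness step both handled properly.
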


\begin{Le}[{\cite[Propositions 5.8]{A-B-R}}]
\label{Le:dod_2}
The spaces $\mathcal{P}_m(\CC^n)$ and $\mathcal{H}_m(\CC^n)$ are finite-dimensional and
\begin{eqnarray*}
 \dim \mathcal{P}_m(\CC^n)&=&\binom{n+m-1}{n-1},\\
 \dim\mathcal{H}_m(\CC^n)&=&\left\{
  \begin{array}{lll}
    \dim \mathcal{P}_m(\CC^n) & \textrm{for} & m=0,1\\
    \dim \mathcal{P}_m(\CC^n)-\dim \mathcal{P}_{m-2}(\CC^n) & \textrm{for} & m\geq 2
  \end{array}
  \right..
\end{eqnarray*}
\end{Le}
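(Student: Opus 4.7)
The plan is to verify the two dimension formulas separately, treating the $\mathcal{P}_m$ count first since it feeds into the $\mathcal{H}_m$ count via Lemma \ref{Le:dod_1}.

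First I would show $\dim \mathcal{P}_m(\CC^n)=\binom{n+m-1}{n-1}$ by exhibiting the obvious monomial basis. The space $\mathcal{P}_m(\CC^n)$ has a basis consisting of the monomials $z^\alpha = z_1^{\alpha_1}\cdots z_n^{\alpha_n}$ indexed by multi-indices $\alpha\in\NN^n$ with $|\alpha|=\alpha_1+\cdots+\alpha_n=m$. Linear independence is immediate (distinct monomials are linearly independent as functions on $\CC^n$), and the spanning property is just the definition of homogeneity of degree $m$. The number of such multi-indices is $\binom{n+m-1}{n-1}$ by the standard stars-and-bars argument, which I would state without belabouring.

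Next I would handle the harmonic dimension formula. For $m=0$ (constants) and $m=1$ (linear forms), the Laplacian kills every element of $\mathcal{P}_m(\CC^n)$ tautologically, so $\mathcal{H}_m(\CC^n)=\mathcal{P}_m(\CC^n)$ and the two dimensions agree. For $m\geq 2$ I would invoke Lemma \ref{Le:dod_1}, which gives the algebraic direct sum
\begin{equation*}
\mathcal{P}_m(\CC^n)=\mathcal{H}_m(\CC^n)\oplus |x|^2\mathcal{P}_{m-2}(\CC^n).
\end{equation*}
Taking dimensions, $\dim\mathcal{P}_m(\CC^n)=\dim\mathcal{H}_m(\CC^n)+\dim\bigl(|x|^2\mathcal{P}_{m-2}(\CC^n)\bigr)$, so it remains to show that the multiplication map $q\mapsto |x|^2 q$ from $\mathcal{P}_{m-2}(\CC^n)$ to $|x|^2\mathcal{P}_{m-2}(\CC^n)$ is a linear isomorphism. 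Linearity and surjectivity are obvious from the definition; injectivity follows from the fact that $\CC[x_1,\dots,x_n]$ is an integral domain, so $|x|^2 q\equiv 0$ forces $q\equiv 0$. Hence $\dim\bigl(|x|^2\mathcal{P}_{m-2}(\CC^n)\bigr)=\dim\mathcal{P}_{m-2}(\CC^n)$, and rearranging yields the claimed formula.

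There is no real obstacle here: the only non-cosmetic input is Lemma \ref{Le:dod_1}, which is quoted. The single step that deserves an explicit sentence is the injectivity of multiplication by $|x|^2$, since otherwise one might forget to justify why the dimension of $|x|^2\mathcal{P}_{m-2}(\CC^n)$ coincides with that of $\mathcal{P}_{m-2}(\CC^n)$. Everything else is bookkeeping with the direct-sum decomposition.
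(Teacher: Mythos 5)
Your proof is correct and follows essentially the same route as the cited source \cite[Proposition 5.8]{A-B-R} — the paper itself quotes this lemma without giving a proof. The monomial basis count for $\mathcal{P}_m(\CC^n)$, the direct-sum decomposition from Lemma \ref{Le:dod_1}, and the explicit justification that multiplication by $|x|^2$ is injective (so that $\dim\bigl(|x|^2\mathcal{P}_{m-2}(\CC^n)\bigr)=\dim\mathcal{P}_{m-2}(\CC^n)$) are exactly the right ingredients, and you have flagged the one step that genuinely needs a sentence.
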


To extend the above results to polyharmonic case, we use a following version of the Almansi theorem \cite[Proposition 1.3]{A-C-L}
for homogeneous polynomials.
\begin{St}
\label{F:1}
Let $m\geq 2p $ (resp. $m<2p$). A function $u\in \mathcal{H}_m^p(\CC^n)$ if and only if there exist uniquely determined functions
$ u_k\in  \mathcal{H}_{m-2k}(\CC^n)$,
$k=0,1,\dots,p-1$ (resp. $k=0,1,\dots,[\frac{m}{2}]$)
such that
\begin{gather}
\label{eq:1}
u(x)=\sum_{k=0}^{p-1}|x|^{2k}u_k(x)\qquad\textrm{for}\quad x\in \CC^n
\end{gather}
and respectively
\begin{equation*}
u(x)=\sum_{k=0}^{[\frac{m}{2}]}|x|^{2k}u_k(x)\qquad\textrm{for}\quad x\in \CC^n.
\end{equation*}
\end{St}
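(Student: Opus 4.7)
The plan is to base everything on Lemma \ref{Le:2} together with one explicit computation of iterated Laplacians. If $m<2p$, then $\Delta^p$, being a differential operator of order $2p$, annihilates every polynomial of degree $m<2p$, so $\mathcal{H}_m^p(\CC^n)=\mathcal{P}_m(\CC^n)$ and the conclusion is just the statement of Lemma \ref{Le:2}. I therefore assume $m\geq 2p$ throughout the argument below.

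The computational core is the identity
\begin{equation*}
\Delta(|x|^{2k}v)=2k(2d+2k+n-2)|x|^{2k-2}v
\end{equation*}
for any $v\in\mathcal{H}_d(\CC^n)$, obtained by combining Euler's relation $x\cdot\nabla v=dv$ with $\Delta(|x|^{2k})=2k(2k+n-2)|x|^{2k-2}$ via the product rule. Since the right-hand side has the same shape (an even power of $|x|$ times the same harmonic $v$), iterating $p$ times yields
\begin{equation*}
\Delta^p(|x|^{2k}v)=c_{k,p,d}\,|x|^{2k-2p}v,\qquad c_{k,p,d}:=\prod_{j=0}^{p-1}2(k-j)\bigl(2d+2(k-j)+n-2\bigr).
\end{equation*}
For the implication $(\Leftarrow)$, each term $|x|^{2k}u_k$ with $0\leq k\leq p-1$ is killed by $\Delta^p$, because the factor $2(k-j)$ with $j=k$ vanishes; summing gives $\Delta^p u=0$, hence $u\in\mathcal{H}_m^p(\CC^n)$.

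For the implication $(\Rightarrow)$, I start from the Lemma \ref{Le:2} decomposition $u=\sum_{k=0}^{[m/2]}|x|^{2k}q_{m-2k}$ with $q_{m-2k}\in\mathcal{H}_{m-2k}(\CC^n)$, apply $\Delta^p$, and use the previous paragraph to kill the summands with $k\leq p-1$. What remains is a Lemma \ref{Le:2} decomposition of the zero polynomial in $\mathcal{P}_{m-2p}(\CC^n)$, so the uniqueness in Lemma \ref{Le:2} forces $c_{k,p,m-2k}\,q_{m-2k}=0$ for every $k$ with $p\leq k\leq [m/2]$. Setting $u_k:=q_{m-2k}$ for $0\leq k\leq p-1$ then produces the decomposition (\ref{eq:1}); uniqueness of the $u_k$ is inherited from Lemma \ref{Le:2} after padding the claimed decomposition with zeros up to index $[m/2]$.

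The main obstacle is verifying that $c_{k,p,m-2k}\neq 0$ in the range $p\leq k\leq [m/2]$. The factors $2(k-j)$ are positive since $k-j\geq k-p+1\geq 1$, and the factors $2(m-k-j)+n-2$ are positive because $k+j\leq [m/2]+p-1\leq m-1$ (this last inequality being equivalent to $[m/2]\leq m-p$, which holds since $m\geq 2p$), so $m-k-j\geq 1$ and the factor is at least $n>0$. This is the one step that genuinely uses both the hypothesis $m\geq 2p$ and the range $k\leq [m/2]$ supplied by Lemma \ref{Le:2}.
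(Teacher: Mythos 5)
Your proof is correct, but it takes a genuinely different route from the paper's. The paper invokes the Almansi theorem of Aronszajn--Creese--Lipkin as a black box for the existence of harmonic $u_k$ with $u=\sum_{k=0}^{p-1}|x|^{2k}u_k$, and then asks the reader to inspect the construction in that reference to see that the $u_k$ are unique and homogeneous of degree $m-2k$; the converse is dismissed as obvious. You instead reduce everything to Lemma \ref{Le:2} plus the explicit identity $\Delta(|x|^{2k}v)=2k(2d+2k+n-2)|x|^{2k-2}v$ for $v\in\mathcal{H}_d(\CC^n)$: iterating it kills the terms with $k\le p-1$ (which also makes the ``obvious'' converse explicit), and applying $\Delta^p$ to the Lemma \ref{Le:2} decomposition of $u$ leaves a decomposition of $0$ in $\mathcal{P}_{m-2p}(\CC^n)$ whose uniqueness, together with your verification that the constants $c_{k,p,m-2k}$ do not vanish for $p\le k\le[\frac m2]$, forces $q_{m-2k}=0$ for $k\ge p$. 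Your nonvanishing check is where $m\ge 2p$ enters, and it is done correctly ($k-j\ge 1$ and $2(m-k-j)+n-2\ge n$). What your approach buys is self-containedness: homogeneity and uniqueness of the $u_k$ fall out of Lemma \ref{Le:2} rather than from an appeal to the proof of an external theorem, and the argument is purely algebraic (the identity holds for the complexified form $|x|^2=\sum x_j^2$ since it is a polynomial identity). What the paper's route buys is brevity and consistency with its later repeated use of the Almansi theorem. Both treatments of the case $m<2p$ coincide. One cosmetic remark: the formula $\Delta^p(|x|^{2k}v)=c_{k,p,d}|x|^{2k-2p}v$ is only formal when $k<p$ (negative exponent); it would be cleaner to say that $\Delta^{k+1}(|x|^{2k}v)=0$ and hence $\Delta^p(|x|^{2k}v)=0$ for $p>k$.
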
 
\begin{proof} 
Let $m\geq 2p$. If $u\in  \mathcal{H}_m^p({\CC}^n) $, then by the Almansi theorem \cite[Proposition 1.3]{A-C-L} there are functions $u_k$ each harmonic on
${\CC}^n$ such that the formula (\ref{eq:1}) is valid. From construction of these functions (see proof of \cite[Proposition 1.3]{A-C-L})
it is easy to note that $u_k$ are uniquely determined and homogeneous of degree $m-2k$ for $k=0,1,\dots,p-1$.

If $m<2p$, then the statement is simply a restatement of Lemma \ref{Le:2} because here we have $\mathcal{H}_m^p(\CC^n)=\mathcal{P}_m(\CC^n)$.

The second implication is obvious.
\end{proof}
\begin{Uw} 
\label{agawa}
From now on, unless stated otherwise, we will assume that every $u\in \mathcal{H}_m^p(\CC^n)$ has the expansion (\ref{eq:1}) as in the case
$m\geq 2p$, keeping in mind that $u_k(x)\equiv 0$ for $k>[\frac{m}{2}]$ in the case when $m<2p$.
\end{Uw}

Now we are ready to find the analogues of Lemmas \ref{Le:dod_1}--\ref{Le:dod_2} for polyharmonic polynomials
\begin{St}
\label{F:2}
If $m\geq 2p$ then
\begin{equation*}
\label{eq:8}
\mathcal{P}_m(\CC^n)=\mathcal{H}^p_m(\CC^n) \oplus |x|^{2p}\mathcal{P}_{m-2p}(\CC^n),
\end{equation*}
where $\oplus$ denotes the algebraic direct sum.
\end{St}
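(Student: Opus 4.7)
The plan is to combine the Almansi-type Proposition \ref{F:1} with the canonical decomposition of Lemma \ref{Le:2}. First I would take an arbitrary $q\in\mathcal{P}_m(\CC^n)$ and invoke Lemma \ref{Le:2} to write it uniquely as
\[
q(x)=\sum_{k=0}^{[\frac{m}{2}]}|x|^{2k}q_{m-2k}(x),\qquad q_{m-2k}\in\mathcal{H}_{m-2k}(\CC^n).
\]
Since $m\geq 2p$, I would split this sum at $k=p-1$, setting $q=q'+q''$ with
\[
q'(x)=\sum_{k=0}^{p-1}|x|^{2k}q_{m-2k}(x),\qquad q''(x)=|x|^{2p}\sum_{j=0}^{[\frac{m}{2}]-p}|x|^{2j}q_{m-2p-2j}(x).
\]
By Proposition \ref{F:1} the polynomial $q'$ is already in Almansi form, hence $q'\in\mathcal{H}_m^p(\CC^n)$, while the inner sum appearing in $q''$ is a homogeneous polynomial of degree $m-2p$, so $q''\in |x|^{2p}\mathcal{P}_{m-2p}(\CC^n)$. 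This yields the sum decomposition $\mathcal{P}_m(\CC^n)=\mathcal{H}_m^p(\CC^n)+|x|^{2p}\mathcal{P}_{m-2p}(\CC^n)$.

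The substantive step is to verify that the sum is direct, i.e.\ that $\mathcal{H}_m^p(\CC^n)\cap |x|^{2p}\mathcal{P}_{m-2p}(\CC^n)=\{0\}$. My approach will be to take $u$ in the intersection and produce two expansions of it. On the harmonic side, Proposition \ref{F:1} provides $u(x)=\sum_{k=0}^{p-1}|x|^{2k}u_k(x)$ with $u_k\in\mathcal{H}_{m-2k}(\CC^n)$. On the factored side, I would write $u=|x|^{2p}v$ with $v\in\mathcal{P}_{m-2p}(\CC^n)$, expand $v$ by Lemma \ref{Le:2}, and multiply through by $|x|^{2p}$ to obtain $u(x)=\sum_{k=p}^{[\frac{m}{2}]}|x|^{2k}\widetilde{u}_{m-2k}(x)$ with $\widetilde{u}_{m-2k}\in\mathcal{H}_{m-2k}(\CC^n)$. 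Both expressions are then legitimate instances of the Lemma \ref{Le:2} decomposition of the same $u\in\mathcal{P}_m(\CC^n)$ (one vanishing in the range $k\geq p$, the other in the range $k<p$); the uniqueness clause of Lemma \ref{Le:2} will then force every coefficient to be zero, giving $u=0$.

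I do not foresee any serious obstacle beyond careful bookkeeping of indices. The one subtle point will be the comparison of the two expansions of $u$: one must recognise each as a legitimate instance of the Lemma \ref{Le:2} decomposition of the same polynomial (by padding the missing ranges with the zero harmonic polynomial of the appropriate degree), after which the uniqueness statement does all the work.
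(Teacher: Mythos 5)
Your proposal is correct and is essentially the worked-out version of the paper's own one-line proof, which cites the same ingredients: the unique harmonic expansion of Lemma \ref{Le:2} and the Almansi characterisation of Proposition \ref{F:1}. Splitting that expansion at $k=p-1$ for existence and invoking the uniqueness clause for directness is exactly the argument the paper's citation implies.
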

\begin{proof}
It is sufficient to use Lemmas \ref{Le:dod_1}--\ref{Le:2} and Proposition \ref{F:1}.
\end{proof}

\begin{St}
\label{Le:5}
If $q\in \mathcal{P}_m(\CC^n)$, then there exist uniquely determined $q_{m-2kp}\in\mathcal{H}^p_{m-2kp}(\CC^n)$, $k=0,1,\dots,[\frac{m}{2p}] $
such that
$$q(x)=\sum_{k=0}^{[\frac{m}{2p}]}|x|^{2kp}q_{m-2kp}(x).$$
\end{St}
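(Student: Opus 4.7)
The plan is to prove this by strong induction on $m$, using Proposition \ref{F:2} at the inductive step.

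For the base case, suppose $m < 2p$. Then $\left[\frac{m}{2p}\right] = 0$ and $\mathcal{H}_m^p(\CC^n) = \mathcal{P}_m(\CC^n)$ (as remarked at the beginning of Section 3), so any $q \in \mathcal{P}_m(\CC^n)$ satisfies $q(x) = q_m(x)$ with $q_m := q \in \mathcal{H}_m^p(\CC^n)$, and this representation is manifestly unique.

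For the inductive step, assume $m \geq 2p$ and that the claim holds for every degree strictly smaller than $m$. By Proposition \ref{F:2} we have the direct sum decomposition
\begin{equation*}
\mathcal{P}_m(\CC^n) = \mathcal{H}_m^p(\CC^n) \oplus |x|^{2p}\mathcal{P}_{m-2p}(\CC^n),
\end{equation*}
so $q$ can be written uniquely as $q(x) = q_m(x) + |x|^{2p} r(x)$ with $q_m \in \mathcal{H}_m^p(\CC^n)$ and $r \in \mathcal{P}_{m-2p}(\CC^n)$. The inductive hypothesis applied to $r$ yields uniquely determined polynomials $r_{m-2p-2jp} \in \mathcal{H}_{m-2p-2jp}^p(\CC^n)$, $j = 0, 1, \dots, \left[\frac{m-2p}{2p}\right]$, such that
\begin{equation*}
r(x) = \sum_{j=0}^{[\frac{m-2p}{2p}]} |x|^{2jp} r_{m-2p-2jp}(x).
\end{equation*}
Substituting this into the expression for $q$ and reindexing by $k := j+1$, and setting $q_{m-2kp} := r_{m-2kp}$ for $k \geq 1$, we obtain
\begin{equation*}
q(x) = q_m(x) + \sum_{k=1}^{[\frac{m}{2p}]} |x|^{2kp} q_{m-2kp}(x) = \sum_{k=0}^{[\frac{m}{2p}]} |x|^{2kp} q_{m-2kp}(x),
\end{equation*}
where we used the identity $\left[\frac{m-2p}{2p}\right] + 1 = \left[\frac{m}{2p}\right]$ valid for $m \geq 2p$.

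Uniqueness is inherited from both stages of the construction: the splitting $q = q_m + |x|^{2p} r$ is unique by Proposition \ref{F:2}, and the decomposition of $r$ is unique by the inductive hypothesis. The main obstacle — which is really not much of one — is simply checking that the index sets match after reindexing; everything else is a clean assembly of Proposition \ref{F:2} with an induction. I do not expect any analytic difficulty here.
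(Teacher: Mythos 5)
Your proof is correct and follows exactly the route the paper takes: the paper's own proof is the one-line remark ``apply inductively Proposition \ref{F:2},'' and your argument is simply that induction written out in full, with the base case $m<2p$, the reindexing $k=j+1$, and the uniqueness inherited from the direct sum all handled correctly.
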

\begin{proof}
It is sufficient to apply inductively Proposition \ref{F:2}.
\end{proof}

\begin{St}
\label{Wn:3}
The space $\mathcal{H}^p_m(\CC^n)$ is finite dimensional and
\begin{equation*}
\label{eq:11}
\dim\mathcal{H}^p_m(\CC^n)=\left\{
  \begin{array}{lll}
    \dim \mathcal{P}_m(\CC^n)=\binom{n+m-1}{n-1} & \textrm{for} & m<2p\\
    \dim \mathcal{P}_m(\CC^n)-\dim \mathcal{P}_{m-2p}(\CC^n)=&\\
    =\binom{n+m-1}{n-1}-\binom{n+m-2p-1}{n-1}& \textrm{for} & m\geq 2p
  \end{array}
  \right..
\end{equation*}
\end{St}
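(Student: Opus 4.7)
The plan is to deduce the dimension formula directly from the decompositions already established in the section, splitting into the two cases that appear in the statement.

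For the case $m<2p$, I would simply invoke the observation (made right after the definition of $\mathcal{H}_m^p(\CC^n)$) that $\mathcal{H}_m^p(\CC^n)=\mathcal{P}_m(\CC^n)$, together with the dimension formula from Lemma \ref{Le:dod_2}, giving immediately $\dim\mathcal{H}_m^p(\CC^n)=\binom{n+m-1}{n-1}$.

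For the case $m\geq 2p$, the key tool is Proposition \ref{F:2}, which gives the algebraic direct sum decomposition
\begin{equation*}
\mathcal{P}_m(\CC^n)=\mathcal{H}^p_m(\CC^n)\oplus |x|^{2p}\mathcal{P}_{m-2p}(\CC^n).
\end{equation*}
Since the direct sum is algebraic and both spaces are finite-dimensional (by Lemma \ref{Le:dod_2}, together with the fact that $\mathcal{H}_m^p(\CC^n)$ is a subspace of the finite-dimensional $\mathcal{P}_m(\CC^n)$), I get
\begin{equation*}
\dim \mathcal{H}^p_m(\CC^n)=\dim\mathcal{P}_m(\CC^n)-\dim |x|^{2p}\mathcal{P}_{m-2p}(\CC^n).
\end{equation*}
The remaining point is that $\dim |x|^{2p}\mathcal{P}_{m-2p}(\CC^n)=\dim \mathcal{P}_{m-2p}(\CC^n)$. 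This follows because the map $q\mapsto |x|^{2p}q$ is linear from $\mathcal{P}_{m-2p}(\CC^n)$ onto $|x|^{2p}\mathcal{P}_{m-2p}(\CC^n)$, and it is injective as $|x|^{2p}=(x_1^2+\cdots+x_n^2)^p$ is a nonzero polynomial and the polynomial ring $\CC[x_1,\dots,x_n]$ is an integral domain. Substituting the explicit binomial formula for $\dim \mathcal{P}_m(\CC^n)$ from Lemma \ref{Le:dod_2} yields the claimed expression.

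I do not anticipate any real obstacle here: the content is essentially bookkeeping once Proposition \ref{F:2} is in hand, the only minor subtlety being to note that $|x|^{2p}$ is a genuine polynomial (not the complex norm) so multiplication by it is injective on the polynomial ring. Everything else is a direct computation with the two binomial coefficients.
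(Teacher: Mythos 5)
Your proposal is correct and follows essentially the same route as the paper, which simply invokes Lemma \ref{Le:dod_2} and Proposition \ref{F:2}; you have merely spelled out the bookkeeping (including the injectivity of multiplication by $|x|^{2p}$) that the paper leaves implicit.
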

\begin{proof}
 It is sufficient to use Lemma \ref{Le:dod_2} and Proposition \ref{F:2}.
\end{proof}

\section{Spherical polyharmonics and decomposition of $L^2(\widehat{S}_p)$}
In this section we introduce the notion of spherical polyharmonics which are a natural generalisation of spherical harmonics.
\begin{Df}
\label{Df:1}
The restriction to the set $\widehat{S}_p:= \bigcup_{k=0}^{p-1}e^{\frac{k\pi i}{p}}S $ of an element of $\mathcal{H}_m^p(\CC^n)$
is called a \emph{spherical polyharmonic of degree $m$ and order $p$}.

The set of spherical polyharmonics is denoted by $ \mathcal{H}_m^p( \widehat{S}_p ) $, so
\begin{equation*}
\mathcal{H}_m^p( \widehat{S}_p ):=\left\{ u|_{\widehat{S}_p}\colon u\in \mathcal{H}_m^p(\CC^n)   \right\}.
\end{equation*}
\end{Df}

\begin{Df}
The spherical polyharmonics of order $1$ are called \emph{spherical harmonics} and their space is denoted by $\mathcal{H}_m(S):=\mathcal{H}_m^1(S)$
(see \cite[Chapter 5]{A-B-R}). Analogously we write $\mathcal{H}_{m}(\CC^n)$ instead of $\mathcal{H}_{m}^1(\CC^n)$.
\end{Df}

Restricting to $\widehat{S}_p$ the functions $u$ and $u_k$ from Proposition \ref{F:1} (see also Remark \ref{agawa}) we conclude that 
\begin{Wn}
\label{Wn:1}
A function $u\in \mathcal{H}_m^p( \widehat{S}_p )$ if and only if there exist uniquely determined spherical harmonics
$ u_k\in  \mathcal{H}_{m-2k}(\widehat{S}_p)$, $k=0,1,\dots,p-1$, such that
\begin{gather}
\label{eq:2}
u(x)=\sum_{k=0}^{p-1}e^{\frac{2jk\pi i}{p}}u_k(x)\quad\textrm{for}\quad x\in e^{\frac{j\pi i}{p}}S,\quad j=0,1,\dots,p-1.
\end{gather}
\end{Wn}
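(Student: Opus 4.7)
The plan is to deduce the corollary directly from the Almansi-type decomposition of Proposition \ref{F:1}, via the simple observation that the complex-extension $|x|^2$ is constant on each rotated sphere $e^{\frac{j\pi i}{p}}S$.

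First I take $u\in\mathcal{H}_m^p(\widehat{S}_p)$; by Definition \ref{Df:1} it is the restriction of some $\tilde u\in\mathcal{H}_m^p(\CC^n)$, and Proposition \ref{F:1} (together with the convention of Remark \ref{agawa} if $m<2p$) supplies unique $\tilde u_k\in\mathcal{H}_{m-2k}(\CC^n)$ with
\begin{equation*}
\tilde u(x)=\sum_{k=0}^{p-1}|x|^{2k}\tilde u_k(x),\qquad x\in\CC^n.
\end{equation*}
The key computation is that for $x=e^{\frac{j\pi i}{p}}\zeta$ with $\zeta\in S$ one has
\begin{equation*}
|x|^2=\sum_{l=1}^n\bigl(e^{\frac{j\pi i}{p}}\zeta_l\bigr)^2=e^{\frac{2j\pi i}{p}}|\zeta|^2=e^{\frac{2j\pi i}{p}},
\end{equation*}
so that $|x|^{2k}=e^{\frac{2jk\pi i}{p}}$ on $e^{\frac{j\pi i}{p}}S$. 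Setting $u_k:=\tilde u_k|_{\widehat{S}_p}\in\mathcal{H}_{m-2k}(\widehat{S}_p)$ and restricting the Almansi decomposition to $\widehat{S}_p$ then yields the formula (\ref{eq:2}) sheet by sheet.

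For the reverse direction I take arbitrary spherical harmonics $u_k\in\mathcal{H}_{m-2k}(\widehat{S}_p)$, extend each to its unique homogeneous harmonic polynomial $\tilde u_k\in\mathcal{H}_{m-2k}(\CC^n)$, and define $\tilde u(x):=\sum_{k=0}^{p-1}|x|^{2k}\tilde u_k(x)$. Proposition \ref{F:1} gives $\tilde u\in\mathcal{H}_m^p(\CC^n)$, and the same computation shows that $\tilde u|_{\widehat{S}_p}$ coincides with the function $u$ built from (\ref{eq:2}), so $u\in\mathcal{H}_m^p(\widehat{S}_p)$. Uniqueness of the $u_k$ reduces to the uniqueness clause of Proposition \ref{F:1}: two representations of the form (\ref{eq:2}) for the same $u$ reassemble into two Almansi decompositions of a common polyharmonic extension on $\CC^n$, which must coincide, and a homogeneous harmonic polynomial is determined by its restriction to $S\subset\widehat{S}_p$.

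I do not expect any real obstacle; the entire substance is the homogeneity of the complex-extended norm combined with Proposition \ref{F:1}. The only mild subtlety is the case $m<2p$, which is absorbed into the convention of Remark \ref{agawa} declaring $u_k\equiv 0$ for $k>[\frac{m}{2}]$, so that the sum in (\ref{eq:2}) always runs uniformly from $0$ to $p-1$.
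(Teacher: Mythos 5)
Your proposal is correct and follows exactly the route the paper takes: the paper derives Corollary \ref{Wn:1} by restricting the Almansi-type decomposition of Proposition \ref{F:1} to $\widehat{S}_p$, using that $|x|^{2k}=e^{\frac{2jk\pi i}{p}}$ on $e^{\frac{j\pi i}{p}}S$. You merely spell out the converse and the uniqueness argument, which the paper leaves implicit.
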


Analogously, since every polynomial is a sum of homogeneous polynomials, by Proposition \ref{Le:5} we obtain
\begin{Wn}
\label{Wn:2}
If $q$ is a polynomial of degree $m$ then the restriction of $q$ to $\widehat{S}_p$ is a sum of spherical polyharmonics of degrees at most
$m$.
\end{Wn}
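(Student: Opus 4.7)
The plan is to reduce everything to Proposition \ref{Le:5} by decomposing $q$ into its homogeneous components and then using a small but crucial observation about what $|x|^{2p}$ equals on $\widehat{S}_p$.

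First I would write $q = q_0 + q_1 + \dots + q_m$ with $q_j \in \mathcal{P}_j(\CC^n)$ the homogeneous component of degree $j$. Applying Proposition \ref{Le:5} to each $q_j$ separately yields uniquely determined polyharmonic pieces $q_{j,k} \in \mathcal{H}^p_{j-2kp}(\CC^n)$ with
\begin{equation*}
q_j(x) = \sum_{k=0}^{[j/(2p)]} |x|^{2kp}\, q_{j,k}(x) \quad \text{for } x \in \CC^n.
\end{equation*}

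The key observation is that $|x|^{2p} = 1$ for every $x \in \widehat{S}_p$. Indeed, recall the complex extension $|x| = (\sum_{\ell=1}^n x_\ell^2)^{1/2}$. If $x = e^{\frac{k\pi i}{p}}\zeta$ with $\zeta \in S$, then
\begin{equation*}
|x|^2 = \sum_{\ell=1}^n (e^{\frac{k\pi i}{p}}\zeta_\ell)^2 = e^{\frac{2k\pi i}{p}} \sum_{\ell=1}^n \zeta_\ell^2 = e^{\frac{2k\pi i}{p}},
\end{equation*}
since $\zeta \in S \subset \RR^n$ implies $\sum \zeta_\ell^2 = |\zeta|^2 = 1$. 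Raising to the $p$-th power gives $|x|^{2p} = e^{2k\pi i} = 1$, and hence $|x|^{2kp} = 1$ on $\widehat{S}_p$ for every $k$.

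Consequently, restricting the decomposition of $q_j$ to $\widehat{S}_p$ collapses to
\begin{equation*}
q_j\big|_{\widehat{S}_p} = \sum_{k=0}^{[j/(2p)]} q_{j,k}\big|_{\widehat{S}_p},
\end{equation*}
which is, by Definition \ref{Df:1}, a finite sum of spherical polyharmonics of degrees $j-2kp \le j \le m$. Summing over $j = 0, 1, \dots, m$ gives the desired decomposition of $q|_{\widehat{S}_p}$ into spherical polyharmonics of degrees at most $m$. There is no real obstacle here; the only point that requires care is the verification that $|x|^{2p} \equiv 1$ on $\widehat{S}_p$, which is what forces precisely the exponent $2p$ (and not $2$) to appear in Proposition \ref{Le:5} and makes the expansion collapse on the relevant set.
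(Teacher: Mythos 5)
Your proof is correct and follows essentially the same route as the paper, which obtains the corollary by splitting $q$ into homogeneous components and invoking Proposition \ref{Le:5}. The only difference is that you spell out the (implicitly used) fact that $|x|^{2p}=1$ on $\widehat{S}_p$, which is a worthwhile detail to make explicit.
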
 

We will equip the space of functions on $\widehat{S}_p$ with the inner product being a generalisation of the
standard inner product in $L^2(S)$. Namely we have
\begin{Df}
By $ L^2 ( \widehat{S}_p )  $ we mean the usual Hilbert space of square-integrable functions on $\widehat{S}_p$
with the inner product defined by
\begin{equation}
\label{eq:3}
\left\langle f,g\right\rangle _{ \widehat{S}_p }:=\frac{1}{p} \int\limits_S \sum_{j=0}^{p-1}f(e^{\frac{j\pi i}{p}} \zeta )
\overline{g(e^{\frac{j\pi i}{p}} \zeta )}\,d\sigma (\zeta),
\end{equation}
where $\sigma$ is the normalised surface-area measure on $S$.
\end{Df}

We will generalise the properties of spherical harmonics (see \cite[Section 5]{A-B-R}) to spherical polyharmonics.

The main aim of this section
is to show the natural orthogonal decomposition of $L^2( \widehat{S}_p )$ into the spaces of spherical polyharmonics,
what is a polyharmonic version of \cite[Theorem 5.12]{A-B-R}. To this end we recall the definition of the direct sum  of Hilbert spaces
(see \cite[p.~81]{A-B-R}).

\begin{Df}
\label{Df:2}
Let $H$ be a Hilbert space. We say that $H$ is the \emph{direct sum} of spaces $H_m$ and we write $H=\bigoplus_{m=0}^{\infty} H_m$  if the following
conditions are satisfied:
\begin{enumerate}
\item[i)] $H_m$ is a closed subspace of $H$ for every $m$.
\item[ii)] $H_m$ is orthogonal to $H_k$ if $m\neq k$.
\item[iii)] For every $ x \in H$ there exist $ x_m \in H_m$  such that $ x=x_0+x_1+x_2+\dots$, where the sum is converging in the norm of $H$.
\end{enumerate}
\end{Df}

\begin{Uw}
\label{Uw:1}
When the conditions i) and ii) hold, then the condition iii) holds if and only if the set $\Span\bigcup_{m=0}^{\infty} H_m$ is dense in $H$.
\end{Uw}
Let us recall the following orthogonal property of spherical harmonics.
\begin{Le}[{\cite[Proposition 5.9]{A-B-R}}]
\label{Le:1}
If  $ m\neq l $ then $ \mathcal{H}_m(S)$ is orthogonal to $ \mathcal{H}_l(S)$ in $L^2(S)$ with the inner product defined by
\begin{equation*}
 \left\langle f,g \right\rangle_S:=\int_S f(\zeta) \overline{g(\zeta)}\,d\sigma(\zeta).
\end{equation*}

Moreover, if we restrict all functions to $S$, then the decomposition given in Lemma \ref{Le:dod_1} is an orthogonal decomposition
with respect to the inner product on $L^2(S)$.
\end{Le}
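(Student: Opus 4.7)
My plan is to prove the orthogonality of $\mathcal{H}_m(S)$ and $\mathcal{H}_l(S)$ by the classical Green's-identity argument, and then deduce the orthogonality of the decomposition in Lemma \ref{Le:dod_1} from the degree-by-degree orthogonality combined with Lemma \ref{Le:2}.

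For the first part, fix $p\in\mathcal{H}_m(\CC^n)$ and $q\in\mathcal{H}_l(\CC^n)$ with $m\neq l$. Since these are polynomials with complex coefficients on $\CC^n$ whose restrictions to $\RR^n$ remain harmonic (the Laplacian commutes with complex-linear combination), I would apply Green's second identity on the unit ball $B\subset\RR^n$ to $p$ and $\bar q$:
\begin{equation*}
\int_B \bigl(p\,\Delta\bar q-\bar q\,\Delta p\bigr)\,dV=\int_S\bigl(p\,\partial_n\bar q-\bar q\,\partial_n p\bigr)\,d\sigma .
\end{equation*}
The left-hand side vanishes because both polynomials are harmonic. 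On $S$, Euler's homogeneity relation gives $\partial_n p(\zeta)=\zeta\cdot\nabla p(\zeta)=m\,p(\zeta)$ and similarly $\partial_n\bar q(\zeta)=l\,\bar q(\zeta)$, so the right-hand side equals $(l-m)\int_S p\bar q\,d\sigma$. Since $m\neq l$, this forces $\langle p,q\rangle_S=0$.

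For the moreover-part, I would unpack what orthogonality of the decomposition $\mathcal{P}_m(\CC^n)=\mathcal{H}_m(\CC^n)\oplus|x|^2\mathcal{P}_{m-2}(\CC^n)$ means after restricting to $S$: take $h\in\mathcal{H}_m(\CC^n)$ and $|x|^2r$ with $r\in\mathcal{P}_{m-2}(\CC^n)$, and observe that on $S$ the factor $|\zeta|^2$ equals $1$, so that $\langle h,|x|^2 r\rangle_S=\langle h,r\rangle_S$. Applying Lemma \ref{Le:2} to $r$ decomposes it as $r=\sum_{k=0}^{[(m-2)/2]}|x|^{2k}r_{m-2-2k}$ with each $r_{m-2-2k}\in\mathcal{H}_{m-2-2k}(\CC^n)$; on $S$ these factors of $|x|^{2k}$ again disappear, so $r|_S$ becomes a finite sum of spherical harmonics whose degrees are all strictly less than $m$. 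The first part of the lemma then shows $h$ is orthogonal to each summand, hence to $r$ itself, which gives the claimed orthogonal decomposition.

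The proof is essentially mechanical once the right identity is chosen; the only subtlety I expect is the bookkeeping with complex coefficients, namely making sure that Green's identity and Euler's relation apply without change when the polynomials lie in $\mathcal{H}_m(\CC^n)$ rather than its real analogue. That reduces to writing $p=p_1+ip_2$ with $p_1,p_2$ real harmonic polynomials and invoking linearity, so no genuine difficulty arises; the main idea is the degree-lowering argument via Lemma \ref{Le:2} that converts the orthogonality of the decomposition into the already-established orthogonality of homogeneous harmonics of different degrees.
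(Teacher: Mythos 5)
Your proof is correct. The paper itself states this lemma without proof, quoting it as \cite[Proposition 5.9]{A-B-R}, and your argument --- Green's second identity on $B$ combined with Euler's relation $\partial_n p=mp$ on $S$ for the first part, and the reduction of the ``moreover'' part to the first part via Lemma \ref{Le:2} and the fact that $|\zeta|^2=1$ on $S$ --- is exactly the standard argument from that reference, with the passage to complex coefficients handled correctly by linearity.
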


We will find the version of Lemma \ref{Le:1} for spherical polyharmonics.
\begin{St}
\label{Le:4}
If  $ m\neq l $ then $ \mathcal{H}^p_m(S)$ is orthogonal to $ \mathcal{H}^p_l(S)$ in $L^2(\widehat{S}_p)$.
Moreover, if we restrict all functions to $\widehat{S}_p$, then the decomposition given in Proposition \ref{F:2} is  orthogonal
in $L^2(\widehat{S}_p)$.
\end{St}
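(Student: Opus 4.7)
The plan is to expand both $u$ and $v$ via the Almansi decomposition of Proposition \ref{F:1} and reduce $\langle u,v\rangle_{\widehat{S}_p}$ to a double sum over homogeneous harmonic components, in which each term factorises as a product of a geometric phase sum (depending only on $m-l$) and a classical spherical-harmonic inner product on $S$.

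First I would write $u(x)=\sum_{k=0}^{p-1}|x|^{2k}u_k(x)$ with $u_k\in\mathcal{H}_{m-2k}(\CC^n)$ and $v(x)=\sum_{k'=0}^{p-1}|x|^{2k'}v_{k'}(x)$ with $v_{k'}\in\mathcal{H}_{l-2k'}(\CC^n)$, in the sense of Remark \ref{agawa}. Expanding $\langle u,v\rangle_{\widehat{S}_p}$ bilinearly and parameterising $x=e^{j\pi i/p}\zeta$ with $\zeta\in S$, I would use $|e^{j\pi i/p}\zeta|^{2}=e^{2j\pi i/p}$ together with the homogeneity of $u_k$ (degree $m-2k$) and $v_{k'}$ (degree $l-2k'$). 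The phase factors $e^{2jk\pi i/p}$, $e^{j(m-2k)\pi i/p}$, $e^{-2jk'\pi i/p}$, $e^{-j(l-2k')\pi i/p}$ telescope and I expect to obtain
\begin{equation*}
\langle |x|^{2k}u_k,|x|^{2k'}v_{k'}\rangle_{\widehat{S}_p}=\frac{1}{p}\Bigl(\sum_{j=0}^{p-1}e^{j(m-l)\pi i/p}\Bigr)\langle u_k,v_{k'}\rangle_S,
\end{equation*}
with all dependence on $k,k'$ dropping out of the geometric factor entirely.

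I would then kill every term by a single case split. If $m-2k\neq l-2k'$, Lemma \ref{Le:1} gives $\langle u_k,v_{k'}\rangle_S=0$. Otherwise $m-l=2(k-k')$ is a nonzero even integer with $|m-l|\leq 2(p-1)<2p$, so $\omega:=e^{(m-l)\pi i/p}$ satisfies $\omega^{p}=e^{(m-l)\pi i}=1$ but $\omega\neq 1$; hence $\omega$ is a nontrivial $p$-th root of unity and $\sum_{j=0}^{p-1}\omega^{j}=0$. Either way the term vanishes, which proves the orthogonality. For the moreover statement, the clean observation is that on $\widehat{S}_p$ one has $|x|^{2p}=e^{2j\pi i}=1$; hence $|x|^{2p}\mathcal{P}_{m-2p}(\CC^n)$ restricted to $\widehat{S}_p$ coincides with $\mathcal{P}_{m-2p}(\CC^n)$ restricted, which by Corollary \ref{Wn:2} is a sum of spherical polyharmonics of degrees at most $m-2p<m$, and these are orthogonal to $\mathcal{H}^{p}_{m}(\widehat{S}_p)$ by the first part.

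The point to watch is that neither the phase-sum factor nor the harmonic-inner-product factor vanishes individually in all cases: the geometric sum is nonzero when $m-l$ is odd, while $\langle u_k,v_{k'}\rangle_S$ can be nonzero precisely when $m-2k=l-2k'$, which forces $|m-l|<2p$. The two regimes are complementary and the dichotomy $m-2k=l-2k'$ versus $m-2k\neq l-2k'$ is exactly what makes each term of the double sum collapse.
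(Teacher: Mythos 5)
Your proof is correct and follows essentially the same route as the paper's: decompose $u$ and $v$ into homogeneous harmonic components via the Almansi expansion, use homogeneity to pull out the phases $e^{jm\pi i/p}$ and $e^{-jl\pi i/p}$, and reduce each term to a geometric phase sum times a classical inner product $\langle u_k,v_{k'}\rangle_S$; the treatment of the \emph{moreover} part via Corollary \ref{Wn:2} is also identical. The one difference worth noting is your case split: the paper splits globally on the parity of $m-l$ and, in the even case, evaluates the geometric sum as $\frac{1-e^{2\gamma\pi i}}{1-e^{2\gamma\pi i/p}}$, which degenerates when $2p$ divides $m-l$; your termwise dichotomy ($m-2k=l-2k'$ versus not) observes that a nonvanishing harmonic inner product forces $0<|m-l|\le 2(p-1)$, so $e^{(m-l)\pi i/p}$ is a genuinely nontrivial $p$-th root of unity, which handles that degenerate case cleanly and is the tighter argument.
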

\begin{proof}
Let $ u \in \mathcal{H}_m^p ( \widehat{S}_p )$ and $ v \in \mathcal{H}_l^p ( \widehat{S}_p )$for some $ m\neq l $.
Then by Corollary \ref{Wn:1} there exist $ u_k\in  \mathcal{H}_{m-2k}(S)$ and $ v_k\in  \mathcal{H}_{l-2k}(S)$, $k=0,1,\dots,p-1$, such that
\begin{gather*}
u(x)=\sum_{k=0}^{p-1}e^{\frac{2jk\pi i}{p}}u_k(x)\quad\textrm{and}\quad 
v(x)=\sum_{k=0}^{p-1}e^{\frac{2jk\pi i}{p}}v_k(x) 
\end{gather*}
for $ x\in e^{\frac{j\pi i}{p}}S$, $j=0,1,\dots,p-1$.
Hence
 \begin{equation*}
\left\langle u,v\right\rangle_{ \widehat{S}_p } =  \frac{1}{p} \int\limits_S \sum_{j=0}^{p-1}\left[ \sum_{k=0}^{p-1}e^{\frac{2j k\pi i}{p}}u_k(e^{\frac{j\pi i}{p}} \zeta )
 \overline{\sum_{k=0}^{p-1}e^{\frac{2jk\pi i}{p}}v_k(e^{\frac{j\pi i}{p}} \zeta )} \right] d\sigma (\zeta).
\end{equation*}
By the homogeneity of spherical harmonics we have
\begin{gather*}
u_k(e^{\frac{j\pi i}{p}} \zeta )=e^{\frac{(m-2k)j\pi i}{p}}u_k(\zeta),\quad 
v_k(e^{\frac{j\pi i}{p}} \zeta )=e^{\frac{(l-2k)j\pi i}{p}} v_k(\zeta)
\end{gather*}
for $ \zeta \in S$ and $j,k=0,1,\dots,p-1$. So
\begin{eqnarray*}
\left\langle u,v\right\rangle_{ \widehat{S}_p } & =  & \frac{1}{p} \int\limits_S \sum_{j=0}^{p-1}\left[ \sum_{k=0}^{p-1}e^{\frac{mj\pi i}{p}}
u_k( \zeta )\sum_{k=0}^{p-1}e^{\frac{-lj\pi i}{p}} \overline{v_k( \zeta )} \right] d\sigma (\zeta) \\
& = & \frac{1}{p} \sum_{j=0}^{p-1}e^{\frac{(m-l)j\pi i}{p}}  \sum_{0\leq   \alpha  , \beta \leq p-1}\int\limits_S u_{\alpha}( \zeta )
\overline{v_{\beta} ( \zeta )}  d\sigma (\zeta) \\
 & = & \frac{1}{p} \sum_{j=0}^{p-1}e^{\frac{(m-l)j\pi i}{p}}   \sum_{0\leq   \alpha  , \beta \leq p-1}
 \left\langle u_{\alpha},v_{\beta}\right\rangle_S. 
\end{eqnarray*}
By Lemma \ref{Le:1}  $ \left\langle u_{\alpha},v_{\alpha}\right\rangle_S=0 $, because $m-2\alpha \neq l-2\alpha$. Therefore
\begin{eqnarray*}
 \left\langle u,v\right\rangle_{ \widehat{S}_p } & =  & 
\frac{1}{p} \sum_{j=0}^{p-1}e^{\frac{(m-l)j\pi i}{p}}   \sum_{\substack{0\leq   \alpha  , \beta \leq p-1 \\ \alpha \neq  \beta}}
\left\langle u_{\alpha},v_{\beta}\right\rangle_S.
\end{eqnarray*}
Let us assume that $m-l$ is an odd integer number. Then obviously $ \left\langle u_\alpha,v_\beta\right\rangle_S=0 $ for any 
$ \alpha,\beta=0,1,\dots,p-1 $ by Lemma \ref{Le:1}. Hence $  \left\langle u,v\right\rangle_{ \widehat{S}_p }=0.$ 
Let us assume now that $ m-l $ is an even integer number, that is there exist  $ \gamma \in \ZZ $ such that $m-l=2\gamma$, so
\begin{eqnarray*}
 \left\langle u,v\right\rangle_{ \widehat{S}_p } &  =  & \frac{1-e^{2\gamma \pi i}}{p( 1-e^{\frac{2\gamma \pi i}{p}}) }  \sum_{\substack{0\leq   \alpha  , \beta \leq p-1 \\ \alpha \neq  \beta}}
\left\langle u_{\alpha},v_{\beta}\right\rangle_S =0,
\end{eqnarray*}
as desired.

If  $v\in|x|^{2p}\mathcal{P}_{m-2p}(\CC^n)$ then $v|_{\widehat{S}_p}\in\mathcal{P}_{m-2p}(\widehat{S}_p)$, where
$\mathcal{P}_{m-2p}(\widehat{S}_p):=\{u|_{\widehat{S}_p}\colon u\in\mathcal{P}_{m-2p}(\CC^n)\}$.
So, by Corollary \ref{Wn:2}, $v|_{\widehat{S}_p}$ is a sum of spherical polyharmonics of degrees at most $m-2p$.
Hence, by the first part of the proof $\langle v|_{\widehat{S}_p},w|_{\widehat{S}_p}\rangle_{\widehat{S}_p}=0$
for any element $w\in\mathcal{H}_m^p(\CC^n)$. 
\end{proof}

\begin{Uw}
\label{Uw:2}
In the case when $u,v\in\mathcal{H}_m^p ( \widehat{S}_p )$, their 
inner product in $L^2(\widehat{S}_p)$ reduces to the ordinary inner product in $L^2(S)$:
\begin{equation}
\label{eq:12}
\left\langle u,v\right\rangle_{ \widehat{S}_p }=\frac{1}{p} \int\limits_S \sum_{j=0}^{p-1}e^{\frac{mj\pi i}{p}}u( \zeta )
e^{\frac{-mj\pi i}{p}}\overline{v( \zeta )}\,d\sigma (\zeta)=\left\langle u,v\right\rangle_S.
\end{equation}
\end{Uw}

Next we prove
\begin{St}
\label{Le:6}
The linear span of $\bigcup_{m=0}^{\infty}\mathcal{H}^p_m(\widehat{S}_p)$ is a dense subset of $C(\widehat{S}_p)$ with respect to the supremum norm.
\end{St}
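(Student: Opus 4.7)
The plan is to apply the complex Stone--Weierstrass theorem to the algebra $A:=\Span\bigcup_{m=0}^{\infty}\mathcal{H}^p_m(\widehat{S}_p)$ on the compact set $\widehat{S}_p\subset\CC^n$.

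First I would identify $A$ with the space of restrictions to $\widehat{S}_p$ of arbitrary polynomials in $\bigoplus_{m\geq 0}\mathcal{P}_m(\CC^n)$: the inclusion $\subseteq$ is built into Definition \ref{Df:1}, while the opposite inclusion follows by applying Corollary \ref{Wn:2} to each homogeneous component. Once this identification is in place, three of the Stone--Weierstrass hypotheses are immediate: the constants lie in $A$; $A$ is a subalgebra, since the product of two polynomial restrictions is again a polynomial restriction; and the coordinate functions $z_1,\dots,z_n\in\mathcal{P}_1(\CC^n)$ clearly separate points of $\widehat{S}_p\subset\CC^n$.

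The main obstacle, and the only non-routine step, is showing that $A$ is closed under complex conjugation. The key observation will be that the polynomial $z\cdot z=\sum_{j=1}^{n}z_j^2$ records which sheet a point of $\widehat{S}_p$ lies on: for $z=e^{k\pi i/p}\zeta$ with $\zeta\in S$ one has $z\cdot z=e^{2k\pi i/p}$, hence $(z\cdot z)^{p-1}=e^{-2k\pi i/p}$, while at the same time $\bar z_j=e^{-k\pi i/p}\zeta_j=e^{-2k\pi i/p}z_j$. Comparing these for every $k=0,1,\dots,p-1$ yields the identity
\[
\bar z_j=(z_1^2+\cdots+z_n^2)^{p-1}z_j\quad\text{on }\widehat{S}_p,\quad j=1,\dots,n,
\]
so $\bar z_j$ agrees on $\widehat{S}_p$ with a polynomial in $z$. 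By multiplicativity, $\overline{z^{\alpha}}=(z\cdot z)^{(p-1)|\alpha|}z^{\alpha}$ on $\widehat{S}_p$ for every multi-index $\alpha$, and consequently the conjugate of any element of $A$ lies again in $A$. An appeal to the complex Stone--Weierstrass theorem then yields the density of $A$ in $C(\widehat{S}_p)$ in the supremum norm, as required.
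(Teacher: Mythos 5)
Your argument is correct, but it takes a genuinely different route from the paper. The paper works sheet by sheet: it approximates $f$ on each rotated sphere $e^{j\pi i/p}S$ by ordinary polynomials $q_m^j$ via the classical Stone--Weierstrass theorem, and then glues these into a single polynomial on $\widehat{S}_p$ by the explicit discrete-Fourier combination $q_m(x)=\frac1p\sum_{l=0}^{p-1}|x|^{2l}\sum_{k=0}^{p-1}e^{-2kl\pi i/p}q_m^k(x)$, using that $|x|^2=x\cdot x$ equals $e^{2j\pi i/p}$ on the $j$-th sheet so that the inner sums act as indicators of the sheets. You instead verify the hypotheses of the complex Stone--Weierstrass theorem directly on the compact set $\widehat{S}_p$ for the algebra of restricted holomorphic polynomials; the only nontrivial hypothesis is self-adjointness, which you settle with the identity $\bar z_j=(z\cdot z)^{p-1}z_j$ on $\widehat{S}_p$. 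Both arguments ultimately exploit the same mechanism --- the polynomial $z\cdot z$ detects which sheet a point lies on --- but your version is shorter and conceptually cleaner, at the price of being non-constructive, whereas the paper's version hands you an explicit approximating sequence together with a degree bound ($q_m\in\bigcup_{k\le m+2(p-1)}\mathcal{H}^p_k(\widehat{S}_p)$) and displays the Almansi-type structure $\sum_l|x|^{2l}(\cdots)$ that recurs throughout the paper. It is also worth noting that your conjugation identity makes transparent why the restriction to finitely many rotation angles matters: on the full Lie sphere $LS$, where $e^{i\varphi}$ ranges over all angles, the holomorphic polynomials are no longer closed under conjugation and the density fails, consistent with the Cauchy--Hua theory in the last section.
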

\begin{proof}
Let us assume that $f\in C(\widehat{S}_p)$. It means that
\begin{gather*}
f(x)=u_j(x)\quad\textrm{for}\quad x\in e^\frac{j\pi i}{p}S,\quad j=0,1,\dots,p-1,
\end{gather*}
where $u_j$ is a continuous function on the set  $e^\frac{j\pi i}{p}S$. By the Stone-Weierstrass theorem (see \cite[Theorem 7.26]{R})
there exist sequences of polynomials
$(q_m^j)_{m\in\NN}$ such that $q_m^j$ is of degree $m$ and $q_m^j\rightrightarrows u_j$ uniformly on $e^\frac{j\pi i}{p}S$ as $m\to\infty$ for
$j=0,1,\dots,p-1$. So, if we put
\begin{equation*}
q_m(x):=\frac{1}{p}( \sum_{k=0}^{p-1}q^k_m(x)+|x|^2\sum_{k=0}^{p-1}e^\frac{-2k\pi i}{p}q^k_m(x)+\cdots +|x|^{2(p-1)}\sum_{k=0}^{p-1}e^\frac{-2k(p-1)\pi i}{p}q^k_m(x) )
\end{equation*}
then $q_m\rightrightarrows f$ uniformly on $\widehat{S}_p$, with $q_m\in\bigcup_{k=0}^{m+2(p-1)}\mathcal{H}^p_k(\widehat{S}_p)$.
It means that every continuous function on $\widehat{S}_p$ can be approximated uniformly by a sequence of spherical polyharmonics, as desired.
\end{proof} 

Now we are ready to prove the main result of this section
\begin{Tw}
\label{Tw:1}
\begin{equation*}
L^2 ( \widehat{S}_p )=\bigoplus_{m=0}^{\infty} \mathcal{H}_m^p ( \widehat{S}_p ).
\end{equation*}
\end{Tw}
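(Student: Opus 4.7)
The plan is to verify, one by one, the three conditions (i), (ii), (iii) in Definition \ref{Df:2} of a Hilbert space direct sum, applied to $H=L^2(\widehat{S}_p)$ and $H_m=\mathcal{H}_m^p(\widehat{S}_p)$. All three pieces are essentially already available from the preceding results; the proof is mostly an assembly.

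First, for condition (i), I would invoke Proposition \ref{Wn:3}, which gives $\dim \mathcal{H}_m^p(\CC^n)<\infty$. Since restriction to $\widehat{S}_p$ is linear, $\mathcal{H}_m^p(\widehat{S}_p)$ is finite-dimensional as well, hence a closed subspace of $L^2(\widehat{S}_p)$. (One should briefly remark that the restriction map is injective: if $u\in\mathcal{H}_m^p(\CC^n)$ vanishes on $S\subset\widehat{S}_p$, then, being $m$-homogeneous, it vanishes on all of $\CC^n$.)

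Condition (ii), the mutual orthogonality of $\mathcal{H}_m^p(\widehat{S}_p)$ and $\mathcal{H}_l^p(\widehat{S}_p)$ for $m\neq l$, is precisely the first statement of Proposition \ref{Le:4}, so nothing more needs to be said.

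For condition (iii), by Remark \ref{Uw:1} it suffices to show that $\Span\bigcup_{m=0}^{\infty}\mathcal{H}_m^p(\widehat{S}_p)$ is dense in $L^2(\widehat{S}_p)$. Proposition \ref{Le:6} already gives density with respect to the supremum norm in $C(\widehat{S}_p)$. I would then pass to $L^2$-density in two steps: (a) convergence in the supremum norm on $\widehat{S}_p$ implies convergence in the $L^2$-norm defined by (\ref{eq:3}), because the integration is taken over a set of finite measure (in fact the total mass is $1$ by normalisation of $\sigma$); (b) $C(\widehat{S}_p)$ is dense in $L^2(\widehat{S}_p)$ by standard measure theory, since $\widehat{S}_p$ is a finite union of compact metric spaces and the measure in (\ref{eq:3}) is a finite Borel measure on it. Combining (a) and (b) delivers condition (iii).

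The main obstacle, if any, is a minor bookkeeping one: making sure the inner product (\ref{eq:3}) is treated correctly as a genuine $L^2$-inner product on $\widehat{S}_p$, so that uniform approximation on each rotated copy $e^{j\pi i/p}S$ transfers to approximation in the norm built out of the sum over $j$. Once this is noted, the three conditions fall into place and the theorem follows.
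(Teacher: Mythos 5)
Your proposal is correct and follows essentially the same route as the paper: closedness from finite-dimensionality (Proposition \ref{Wn:3}), orthogonality from Proposition \ref{Le:4}, and density via Proposition \ref{Le:6} together with the density of $C(\widehat{S}_p)$ in $L^2(\widehat{S}_p)$. Your explicit justification that uniform convergence on $\widehat{S}_p$ implies convergence in the norm induced by (\ref{eq:3}) is a small point the paper leaves implicit, but it does not change the argument.
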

\begin{proof}
We have to show that the conditions in Definition \ref{Df:2} are satisfied.
The first one holds because  $\mathcal{H}^p_m(\widehat{S}_p)$ is finite dimensional by Proposition \ref{Wn:3}  and hence is closed.
The second condition holds by Proposition \ref{Le:4}.

So it is sufficient to show the density of 
$\Span\bigcup_{m=0}^{\infty} \mathcal{H}_m^p ( \widehat{S}_p )$ in $L^2( \widehat{S}_p )$. 
By Proposition \ref{Le:5}, every polynomial on $\widehat{S}_p$ can be written as a finite sum
of elements of $\bigcup_{m=0}^{\infty} \mathcal{H}_m^p ( \widehat{S}_p )$.
By Proposition \ref{Le:6} we know that the set of polynomials on $\widehat{S}_p$ is a dense subset of the set of continuous functions
$C( \widehat{S}_p )$. 
On the other hand, by Proposition \ref{Le:5} every polynomial on $\widehat{S}_p$ can be written as a finite sum
of elements of $\bigcup_{m=0}^{\infty} \mathcal{H}_m^p ( \widehat{S}_p )$.
Hence the family of finite sums of spherical polyharmonics is dense
in $C( \widehat{S}_p )$ with respect to the supremum norm. It means that
$\Span\bigcup_{m=0}^{\infty} \mathcal{H}_m^p ( \widehat{S}_p )$
is dense in $C( \widehat{S}_p )$.
Additionally $C( \widehat{S}_p )$ is dense in $L^2( \widehat{S}_p )$.
Therefore the set of finite sums of spherical polyharmonics is dense in  $L^2( \widehat{S}_p )$. In consequence 
$\Span\bigcup_{m=0}^{\infty} \mathcal{H}_m^p ( \widehat{S}_p )$ is dense in  $L^2( \widehat{S}_p )$, as desired.
\end{proof}

\section{Zonal polyharmonics}
Let us consider $\mathcal{H}_m^p (\widehat{S}_p)$ as a space  with the inner product (\ref{eq:3}) induced
from $L^2( \widehat{S}_p )$ (we can do that because $\mathcal{H}_m^p (\widehat{S}_p)$ is the closed subspace of the
Hilbert space $L^2( \widehat{S}_p )$). 
Analogously to zonal harmonics we will introduce so called zonal polyharmonics (see \cite[p.~94]{A-B-R}).

Let $ \eta \in \widehat{S}_p$ be a fixed point. Let us consider the linear functional
$\Lambda_{\eta}\colon \mathcal{H}_m^p (\widehat{S}_p) \longrightarrow {\CC} $ defined as 
\begin{gather*}
\Lambda_{\eta} (q)=q(\eta)\quad\textrm{for}\quad q\in \mathcal{H}_m^p (\widehat{S}_p).
\end{gather*}
Since $\mathcal{H}_m^p (\widehat{S}_p)$ is a finite dimensional inner-product space, it is a self-dual Hilbert space.
Hence there exists
a unique $Z_m^p(\cdot,\eta)\in \mathcal{H}_m^p (\widehat{S}_p)$ such that 
\begin{equation}
\label{eq:13}
q(\eta)=\left\langle q,Z^p_m(\cdot,\eta)\right\rangle_{  \widehat{S}_p }\quad\textrm{for every}\quad q\in
\mathcal{H}^p_m(\widehat{S}_p).
\end{equation}

\begin{Df} 
\label{Df:3}
The function $Z^p_m(\cdot,\eta)$ satisfying (\ref{eq:13}) is called a \emph{zonal polyharmonic}
of degree $m$ and of order $p$ with a pole $\eta$. 
\end{Df}

\begin{Uw}
\label{Uw:3}
Zonal polyharmonics of order $p=1$ are called \emph{zonal harmonics}. Throughout this paper we will denote them by
$Z_m(\cdot,\eta)$ instead of $Z^1_m(\cdot,\eta)$ for $\eta \in S$.
\end{Uw}

Let us recall some properties of zonal harmonics. We will use them in the subsequent considerations.
\begin{Le}[{\cite[Proposition 5.27]{A-B-R}}]
\label{Le:zonal_1}
Suppose $\zeta, \eta \in S$. Then:
\begin{enumerate}
\item[(a)] $Z_m(\zeta,\eta)\in \RR$.
\item[(b)] $Z_m(\zeta,\eta)=Z_m(\eta,\zeta)$.
\item[(c)] $Z_m(\eta,\eta)=\dim\mathcal{H}_m(\CC^n)$.
\item[(d)] $|Z_m(\zeta, \eta)|\leq \dim\mathcal{H}_m(\CC^n)$.
\end{enumerate}
\end{Le}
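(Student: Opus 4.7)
The plan is to express $Z_m(\cdot,\eta)$ in terms of a convenient orthonormal basis of $\mathcal{H}_m(S)$ and then read off all four properties from that formula together with the rotation invariance of the inner product $\langle\cdot,\cdot\rangle_S$.

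First I would choose an orthonormal basis $\{Y_1,\dots,Y_{d_m}\}$ of $\mathcal{H}_m(S)$ consisting of functions that are real on the real sphere $S$, where $d_m:=\dim\mathcal{H}_m(\CC^n)$. This is possible because $\mathcal{H}_m(\CC^n)$ is the complexification of the space of homogeneous harmonic polynomials on $\RR^n$ with real coefficients, and on that real subspace $\langle\cdot,\cdot\rangle_S$ is a genuine real inner product to which Gram--Schmidt applies. Writing $Z_m(\cdot,\eta)=\sum_k a_k(\eta)Y_k$ and testing the reproducing identity $\langle Y_j,Z_m(\cdot,\eta)\rangle_S=Y_j(\eta)$ determines $a_k(\eta)=\overline{Y_k(\eta)}=Y_k(\eta)$, giving the closed form
\begin{equation*}
Z_m(\zeta,\eta)=\sum_{k=1}^{d_m}Y_k(\eta)\,Y_k(\zeta).
\end{equation*}

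Properties (a) and (b) are immediate from this formula, since each $Y_k$ takes real values on $S$ and the sum is symmetric in $\zeta,\eta$. For (c) I would first establish the rotation invariance $Z_m(A\zeta,A\eta)=Z_m(\zeta,\eta)$ for every $A\in O(n)$: the map $q\mapsto q\circ A$ preserves $\mathcal{H}_m(S)$ and the inner product $\langle\cdot,\cdot\rangle_S$ (by rotation invariance of $\sigma$), so substituting $q\circ A^{-1}$ into the reproducing property and changing variables shows that $Z_m(A\,\cdot\,,\eta)$ reproduces evaluation at $A^{-1}\eta$; uniqueness in the Riesz representation then yields the invariance. Since $O(n)$ acts transitively on $S$, the diagonal value $Z_m(\eta,\eta)$ is a constant $c_m$; integrating the explicit formula against $\sigma$ and using $\|Y_k\|_S=1$ and $\sigma(S)=1$ gives $c_m=\sum_k\|Y_k\|_S^2=d_m$.

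Finally, for (d) I would apply the reproducing identity with $q=Z_m(\cdot,\zeta)\in\mathcal{H}_m(S)$, which yields $\langle Z_m(\cdot,\zeta),Z_m(\cdot,\eta)\rangle_S=Z_m(\zeta,\eta)$. Specialising $\zeta=\eta$ and using (c) gives $\|Z_m(\cdot,\eta)\|_S^2=d_m$, so Cauchy--Schwarz delivers $|Z_m(\zeta,\eta)|\leq\|Z_m(\cdot,\zeta)\|_S\|Z_m(\cdot,\eta)\|_S=d_m$. The only genuinely subtle point is securing a real orthonormal basis so that (a) holds without conjugation issues and the integral in (c) is manifestly nonnegative; once this is in place, the rotation-invariance argument for (c) is the main step but is routine, and (b) and (d) fall out formally.
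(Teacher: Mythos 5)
The paper does not actually prove this lemma --- it is quoted verbatim from \cite[Proposition 5.27]{A-B-R}. Your argument is correct and is essentially the standard proof given there: expand $Z_m(\cdot,\eta)$ in a real orthonormal basis to get $Z_m(\zeta,\eta)=\sum_k Y_k(\eta)Y_k(\zeta)$, which yields (a) and (b); use rotation invariance of the kernel plus integration over $S$ for (c); and combine the reproducing property with Cauchy--Schwarz for (d).
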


Before we proceed to properties of zonal harmonics, at first we extend the definition of zonal harmonics
$Z_m(\cdot,\eta)$ to the case when $\eta\in\widehat{S}_p$ taking
\begin{gather*}
Z_m(e^{\varphi i}\zeta,e^{\psi i}\eta):=e^{m(\varphi-\psi) i}Z_m(\zeta,\eta)\qquad\textrm{for}\quad\zeta,\eta \in S. 
\end{gather*}
It is obvious that this extension preserves the degree of homogeneity as well as the harmonicity. Also we have for $\zeta,\eta \in S$ that
\begin{gather*}
Z_m(e^{\varphi i}\zeta,e^{\psi i}\eta)=Z_m(e^{-\psi i}\eta,e^{-\varphi i}\zeta)\quad\textrm{and}\quad
\overline{Z_m(e^{\varphi i}\zeta,e^{\psi i}\eta)}=Z_m(e^{\psi i}\eta,e^{\varphi i}\zeta).
\end{gather*}
\begin{St}
\label{F:3}
Let $\zeta,\eta \in \widehat{S}_p$ and $a\in \CC$. Then:
\begin{enumerate}
\item[(a)] $\overline{Z_m^p(\zeta,\eta)}=Z^p_m(\eta,\zeta)$.
\item[(b)] $Z_m^p(\zeta,\zeta)\in \RR$.
\item[(c)] $Z_m^p(a\zeta,\eta)=Z^p_m(\zeta,\overline{a} \eta)$.
\end{enumerate}
\end{St}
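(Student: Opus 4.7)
The plan is to use the defining Riesz identity~\eqref{eq:13} together with the $m$-homogeneity of the polyharmonic polynomial representing $Z^p_m(\cdot,\eta)$; no computation beyond bilinearity and complex conjugation should be needed.

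First I would settle~(a). Because $Z^p_m(\cdot,\zeta)$ itself lies in $\mathcal{H}^p_m(\widehat{S}_p)$, I can substitute it as the test function in~\eqref{eq:13} with pole $\eta$, obtaining
\[
Z^p_m(\eta,\zeta)=\left\langle Z^p_m(\cdot,\zeta),\,Z^p_m(\cdot,\eta)\right\rangle_{\widehat{S}_p}.
\]
Swapping the roles of $\zeta$ and $\eta$ and invoking the conjugate-symmetry of the inner product yields $Z^p_m(\zeta,\eta)=\overline{Z^p_m(\eta,\zeta)}$, which is exactly~(a). Part~(b) is then immediate: setting $\zeta=\eta$ in~(a) shows that $Z^p_m(\zeta,\zeta)$ coincides with its own conjugate.

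For~(c) I would combine~(a) with homogeneity. By Definition~\ref{Df:3} the function $Z^p_m(\cdot,\eta)$ is the restriction of a polynomial in $\mathcal{H}^p_m(\CC^n)\subseteq\mathcal{P}_m(\CC^n)$, so $Z^p_m(a\zeta,\eta)=a^{m}Z^p_m(\zeta,\eta)$ for every $a\in\CC$. Interpreting $Z^p_m(\zeta,\overline{a}\eta)$ via~(a) as $\overline{Z^p_m(\overline{a}\eta,\zeta)}$ and applying the same homogeneity in the first slot of $Z^p_m(\cdot,\zeta)$, I obtain
\[
\overline{Z^p_m(\overline{a}\eta,\zeta)}=\overline{(\overline{a})^{m}Z^p_m(\eta,\zeta)}=a^{m}\overline{Z^p_m(\eta,\zeta)}=a^{m}Z^p_m(\zeta,\eta),
\]
so both sides of~(c) reduce to $a^{m}Z^p_m(\zeta,\eta)$.

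The only delicate point is that for arbitrary $a\in\CC$ the vectors $a\zeta$ and $\overline{a}\eta$ need not remain in $\widehat{S}_p$, so~(c) is most naturally read as an identity between polynomial (respectively antipolynomial in the second slot) extensions of $Z^p_m$; the required extension in the second slot is precisely what~(a) provides. Beyond this book-keeping no substantive obstacle is anticipated.
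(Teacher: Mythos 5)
Your proof is correct and follows essentially the same route as the paper: part (a) is the standard symmetry argument for a reproducing kernel (the paper expands $Z^p_m(\cdot,\eta)$ in an orthonormal basis of $\mathcal{H}^p_m(\widehat{S}_p)$ rather than substituting $Z^p_m(\cdot,\zeta)$ directly into \eqref{eq:13}, but the two computations are equivalent), and (b), (c) are then deduced from (a), which is all the paper does ("the next claims follow simply from the first one"). Your closing remark—that (c) must be read via the homogeneous extension in the first slot and the anti-homogeneous extension in the second slot supplied by (a)—is the correct interpretation and matches the extension \eqref{eq:16} the paper introduces afterwards.
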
 
\begin{proof} 
To prove (a), let $\{e_1,e_2,\dots,e_{h^p_m}\}$ be an orthonormal basis of $\mathcal{H}_m^p (\widehat{S}_p)$,
where $h_m^p:=\dim\mathcal{H}_m^p (\widehat{S}_p)$.
Then for every $\zeta,\eta \in \widehat{S}_p$ we have
\begin{eqnarray*}
Z^p_m(\zeta,\eta)&=&\sum_{k=1}^{h^p_m}\left\langle Z^p_m(\cdot,\eta),e_k\right\rangle_{\widehat{S}_p} e_k(\zeta)=
\sum_{k=1}^{h^p_m} \overline{\left\langle  e_k,Z^p_m(\cdot,\eta)\right\rangle}_{\widehat{S}_p} e_k(\zeta)\\
&=& \sum_{k=1}^{h^p_m}\overline{e_k(\eta)}e_k(\zeta)=\overline{\sum_{k=1}^{h^p_m}\overline{e_k(\zeta)}e_k(\eta)}=\overline{Z^p_m(\eta,\zeta)},
\end{eqnarray*}
which gives the first claim. The next claims follow simply from the first one. 
\end{proof}

Now we will prove the result which gives the connection between zonal polyharmonics and extensions of zonal harmonics.
\begin{Tw}
\label{Tw:2}
Let $\zeta,\eta \in \widehat{S}_p$, then
\begin{equation*}
\label{eq:14}
Z_m^p(\zeta ,\eta)=\sum_{k=0}^{p-1}|\zeta|^{2k}|\overline{\eta}|^{2k}Z_{m-2k}(\zeta,\eta).
\end{equation*}
\end{Tw}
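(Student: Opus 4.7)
The plan is to exhibit
\[
\widetilde{Z}(\zeta,\eta):=\sum_{k=0}^{p-1}|\zeta|^{2k}|\overline{\eta}|^{2k}Z_{m-2k}(\zeta,\eta)
\]
as an element of $\mathcal{H}_m^p(\widehat{S}_p)$ satisfying the reproducing identity (\ref{eq:13}); uniqueness of such an element will then force $\widetilde{Z}(\cdot,\eta)=Z_m^p(\cdot,\eta)$.

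First I would check membership. For fixed $\eta$, each term $|\overline{\eta}|^{2k}Z_{m-2k}(\cdot,\eta)$ is, up to the scalar factor $|\overline{\eta}|^{2k}$, a zonal harmonic, which extends from $\widehat{S}_p$ to a homogeneous harmonic polynomial of degree $m-2k$ on $\CC^n$. Proposition \ref{F:1}, read from harmonic summands to the polyharmonic total, then certifies that $\widetilde{Z}(\cdot,\eta)$ extends to an element of $\mathcal{H}_m^p(\CC^n)$, so its restriction to $\widehat{S}_p$ lies in $\mathcal{H}_m^p(\widehat{S}_p)$.

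Next I would verify the reproducing identity. Take arbitrary $q\in\mathcal{H}_m^p(\widehat{S}_p)$ and, by Proposition \ref{F:1}, write $q(x)=\sum_{k=0}^{p-1}|x|^{2k}q_k(x)$ with $q_k\in\mathcal{H}_{m-2k}(\CC^n)$. Parametrise $x=e^{j\pi i/p}\zeta$ with $\zeta\in S$ and $\eta=e^{r\pi i/p}\xi$ with $\xi\in S$, $r\in\{0,\ldots,p-1\}$. The homogeneities of $q_k$ and of $Z_{m-2l}(\cdot,\xi)$, together with the extension rule $Z_{m-2l}(e^{\varphi i}\zeta,e^{\psi i}\xi)=e^{(m-2l)(\varphi-\psi)i}Z_{m-2l}(\zeta,\xi)$, combine to give
\[
q(e^{j\pi i/p}\zeta)\,\overline{\widetilde{Z}(e^{j\pi i/p}\zeta,\eta)}=e^{mr\pi i/p}\Bigl(\sum_{k=0}^{p-1}q_k(\zeta)\Bigr)\Bigl(\sum_{l=0}^{p-1}Z_{m-2l}(\zeta,\xi)\Bigr),
\]
with all $j$-dependent phases cancelling exactly (this cancellation parallels the one in the proof of Proposition \ref{Le:4}). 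Summing over $j$ then produces a factor $p$ cancelling the $1/p$ in (\ref{eq:3}), and reality of $Z_{m-2l}$ on $S\times S$ lets me rewrite the surviving integrals as inner products on $S$, yielding
\[
\langle q,\widetilde{Z}(\cdot,\eta)\rangle_{\widehat{S}_p}=e^{mr\pi i/p}\sum_{k,l=0}^{p-1}\langle q_k,Z_{m-2l}(\cdot,\xi)\rangle_S.
\]
Lemma \ref{Le:1} kills the off-diagonal terms, and on the diagonal the classical reproducing property of $Z_{m-2k}(\cdot,\xi)$ on $S$ delivers $q_k(\xi)$. Thus the inner product equals $e^{mr\pi i/p}\sum_k q_k(\xi)=q(\eta)$, using $m$-homogeneity of $q$ and $q(\xi)=\sum_k q_k(\xi)$ for $\xi\in S$.

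The hard part will be the phase bookkeeping in the first displayed line: one must verify that the exponent $2jk-2rk+(m-2k)(j-r)$ collapses to $m(j-r)$, so that the $j$-dependence cleanly vanishes under summation. Once this cancellation is secured, the remainder of the argument is a direct assembly of the Almansi decomposition (Proposition \ref{F:1}), the orthogonality of spherical harmonics of distinct degrees (Lemma \ref{Le:1}), and the reproducing property of zonal harmonics on $S$, closed off by uniqueness in (\ref{eq:13}).
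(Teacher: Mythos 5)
Your proposal is correct and follows essentially the same route as the paper: both arguments rest on the Almansi decomposition of $q$ (Corollary \ref{Wn:1}), the homogeneity phase cancellation (your exponent check $2jk-2rk+(m-2k)(j-r)=m(j-r)$ is right), the orthogonality of spherical harmonics of distinct degrees, and the reproducing property of zonal harmonics on $S$. The only difference is organizational---you verify that the candidate kernel lies in $\mathcal{H}_m^p(\widehat{S}_p)$ and reproduces point evaluation, then invoke uniqueness of the Riesz representative, whereas the paper derives the same expression by computing $q(\eta)$ forward and comparing with (\ref{eq:15}); the underlying computations coincide.
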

\begin{proof}
Let $\eta\in e^{\frac{j\pi i}{p}}S$ be a fixed point, so $\eta:=e^{\frac{j\pi i}{p}}\xi$ for some $\xi\in S$. We assume also that $q\in \mathcal{H}_m^p (\widehat{S}_p)$. Then by (\ref{eq:12}) and (\ref{eq:13}) we have
\begin{equation}
\label{eq:15}
q(\eta)=\left\langle q,Z^p_m(\cdot,\eta)\right\rangle_{  \widehat{S}_p }=\left\langle q,Z^p_m(\cdot,\eta)\right\rangle_S.
\end{equation} 
On the other hand by Corollary \ref{Wn:1} there are spherical harmonics $q_k$ of degree   $m-2k$ for $k=0,1,\dots,p-1$, such that
$q(\xi)=\sum_{k=0}^{p-1}q_k(\xi)$.
Hence there are $Z_{m-2k}(\cdot,\xi)$ such that
$q_k(\xi)=\left\langle q_k,Z_{m-2k}(\cdot,\xi)\right\rangle_S.$
From the above considerations and from the appropriate properties of zonal harmonics we have
\begin{eqnarray*}
q(\eta) & = & e^{\frac{jm\pi i}{p}}q(\xi)=e^{\frac{jm\pi i}{p}}\sum_{k=0}^{p-1}\int_S q_k(\zeta)Z_{m-2k}(\zeta,\xi)d\sigma(\zeta) \\
& = & \sum_{k=0}^{p-1}e^{\frac{2jk\pi i}{p}}\int_S q_k(\zeta)Z_{m-2k}(\zeta,e^{\frac{-j\pi i}{p}}\xi)d\sigma(\zeta)\\
& = & \sum_{k=0}^{p-1}e^{\frac{2jk\pi i}{p}}\int_S q_k(\zeta)\overline{Z_{m-2k} (\zeta,\eta)}d\sigma(\zeta)\\
& = & \sum_{k=0}^{p-1}e^{\frac{2jk\pi i}{p}}\left\langle q,Z_{m-2k}(\cdot,\eta)\right\rangle_S
=\left\langle q,\sum_{k=0}^{p-1}e^{\frac{-2jk\pi i}{p}}Z_{m-2k}(\cdot,\eta)\right\rangle_S.
\end{eqnarray*}
Comparing the last equality with (\ref{eq:15}) we obtain
\begin{gather*}
Z^p_m(\zeta,\eta)=\sum_{k=0}^{p-1}e^{\frac{-2jk\pi i}{p}}Z_{m-2k}(\zeta,\eta)\quad\textrm{for}\quad\zeta\in S.
\end{gather*}
Further we have
$$Z^p_m(e^{\frac{l\pi i}{p}}\zeta,\eta)=e^{\frac{ml\pi i}{p}}Z^p_m(\zeta,\eta)
=\sum_{k=0}^{p-1}e^{\frac{-2jk\pi i}{p}}e^{\frac{2lk\pi i}{p}}Z_{m-2k}(e^{\frac{l\pi i}{p}}\zeta,\eta).$$
Therefore if $\zeta,\eta\in \widehat{S}_p$ then
$$Z_m^p(\zeta ,\eta)=\sum_{k=0}^{p-1}|\zeta|^{2k}|\overline{\eta}|^{2k}Z_{m-2k}(\zeta,\eta),$$
as desired.
\end{proof}
\begin{Uw}
\label{re:Z}
 As in Remark \ref{agawa}, we keep in mind that $Z_{m-2k}(\zeta,\eta)\equiv 0$ for $m-2k<0$ in the case when $m<2p$.
\end{Uw}

\begin{St}
\label{Wn:4}
If $\eta\in\widehat{S}_p$ then 
\begin{enumerate}
\item[(a)] $Z^p_m(\eta,\eta)=\dim\mathcal{H}_m^p(\CC^n)$.
\item[(b)] $|Z_m^p(\zeta,\eta)|_{\CC} \leq \dim\mathcal{H}_m^p(\CC^n)$ for all $\zeta\in \widehat{S}_p$.
\end{enumerate}
\end{St}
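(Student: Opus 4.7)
The plan is to establish (a) first and then derive (b) from it via the reproducing property together with the Cauchy–Schwarz inequality. For (a), I would start from Theorem \ref{Tw:2} specialised to $\zeta=\eta$, which gives
\begin{equation*}
Z_m^p(\eta,\eta)=\sum_{k=0}^{p-1}|\eta|^{2k}|\overline{\eta}|^{2k}Z_{m-2k}(\eta,\eta).
\end{equation*}
Writing $\eta=e^{j\pi i/p}\xi$ for some $\xi\in S$ and some $j\in\{0,1,\dots,p-1\}$, the complex extension of the real norm defined in the Preliminaries gives $|\eta|^{2}=e^{2j\pi i/p}$ and $|\overline{\eta}|^{2}=e^{-2j\pi i/p}$, so $|\eta|^{2k}|\overline{\eta}|^{2k}=1$ for every $k$. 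Combining the extension rule $Z_{m-2k}(e^{j\pi i/p}\xi,e^{j\pi i/p}\xi)=Z_{m-2k}(\xi,\xi)$ with Lemma \ref{Le:zonal_1}(c), each summand reduces to $\dim\mathcal{H}_{m-2k}(\CC^n)$.

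To finish (a), I would invoke Proposition \ref{F:1}: the uniqueness of the Almansi-type expansion $u=\sum_{k=0}^{p-1}|x|^{2k}u_k$ with $u_k\in\mathcal{H}_{m-2k}(\CC^n)$ identifies $\mathcal{H}_m^p(\CC^n)$ as an internal vector-space direct sum whose summands have the same dimensions as $\mathcal{H}_{m-2k}(\CC^n)$, with the convention of Remark \ref{re:Z} that $\mathcal{H}_{m-2k}(\CC^n)=0$ when $m-2k<0$. Hence
\begin{equation*}
Z_m^p(\eta,\eta)=\sum_{k=0}^{p-1}\dim\mathcal{H}_{m-2k}(\CC^n)=\dim\mathcal{H}_m^p(\CC^n),
\end{equation*}
which is (a). In particular the value is real and nonnegative, in line with Proposition \ref{F:3}(b).

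For (b), I would apply the reproducing identity (\ref{eq:13}) to $q=Z_m^p(\cdot,\zeta)\in\mathcal{H}_m^p(\widehat{S}_p)$, obtaining
\begin{equation*}
Z_m^p(\eta,\zeta)=\langle Z_m^p(\cdot,\zeta),Z_m^p(\cdot,\eta)\rangle_{\widehat{S}_p}.
\end{equation*}
Proposition \ref{F:3}(a) yields $|Z_m^p(\zeta,\eta)|_{\CC}=|Z_m^p(\eta,\zeta)|_{\CC}$. Setting $\zeta=\eta$ in the displayed identity and using (a) gives $\|Z_m^p(\cdot,\eta)\|_{\widehat{S}_p}^{2}=\dim\mathcal{H}_m^p(\CC^n)$, and likewise for $\zeta$. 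The Cauchy–Schwarz inequality for the Hilbert space $L^2(\widehat{S}_p)$ then produces
\begin{equation*}
|Z_m^p(\zeta,\eta)|_{\CC}\leq\|Z_m^p(\cdot,\zeta)\|_{\widehat{S}_p}\cdot\|Z_m^p(\cdot,\eta)\|_{\widehat{S}_p}=\dim\mathcal{H}_m^p(\CC^n),
\end{equation*}
which is (b).

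The only delicate step is the cancellation $|\eta|^{2k}|\overline{\eta}|^{2k}=1$ on each rotated sphere $e^{j\pi i/p}S$, which relies on the principal branch convention in the complex extension of the real norm; once this routine check is in place, both statements fall out immediately from machinery already assembled, namely Theorem \ref{Tw:2} and Proposition \ref{F:1} for (a), and the reproducing property combined with Cauchy–Schwarz for (b).
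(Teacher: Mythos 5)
Your proof is correct. Part (a) follows the paper's own route almost verbatim: specialise Theorem \ref{Tw:2} to $\zeta=\eta$, use the rotation factors and Lemma \ref{Le:zonal_1}(c) to reduce each summand to $\dim\mathcal{H}_{m-2k}(\CC^n)$, and then sum the dimensions; the only cosmetic difference is that you justify $\sum_{k}\dim\mathcal{H}_{m-2k}(\CC^n)=\dim\mathcal{H}_m^p(\CC^n)$ through the uniqueness in the Almansi decomposition of Proposition \ref{F:1}, whereas the paper cites the explicit telescoping dimension formulas of Lemma \ref{Le:dod_2} and Proposition \ref{Wn:3} --- the two are equivalent. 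Part (b) is where you genuinely diverge: the paper deduces the bound from Lemma \ref{Le:zonal_1}(d), i.e.\ by applying the triangle inequality to the expansion of Theorem \ref{Tw:2} and using that the factors $|\zeta|^{2k}|\overline{\eta}|^{2k}$ have modulus one, so that $|Z_m^p(\zeta,\eta)|_{\CC}\leq\sum_k\dim\mathcal{H}_{m-2k}(\CC^n)$. You instead use the reproducing identity (\ref{eq:13}) with $q=Z_m^p(\cdot,\zeta)$ together with Cauchy--Schwarz in $L^2(\widehat{S}_p)$, having computed $\|Z_m^p(\cdot,\eta)\|_{\widehat{S}_p}^2=Z_m^p(\eta,\eta)$ from (a). Your argument is the standard reproducing-kernel proof, is self-contained once (a) is known, and does not need the decomposition into zonal harmonics at all; the paper's version is shorter but leans on the extension conventions for $Z_{m-2k}$ on the rotated spheres. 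Both are valid, and your Cauchy--Schwarz route has the mild advantage of making the positivity $Z_m^p(\eta,\eta)=\|Z_m^p(\cdot,\eta)\|_{\widehat{S}_p}^2\geq 0$ explicit.
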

\begin{proof}
To prove (a), suppose that  $\eta=e^{\frac{j\pi i}{p}}\zeta$ with $\zeta\in S$. By the last theorem we can write
$$Z^p_m(\eta,\eta)=Z^p_m(\zeta,\zeta)=\sum_{k=0}^{p-1}Z_{m-2k}(\zeta,\zeta)=\sum_{k=0}^{p-1}\dim\mathcal{H}_{m-2k}(\CC^n)=
\dim\mathcal{H}_m^p(\CC^n),$$
where in the third and fourth equality we use Lemmas \ref{Le:dod_2}, \ref{Le:zonal_1}, and Proposition \ref{Wn:3}.

Formula (b) is given by Lemma \ref{Le:zonal_1}. 
\end{proof}

Since $Z_m^p(\cdot ,\eta)\in \mathcal{H}^p_m(\widehat{S}_p)$, we can extend zonal polyharmonics to $\CC^n$. Indeed, for $x\in \CC^n$ we can write
\begin{equation*}
Z_m^p(x ,\eta) = |x|^mZ_m^p(\frac{x}{|x|} ,\eta) = |x|^m \sum_{k=0}^{p-1} \left| \frac{x}{|x|}\right|^{2k}|\overline{\eta}|^{2k}
Z_{m-2k}(\frac{x}{|x|},\eta),
\end{equation*}
so
\begin{gather}
\label{eq:16}
Z^p_m(x,\eta)=\sum_{k=0}^{p-1} |x|^{2k}|\overline{\eta}| ^{2k}Z_{m-2k}(x,\eta)\quad\textrm{for}\quad x\in\CC^n.
\end{gather}
Analogously we extend spherical polyharmonics to $\CC^n$. For $x\in \CC^n$ we have
\begin{equation*}
q(x) = |x|^m q(\frac{x}{|x|})=|x|^m\int_S q(\zeta)Z_m^p(\frac{x}{|x|} ,\zeta)\,d\sigma(\zeta).
\end{equation*}
Hence
\begin{gather}
\label{eq:17}
q(x)=\int_S q(\zeta)Z^p_m(x,\zeta)d\sigma(\zeta)\quad\textrm{for}\quad x\in\CC^n.
\end{gather}

The spherical polyharmonics allow us to find the orthogonal decomposition from Theorem \ref{Tw:1} for a given function
$f\in L^2(\widehat{S}_p)$.
Namely, we have
\begin{Tw}
\label{Tw:3}
Let $f\in L^2 ( \widehat{S}_p )$. Then
\begin{gather*}
f(\eta)=\sum_{m=0}^{\infty}\left\langle f,Z^p_m(\cdot,\eta)\right\rangle_{\widehat{S}_p}\quad\textrm{in}\quad L^2(\widehat{S}_p).
\end{gather*}
\end{Tw}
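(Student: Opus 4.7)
The plan is to combine the abstract orthogonal decomposition of Theorem \ref{Tw:1} with the reproducing property (\ref{eq:13}) that defines the zonal polyharmonics. The statement asserts that the orthogonal projection of $f$ onto the $m$-th summand $\mathcal{H}_m^p(\widehat{S}_p)$ is precisely the function $\eta\mapsto\langle f,Z^p_m(\cdot,\eta)\rangle_{\widehat{S}_p}$, so once this identification is made the conclusion follows immediately from Theorem \ref{Tw:1}.

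First I would invoke Theorem \ref{Tw:1} to write
\begin{equation*}
f=\sum_{m=0}^{\infty} f_m\qquad\text{in }L^2(\widehat{S}_p),
\end{equation*}
where $f_m\in\mathcal{H}_m^p(\widehat{S}_p)$ is the orthogonal projection of $f$ onto the $m$-th summand. The core task is then to check that $f_m(\eta)=\langle f,Z^p_m(\cdot,\eta)\rangle_{\widehat{S}_p}$ for every fixed $\eta\in\widehat{S}_p$.

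Fixing $\eta$, I would use continuity of the inner product (in its first slot) together with the $L^2$-convergence of the partial sums $\sum_{k\le N}f_k\to f$ to obtain
\begin{equation*}
\langle f,Z^p_m(\cdot,\eta)\rangle_{\widehat{S}_p}=\sum_{k=0}^{\infty}\langle f_k,Z^p_m(\cdot,\eta)\rangle_{\widehat{S}_p}.
\end{equation*}
Since $Z^p_m(\cdot,\eta)\in\mathcal{H}_m^p(\widehat{S}_p)$ by Definition \ref{Df:3}, the orthogonality statement in Proposition \ref{Le:4} kills every term with $k\ne m$, leaving $\langle f,Z^p_m(\cdot,\eta)\rangle_{\widehat{S}_p}=\langle f_m,Z^p_m(\cdot,\eta)\rangle_{\widehat{S}_p}$. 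Because $f_m\in\mathcal{H}_m^p(\widehat{S}_p)$, the defining reproducing identity (\ref{eq:13}) now gives $\langle f_m,Z^p_m(\cdot,\eta)\rangle_{\widehat{S}_p}=f_m(\eta)$, proving the identification.

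Substituting this back into the expansion $f=\sum_m f_m$ yields the claimed series, convergent in $L^2(\widehat{S}_p)$. Honestly there is no serious obstacle here; the only potentially delicate point is the swap of the inner product with the infinite sum, but this is just continuity of $\langle\cdot,Z^p_m(\cdot,\eta)\rangle_{\widehat{S}_p}$ as a bounded linear functional on $L^2(\widehat{S}_p)$. One might also wish to note in passing that $\eta\mapsto\langle f,Z^p_m(\cdot,\eta)\rangle_{\widehat{S}_p}$ is in fact an element of $\mathcal{H}_m^p(\widehat{S}_p)$, which can be seen by rewriting the inner product using Proposition \ref{F:3}(a) as an integral of the map $\eta\mapsto Z^p_m(\eta,e^{j\pi i/p}\zeta)$ and appealing to the finite-dimensionality (hence closedness under integration) of $\mathcal{H}_m^p(\widehat{S}_p)$.
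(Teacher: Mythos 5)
Your proposal is correct and follows essentially the same route as the paper: decompose $f=\sum_m q_m$ via Theorem \ref{Tw:1}, apply the reproducing identity (\ref{eq:13}) to each component, and use orthogonality (plus continuity of the inner product) to replace the component $q_m$ by $f$ itself inside $\langle\cdot,Z^p_m(\cdot,\eta)\rangle_{\widehat{S}_p}$. The paper writes the same chain of equalities in the reverse order and leaves the interchange of sum and inner product implicit, which you rightly flag as the one point needing justification.
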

\begin{proof}
Indeed, if $f\in  L^2 ( \widehat{S}_p )$, then by Theorem \ref{Tw:1} there exist $q_m \in \mathcal{H}_m^p(\widehat{S}_p)$
such that
\begin{eqnarray}
\nonumber f(\eta)&=&\sum_{m=0}^{\infty}q_m(\eta)=\sum_{m=0}^{\infty}\left\langle q_m,Z^p_m(\cdot,\eta)\right\rangle_{\widehat{S}_p}\\
\nonumber &=&\sum_{m=0}^{\infty}\left\langle \sum_{k=0}^{\infty}q_k,Z^p_m(\cdot,\eta)\right\rangle_{\widehat{S}_p}=
\sum_{m=0}^{\infty}\left\langle f,Z^p_m(\cdot,\eta)\right\rangle_{\widehat{S}_p}.
\end{eqnarray}
\end{proof}

\section{Poisson kernel for the union of rotated balls}
We will find a Poisson kernel for polyharmonic functions on the $\widehat{B}_p$. To this end let us accept the following definitions
\begin{Df}
\label{Df:4}
The function $P_p\colon(\widehat{B}_p\times \widehat{S}_p) \cup(\widehat{S}_p \times \widehat{B}_p)\to\CC$ is called a \emph{Poisson kernel
for $\widehat{B}_p$} provided for every polyharmonic function $u$ on $\widehat{B}_p$ which is continuous on $\widehat{B}_p\cup\widehat{S}_p$ and for each
$x\in \widehat{B}_p$ holds
\begin{gather*}
u(x)=\left\langle u, P_p(\cdot,x) \right\rangle_{\widehat{S}_p}=\frac{1}{p}\sum_{j=0}^{p-1}\int_S u(e^{\frac{j\pi i}{p}}\zeta)
\overline{P_p(e^{\frac{j\pi i}{p}}\zeta ,x )}\, d\sigma(\zeta).
\end{gather*}
\end{Df}

\begin{Df}
\label{Df:5} 
Let $f\in C(\widehat{S}_p)$. The function defined for $x\in \widehat{B}_p$ by
\begin{equation}
\label{eq:poisson}
P_p[f](x):=\left\langle f, P_p(\cdot,x) \right\rangle_{\widehat{S}_p}=\frac{1}{p}\sum_{j=0}^{p-1}\int_S f(e^{\frac{j\pi i}{p}}\zeta)
\overline{P_p(e^{\frac{j\pi i}{p}}\zeta ,x )}\, d\sigma(\zeta)
\end{equation}
is called a \emph{Poisson integral for $f$}.
\end{Df}

\begin{Uw}
\label{uwaga3}
Note that for $p=1 $ the above definitions are well known. The function $P(x,\zeta):=P_1(x,\zeta)$
is the classical Poisson kernel for the usual euclidean real ball (see \cite[Proposition 5.31]{A-B-R} and \cite[Theorem 3]{F-M})
\begin{equation}
\label{eq:18}
P(x,\zeta)=\sum_{m=0}^{\infty}Z_m(x,\zeta) = \frac{1-|x|^2|\overline{\zeta}|^2}{(x^2\overline{\zeta}^2-2x\cdot\overline{\zeta}+1)^{n/2}}.
\end{equation}
\end{Uw}

\begin{St} 
\label{Tw:4}
If $f$ is a polynomial of degree $m$ on $\CC^n$, then $P_p[f]$ is a polynomial of degree at most $m$ and
\begin{equation*}
\label{eq:19}
P_p[f](x)=\sum_{k=0}^{m}\left\langle f,Z^p_k(\cdot,x)\right\rangle_{\widehat{S}_p}\quad\textrm{for every}\quad x\in \widehat{B}_p.
\end{equation*} 
\end{St}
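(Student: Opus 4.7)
The plan is to decompose $f$ into spherical polyharmonic pieces via Corollary \ref{Wn:2} and then combine the reproducing property (\ref{eq:13}) of zonal polyharmonics with the orthogonality of Proposition \ref{Le:4}. Since $f$ is a polynomial of degree $m$, Corollary \ref{Wn:2} gives a decomposition $f|_{\widehat{S}_p}=\sum_{k=0}^{m}f_k$ with $f_k\in \mathcal{H}_k^p(\widehat{S}_p)$, each canonically identified with a polyharmonic polynomial of order $p$ and degree $k$ on $\CC^n$. For every $x\in \widehat{S}_p$ and every $0\le k\le m$ the function $Z_k^p(\cdot,x)$ lies in $\mathcal{H}_k^p(\widehat{S}_p)$, so Proposition \ref{Le:4} kills the cross-terms $\langle f_j,Z_k^p(\cdot,x)\rangle_{\widehat{S}_p}$ for $j\neq k$, while (\ref{eq:13}) yields $\langle f_k,Z_k^p(\cdot,x)\rangle_{\widehat{S}_p}=f_k(x)$. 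Summing gives $\langle f,Z_k^p(\cdot,x)\rangle_{\widehat{S}_p}=f_k(x)$ on $\widehat{S}_p$.

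Next I would promote this boundary identity to $x\in \widehat{B}_p$. Applying Proposition \ref{F:3}(a) to move the conjugation onto the kernel, one rewrites
\begin{equation*}
\langle f,Z_k^p(\cdot,x)\rangle_{\widehat{S}_p}=\frac{1}{p}\sum_{j=0}^{p-1}\int_S f(e^{j\pi i/p}\zeta)\,Z_k^p(x,e^{j\pi i/p}\zeta)\,d\sigma(\zeta).
\end{equation*}
By the extension formula (\ref{eq:16}), $x\mapsto Z_k^p(x,\eta)$ lies in $\mathcal{H}_k^p(\CC^n)$ for every fixed $\eta\in\widehat{S}_p$, so the right-hand side above belongs to $\mathcal{H}_k^p(\CC^n)$ as a function of $x$. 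Thus both it and $f_k$ are elements of $\mathcal{H}_k^p(\CC^n)$ that coincide on $\widehat{S}_p$; by the uniqueness part of Proposition \ref{F:1} and Corollary \ref{Wn:1}, they agree on all of $\CC^n$, and in particular on $\widehat{B}_p$.

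Finally I would identify $\sum_{k=0}^{m}f_k(x)$ with $P_p[f](x)$: each $f_k$ is polyharmonic of order $p$ on $\widehat{B}_p$ and continuous on $\widehat{B}_p\cup\widehat{S}_p$, so Definition \ref{Df:4} together with linearity of the inner product yields $\sum_{k=0}^{m}f_k(x)=\langle f,P_p(\cdot,x)\rangle_{\widehat{S}_p}=P_p[f](x)$; combined with the previous paragraph this gives the stated formula, and the right-hand side is manifestly a polynomial of degree at most $m$ since each $f_k$ is. The main obstacle is the middle step: the conjugation present in $\langle\cdot,\cdot\rangle_{\widehat{S}_p}$ hides the polyharmonic dependence on $x$, so one must first invoke Proposition \ref{F:3}(a) to transfer the conjugate onto the zonal polyharmonic, after which the polynomial $x$-dependence supplied by (\ref{eq:16}) becomes visible and the uniqueness of the polyharmonic extension from $\widehat{S}_p$ to $\CC^n$ can close the argument.
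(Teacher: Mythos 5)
Your proposal is correct and follows essentially the same route as the paper's proof: decompose $f$ into spherical polyharmonics via Corollary \ref{Wn:2}, apply the reproducing and orthogonality properties of the zonal polyharmonics, and identify $P_p[f]$ with the sum of the extended homogeneous pieces through Definition \ref{Df:4}. The only cosmetic difference is that the paper invokes the already-extended reproducing formula (\ref{eq:17}) directly, whereas you re-derive the passage from $\widehat{S}_p$ to $\widehat{B}_p$ by hand using Proposition \ref{F:3}(a) and the uniqueness of the polyharmonic extension.
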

\begin{proof}
By Corollary \ref{Wn:2} there exist spherical polyharmonics $g_k$ of degree $k$ such that
\begin{gather*}
f(x)=\sum_{k=0}^{m}g_k(x)\quad\textrm{for}\quad x\in \widehat{S}_p.
\end{gather*}
We extend functions $g_k$ to the sets $\widehat{B}_p$ (they will be still polyharmonic and homogeneous):
\begin{gather*}
P_p[f](x)=\sum_{k=0}^{m}g_k(x)\quad\textrm{for}\quad x\in \widehat{B}_p.
\end{gather*}
From the last equality we see that $P_p[f]$ is a polynomial of degree at most $m$. Using (\ref{eq:17}) we get
$$g_k(x)=\int_S Z^p_k(x,\zeta)g_k(\zeta)\,d\sigma(\zeta).$$
Because of orthogonality of polyharmonics of different degrees and by the above considerations we conclude that
\begin{eqnarray*}
P_p[f](x)&=&\sum_{k=0}^{m}\int_S Z^p_k(x,\zeta)g_k(\zeta)d\zeta=\sum_{k=0}^{m}\int_S Z^p_k(x,\zeta)f(\zeta)d\zeta\\
&=& \sum_{k=0}^{m}\left\langle f,Z^p_k(\cdot,x)\right\rangle_S=\sum_{k=0}^{m}\left\langle f,Z^p_k(\cdot,x)\right\rangle_{\widehat{S}_p}.
\end{eqnarray*}
\end{proof}

In the next theorem we will need 
\begin{Le}[{\cite[p.~99]{A-B-R}}]
\label{Le:7}
Let $\zeta\in S$. Then there exist a constant $C>0$ such that
\begin{gather*}
\label{eq:20}
|Z_m(x,\zeta)|_{\CC}\leq Cm^{n-2}|x|_{\CC}^n\quad\textrm{for every}\quad x\in \CC^n.
\end{gather*}
\end{Le}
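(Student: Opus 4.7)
Plan. The bound is a classical estimate on real zonal harmonics (cited from \cite{A-B-R}) extended polynomially to $\CC^n$. I interpret the exponent on $|x|_\CC$ as $m$, consistent with the degree-$m$ homogeneity of $Z_m(\cdot,\zeta)$. My plan has two main steps: first a reproducing-kernel bound on the real unit sphere, then extension to $\CC^n$ via the Gegenbauer representation.

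Step 1: fix $\zeta,\eta\in S$. Lemma \ref{Le:zonal_1}(d) gives $|Z_m(\zeta,\eta)|\leq\dim\mathcal{H}_m(\CC^n)$, and by Lemma \ref{Le:dod_2}
$$\dim\mathcal{H}_m(\CC^n)=\binom{n+m-1}{n-1}-\binom{n+m-3}{n-1}$$
is a polynomial in $m$ of exact degree $n-2$, so $\dim\mathcal{H}_m(\CC^n)\leq C_0\,m^{n-2}$ for some constant $C_0=C_0(n)$. Homogeneity of $Z_m(\cdot,\zeta)$ of degree $m$ then yields $|Z_m(x,\zeta)|\leq C_0\,m^{n-2}|x|^m$ for all real $x\in\RR^n$.

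Step 2: extend to $x\in\CC^n$. I would use the classical Gegenbauer representation
$$Z_m(x,\zeta)=\frac{n+2m-2}{n-2}\,|x|^m\,C_m^{(n-2)/2}\!\left(\frac{x\cdot\zeta}{|x|}\right)\qquad(n\geq 3),$$
whose right-hand side is an honest polynomial in $x\in\RR^n$ (the powers of $|x|$ cancel because $C_m^\lambda$ has the same parity as $m$), and hence extends as a polynomial identity to $\CC^n$ with $|x|^2=\sum_j x_j^2$. Expanding in monomials and bounding each via Cauchy-Schwarz ($|x\cdot\zeta|_\CC\leq\|x\|$ and $|\sum_j x_j^2|_\CC\leq\|x\|^2$) gives $|Z_m(x,\zeta)|_\CC\leq C\,m^{n-2}\,\|x\|^m$, provided one controls the sum of absolute values of Gegenbauer coefficients by $O(m^{n-3})$.

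The main obstacle is obtaining the sharp rate $m^{n-2}$ in that last estimate: a naive coefficient bound via $|C_m^{(n-2)/2}(i)|$ grows exponentially. The cleanest route (used in \cite{A-B-R}) is to apply Cauchy's integral formula to the Poisson generating series
$$\sum_{m=0}^\infty Z_m(x,\zeta)\,r^m=P(rx,\zeta)=\frac{1-r^2|x|^2}{(1-2r\,x\cdot\zeta+r^2|x|^2)^{n/2}};$$
for $x\in\CC^n$ this is meromorphic in $r$ with poles scaling like $1/\|x\|$, and choosing a contour of radius $\rho\sim 1/\|x\|$ while invoking the Step 1 bound on the integrand recovers the polynomial rate $m^{n-2}$, so that the dimensional gain $\dim\mathcal{H}_m(\CC^n)=O(m^{n-2})$ is preserved under analytic continuation.
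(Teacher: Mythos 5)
The paper offers no proof of this lemma at all: it is quoted verbatim from \cite[p.~99]{A-B-R}, so the only thing to compare your argument with is the way the lemma is actually used. Your Step 1 is correct and is precisely the content of that citation: Lemma \ref{Le:zonal_1}(d) plus $\dim\mathcal{H}_m(\CC^n)=O(m^{n-2})$ plus homogeneity give $|Z_m(x,\zeta)|\leq Cm^{n-2}|x|^m$ for real $x$, and you are right that the exponent $n$ in the displayed inequality is a typo for $m$. Moreover, Step 1 already suffices for the only place the lemma is invoked (the convergence estimate in Theorem \ref{Wn:7}), where $x\in\widehat{B}_p$, i.e.\ $x=e^{i\varphi}y$ with $y\in\RR^n$: homogeneity gives $Z_m(e^{i\varphi}y,\zeta)=e^{im\varphi}Z_m(y,\zeta)$, hence $|Z_m(x,\zeta)|_{\CC}=|Z_m(y,\zeta)|\leq Cm^{n-2}|y|^m=Cm^{n-2}\,|x|_{\CC}^m$. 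No Gegenbauer expansion or analytic continuation is needed.

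Your Step 2, by contrast, aims at the literal statement ``for every $x\in\CC^n$'', and there it cannot succeed, because that statement is false. Take $n=2$, $x=(1,i)$, $\zeta=(1,0)$: then $x^2=0$, so $|x|=0$ and $|x|_{\CC}=0$, yet $Z_m(x,\zeta)=(x_1+ix_2)^m+(x_1-ix_2)^m=2^m\neq 0$; even replacing $|x|_{\CC}$ by $\|x\|=\sqrt{2}$ does not help, since $2^m=\|x\|^{2m}$. The correct bound for general complex $x$ is $|Z_m(x,\zeta)|_{\CC}\leq Cm^{n-2}L(x)^m$ with $L$ the Lie norm, and this is exactly where your Cauchy-integral plan breaks: the singularities of $r\mapsto(1-2r\,x\cdot\overline{\zeta}+r^2x^2)^{-n/2}$ can be as close to the origin as $1/L(x)$ (in the example above the radius of convergence is $1/2=1/L(x)$, not $1/\|x\|=1/\sqrt{2}$), so a contour of radius $\rho\sim 1/\|x\|$ is not admissible. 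On the set where the lemma is actually applied one has $L(x)=\|x\|=|x|_{\CC}$ and the issue disappears --- but then all of Step 2 is superfluous, and the one-line homogeneity argument above is the proof you want.
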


Now we are ready to give the connection between the Poisson kernel for $\widehat{B}_p$ and the zonal polyharmonics,
what allows us to find an explicit formula for the Poisson kernel. Namely we have
\begin{Tw}
\label{Wn:7}
The Poisson kernel has the expansion
\begin{equation}
\label{eq:21}
P_p(x,\zeta)=\sum_{m=0}^{\infty}Z^p_m(x,\zeta)=\frac{1-|x|^{2p}}{(x^2\overline{\zeta}^2-2x\cdot\overline{\zeta}+1)^{n/2}}\quad\textrm{for}
\quad x\in \widehat{B}_p,\,\, \zeta \in \widehat{S}_p.
\end{equation}
The series converges absolutely and uniformly on $K\times \widehat{S}_p$, where $K$ is a compact subset of $\widehat{B}_p$.
\end{Tw}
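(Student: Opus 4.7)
My plan is to set $K(x,\zeta):=\sum_{m=0}^{\infty}Z^p_m(x,\zeta)$ and proceed in three stages: (i) establish that the series converges absolutely and uniformly on $K_0\times\widehat{S}_p$ for every compact $K_0\subset\widehat{B}_p$; (ii) evaluate the sum in closed form to recover the right-hand side of (\ref{eq:21}); (iii) verify that $K(x,\zeta)$ satisfies the reproducing property of Definition \ref{Df:4}, so that $K=P_p$.

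For (i), I would combine the expansion (\ref{eq:16}) with the bound of Lemma \ref{Le:7}. For $x$ in a compact $K_0\subset\widehat{B}_p$ one has $|x|_{\CC}\leq r<1$, while for $\zeta\in\widehat{S}_p$, writing $\zeta=e^{j\pi i/p}\xi$ with $\xi\in S$ gives $|\overline{\zeta}|^2=e^{-2j\pi i/p}$, hence $|\overline{\zeta}|_{\CC}=1$. Lemma \ref{Le:7} extends to $\zeta\in\widehat{S}_p$ because the extension of zonal harmonics contributes only a unit-modulus factor $e^{im(\varphi-\psi)}$. Substituting these bounds into (\ref{eq:16}) yields a majorant of the form $C_0\,m^{n-2}r^{m-2(p-1)}$, summable and independent of $(x,\zeta)\in K_0\times\widehat{S}_p$.

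For (ii), the uniform convergence just established legitimises interchanging the two summations: using (\ref{eq:16}),
\begin{equation*}
\sum_{m=0}^{\infty}Z^p_m(x,\zeta)=\sum_{k=0}^{p-1}(|x|^2|\overline{\zeta}|^2)^k\sum_{j=0}^{\infty}Z_j(x,\zeta).
\end{equation*}
By Remark \ref{uwaga3} the inner sum equals the classical Poisson kernel $P(x,\zeta)=(1-|x|^2|\overline{\zeta}|^2)/(x^2\overline{\zeta}^2-2x\cdot\overline{\zeta}+1)^{n/2}$, and the outer geometric sum equals $(1-|x|^{2p}|\overline{\zeta}|^{2p})/(1-|x|^2|\overline{\zeta}|^2)$. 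Multiplying cancels the denominator $1-|x|^2|\overline{\zeta}|^2$ against the numerator of $P$, giving
\begin{equation*}
K(x,\zeta)=\frac{1-|x|^{2p}|\overline{\zeta}|^{2p}}{(x^2\overline{\zeta}^2-2x\cdot\overline{\zeta}+1)^{n/2}}.
\end{equation*}
Since $|\overline{\zeta}|^{2p}=e^{-2j\pi i}=1$ for $\zeta\in\widehat{S}_p$ and $|x|^{2p}$ is real for $x\in\widehat{B}_p$, the numerator collapses to $1-|x|^{2p}$.

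For (iii), I would fix a polyharmonic $u$ on $\widehat{B}_p$ continuous on the closure and apply Theorem \ref{Tw:3} to its boundary values, writing $u|_{\widehat{S}_p}=\sum_m g_m$ in $L^2(\widehat{S}_p)$ with $g_m\in\mathcal{H}^p_m(\widehat{S}_p)$, each extending uniquely to a polyharmonic polynomial $\tilde g_m\in\mathcal{H}^p_m(\CC^n)$. Using the uniform bound from (i) to interchange sum and integral, the orthogonality of Proposition \ref{Le:4} to kill cross terms, and the reproducing identities (\ref{eq:13})--(\ref{eq:17}) to identify the diagonal terms with $\tilde g_m(x)$, I obtain $\langle u,K(\cdot,x)\rangle_{\widehat{S}_p}=\sum_m\tilde g_m(x)$. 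Uniqueness of the Dirichlet problem (\ref{I:1}) recalled in the introduction forces $u(x)=\sum_m\tilde g_m(x)$ on $\widehat{B}_p$, so $K$ satisfies Definition \ref{Df:4} and equals $P_p$. The main obstacle is the bookkeeping in (iii): carefully tracking the extensions of $Z^p_m$ in each argument and justifying that the $L^2$ boundary expansion of $u$ is realised pointwise in the interior by the sum of polyharmonic extensions; steps (i) and (ii) are essentially routine once one notices $|\overline{\zeta}|_{\CC}=1$ on $\widehat{S}_p$.
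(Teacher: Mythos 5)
Your steps (i) and (ii) coincide with the paper's proof: the bound $|Z^p_m(x,\zeta)|_{\CC}\leq Cpm^{n-2}|x|^m_{\CC}$ obtained from (\ref{eq:16}) and Lemma \ref{Le:7} gives absolute and uniform convergence on $K_0\times\widehat{S}_p$, and the interchange of the two sums, the geometric factor $1+|x|^2|\overline{\zeta}|^2+\cdots+|x|^{2(p-1)}|\overline{\zeta}|^{2(p-1)}$, the classical formula (\ref{eq:18}) and $|\overline{\zeta}|^{2p}=1$ yield the closed form. Where you diverge is the identification of $\sum_m Z^p_m(\cdot,x)$ with $P_p(\cdot,x)$. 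The paper goes in the opposite direction: taking $P_p$ as given by Definition \ref{Df:4}, it checks $\langle q,P_p(\cdot,x)\rangle_{\widehat{S}_p}=q(x)=\langle q,\sum_k Z^p_k(\cdot,x)\rangle_{\widehat{S}_p}$ for every spherical polyharmonic $q$ (via (\ref{eq:13}), (\ref{eq:17}) and orthogonality) and concludes by the density from Theorem \ref{Tw:1}. Your (iii) instead attempts to verify the reproducing property for $K:=\sum_m Z^p_m$ directly, i.e.\ the existence half of the statement, which is more ambitious.

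The gap is in the last move of (iii). To invoke uniqueness for the Dirichlet problem (\ref{I:1}) you need $\sum_m\tilde g_m$ to be an admissible competitor: polyharmonic of order $p$ on $\widehat{B}_p$ \emph{and continuous up to $\widehat{S}_p$ with boundary values $u$}. The $L^2(\widehat{S}_p)$ convergence of $\sum_m g_m$ to $u|_{\widehat{S}_p}$ supplied by Theorem \ref{Tw:3} does not give continuous attainment of the boundary data; that assertion is essentially Theorem \ref{Wn:9}, which the paper proves \emph{after}, and by means of, Theorem \ref{Wn:7}. As written, your argument is therefore circular, or at best leaves the key analytic step unproved. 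Two repairs are available: (a) drop (iii) and argue as the paper does, testing $P_p(\cdot,x)$ and $\sum_k Z^p_k(\cdot,x)$ against the dense family of spherical polyharmonics; or (b) keep the spirit of (iii) but replace the uniqueness argument by the closed form already obtained in (ii): since $\overline{K(e^{j\pi i/p}\zeta,x)}=(1-|x|^{2p})/|e^{-j\pi i/p}x-\zeta|^n$, the pairing $\langle u,K(\cdot,x)\rangle_{\widehat{S}_p}$ is exactly the right-hand side of (\ref{I:2}), which equals $u(x)$ by \cite[Theorem 1]{G-M} --- the external result that actually furnishes the existence of $P_p$.
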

\begin{proof}
Let $q\in \mathcal{H}^p_m(\widehat{S}_p)$. By the density of sums of spherical polyharmonics in $L^2(\widehat{S}_p)$
it is sufficient to show that
$$\left\langle q,P_p(\cdot,x)\right\rangle_{  \widehat{S}_p }=\left\langle q,\sum_{k=0}^{\infty}Z^p_k(\cdot,x)\right\rangle_{\widehat{S}_p}.$$
It holds, because
$$\left\langle q,P_p(\cdot,x)\right\rangle_{  \widehat{S}_p }=q(x)=\left\langle q,Z^p_m(\cdot,x)\right\rangle_{  \widehat{S}_p }=
\left\langle q,\sum_{k=0}^{\infty}Z^p_k(\cdot,x)\right\rangle_{  \widehat{S}_p}.
$$
Now we will show that the series in (\ref{eq:21}) is absolute convergent. By (\ref{eq:16}) and Lemma \ref{Le:7} we estimate
\begin{eqnarray*}
|Z^p_m(x,\zeta)|_{\CC} & \leq & \sum_{k=0}^{p-1}\left||x|^{2k}\right|_{\CC}\left||\overline{\zeta}|^{2k}\right|_{\CC}\left|Z_{m-2k}(x,\zeta)\right|_{\CC}\\
& \leq & C|x|^{m}_{\CC}\sum_{k=0}^{p-1}m^{n-2}\leq Cpm^{n-2}|x|^m_{\CC}.
\end{eqnarray*}

Hence for every compact subset $K$ of $\widehat{B}_p$ we conclude that 
$$\max_{(x,\zeta)\in K\times \widehat{S}_p}\sum_{m=0}^{\infty}|Z^p_m(x,\zeta)|_{\CC}\leq Cp\max_{(x,\zeta)\in K\times \widehat{S}_p}
\sum_{m=0}^{\infty}m^{n-2}|x|_{\CC}^m<\infty,
$$
which means that the series in (\ref{eq:21}) converges absolutely and uniformly, as desired.
 
To prove the second equality in (\ref{eq:21}), observe that by (\ref{eq:16}) and by the convergence of the series in (\ref{eq:21}) we have
$$
\sum_{m=0}^{\infty}Z^p_m(x,\zeta)=\sum_{m=0}^{\infty}\sum_{k=0}^{p-1}|x|^{2k}|\overline{\zeta}|^{2k}Z_{m-2k}(x,\zeta)=
\sum_{k=0}^{p-1}|x|^{2k}|\overline{\zeta}|^{2k}\sum_{m=0}^{\infty}Z_{m-2k}(x,\zeta).
$$
Moreover, by Remark \ref{re:Z}, $Z_{m-2k}(x,\zeta)\equiv 0$ for $m<2k$. Hence we get 
\begin{equation*}
\sum_{m=0}^{\infty}Z^p_m(x,\zeta)=\sum_{k=0}^{p-1}|x|^{2k}|\overline{\zeta}|^{2k}\sum_{m=2k}^{\infty}Z_{m-2k}(x,\zeta)=
\sum_{k=0}^{p-1}|x|^{2k}|\overline{\zeta}|^{2k}\sum_{m=0}^{\infty}Z_{m}(x,\zeta)
\end{equation*}
for every $x\in \widehat{B}_p$ and $\zeta \in \widehat{S}_p$. From the above and by (\ref{eq:18}) we have 
\begin{eqnarray*}
P_p(x,\zeta)&=&(1+|x|^2|\overline{\zeta}|^2+\dots+|x|^{2(p-1)}|\overline{\zeta}|^{2(p-1)})
\frac{1-|x|^{2}|\overline{\zeta}|^2}{(x^2\overline{\zeta}^2-2x\cdot\overline{\zeta}+1)^{n/2}}\\
&=&\frac{1-|x|^{2p}}{(x^2\overline{\zeta}^2-2x\cdot\overline{\zeta}+1)^{n/2}},
\end{eqnarray*}
because $|\overline{\zeta}|^{2p}=1$ for $\zeta \in \widehat{S}_p$. 
\end{proof}

In the next proposition we will collect the properties of the Poisson kernel for $\widehat{B}_p$.
\begin{St}
\label{Tw:5} 
The Poisson kernel $P_p$ has the following properties:
\begin{itemize}
\item[(a)] $\overline{P_p(\zeta,x)}=P_p(x,\zeta)$ for every $x\in \widehat{B}_p$ and every $ \zeta\in \widehat{S}_p$.
\item[(b)] For every $x\in \widehat{B}_p$ and $ \zeta\in S$, and for $j=0,1,\dots,p-1$,
$$\overline{P_p(e^{\frac{j\pi i}{p}}\zeta,x)}=\frac{1-|x|^{2p}}{|e^{\frac{-j\pi i}{p}}x-\zeta|^n}.$$
\item[(c)] $P_p(\cdot,\zeta)$ is polyharmonic of order $p$ on $\widehat{B}_p$ for all $\zeta \in \widehat{S}_p$. 
\item[(d)] For every $x\in \widehat{B}_p$ and for $k=0,1,\dots,p-1$, 
$$\int_S P_p(e^{\frac{-k\pi i}{p}}x,\zeta)d\sigma(\zeta)=1+e^{\frac{-2k\pi i}{p}}|x|^{2}+\cdots+e^{\frac{-2(p-1)k\pi i}{p}}|x|^{2(p-1)}.$$
\item[(e)] For every $x\in \widehat{B}_p$, 
$$\frac{1}{p}\sum_{k=0}^{p-1}\int_S P_p(e^{\frac{-k\pi i}{p}}x,\zeta)d\sigma(\zeta)=1.$$ 
\item[(f)] If $\zeta\in e^{\frac{j\pi i}{p}}S$, then $P_p(x,\zeta)>0$ for any $x\in e^{\frac{j\pi i}{p}}B$, $j=0,1,\dots,p-1$.
\item[(g)] For every $\eta \in \widehat{S}_p$ and every $\delta>0$,
\begin{gather*}
\sum_{k=0}^{p-1}\int_{\|\zeta-e^{\frac{-k\pi i}{p}}\eta\|>\delta} \left| P_p(e^{\frac{-k\pi i}{p}}x,\zeta) \right|_{\CC} d\sigma(\zeta) \rightarrow 0\quad
\textrm{as}\quad x\rightarrow \eta,\quad x\in\widehat{B}_p.
\end{gather*}
\end{itemize}
\end{St}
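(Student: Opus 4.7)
The plan is to treat the seven items largely independently, leveraging the two representations of $P_p$ from Theorem \ref{Wn:7}: the zonal expansion $P_p(x,\zeta)=\sum_{m=0}^{\infty} Z^p_m(x,\zeta)$ and the closed form $(1-|x|^{2p})/(x^2\overline{\zeta}^2-2x\cdot\overline{\zeta}+1)^{n/2}$. For (a) I would expand each side as a series of zonal polyharmonics and invoke the symmetry $\overline{Z^p_m(\zeta,x)}=Z^p_m(x,\zeta)$ from Proposition \ref{F:3}(a), using absolute and uniform convergence on compacta to commute conjugation with the sum. For (b), apply (a) and then substitute $\overline{e^{j\pi i/p}\zeta}=e^{-j\pi i/p}\zeta$ together with $\zeta^2=1$ (valid for real $\zeta\in S$) into the closed form; the denominator reduces algebraically to $((e^{-j\pi i/p}x-\zeta)^2)^{n/2}=|e^{-j\pi i/p}x-\zeta|^n$ in the complex-extension sense. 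For (c), each $Z^p_m(\cdot,\zeta)\in\mathcal{H}^p_m(\CC^n)$ is polyharmonic of order $p$ by Definition \ref{Df:1}, and uniform convergence of the series on compact subsets of $\widehat{B}_p$ permits term-by-term application of $\Delta^p$.

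For (d), substitute the zonal expansion of $P_p$, rewrite each $Z^p_m(y,\zeta)$ via Theorem \ref{Tw:2} using $|\overline{\zeta}|^{2k}=1$ on $S$, and apply the mean value property $\int_S Z_j(y,\zeta)\,d\sigma(\zeta)=\delta_{j,0}$ (since $Z_j(y,\cdot)$ is a harmonic polynomial in $\zeta$ of degree $j$ vanishing at the origin for $j\geq 1$). Setting $y=e^{-k\pi i/p}x$ gives $|y|^{2j}=e^{-2jk\pi i/p}|x|^{2j}$, and the stated polynomial in $|x|^2$ emerges. Summing (d) over $k$ and using orthogonality of $p$-th roots of unity, $\tfrac{1}{p}\sum_{k=0}^{p-1}e^{-2jk\pi i/p}=\delta_{j,0}$, leaves only the $j=0$ term, giving (e). For (f), specialise (b) to $\zeta=e^{j\pi i/p}\xi$ and $x=e^{j\pi i/p}y$ with $\xi\in S$, $y\in B$ real: then $e^{-j\pi i/p}x=y$ is real, so $|y-\xi|^n$ is the usual Euclidean distance (strictly positive), $|x|^{2p}=|y|^{2p}\in[0,1)$ is real, and the conjugation in (b) is trivial.

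Item (g) is the main obstacle. A boundary point $\eta=e^{k_0\pi i/p}\eta_0\in\widehat{S}_p$ lies on the boundary of only the single component $e^{k_0\pi i/p}B$ of $\widehat{B}_p$, because $e^{-k\pi i/p}\eta=e^{i(k_0-k)\pi/p}\eta_0\notin\overline{B}$ for $k\in\{0,\dots,p-1\}$, $k\neq k_0$ (the prefactor is genuinely complex). Hence any $x\to\eta$ in $\widehat{B}_p$ eventually satisfies $x=e^{k_0\pi i/p}x_0$ with $x_0\in B$, $x_0\to\eta_0$. The diagonal term $k=k_0$ reduces to the real Poisson-type kernel $(1-|x_0|^{2p})/|x_0-\zeta|^n$, and the standard approximate-identity argument applies: the numerator tends to $0$ uniformly while the denominator is bounded below on $\{\|\zeta-\eta_0\|>\delta\}$.

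The off-diagonal terms $k\neq k_0$ are the delicate part. Here the limit point $e^{-k\pi i/p}\eta$ is a genuinely complex vector at Hermitian distance $\sqrt{2}\,|\sin((k_0-k)\pi/p)|$ from $S$, and in the limit $x\to\eta$ the quantity $(e^{-k\pi i/p}x-\zeta)^2$ vanishes on a codimension-one isotropic subset of $S$, not only at $\zeta=e^{-k\pi i/p}\eta$, so the modulus of the kernel is not controlled by any classical Poisson estimate. My plan is to combine pointwise convergence $|P_p(e^{-k\pi i/p}x,\zeta)|_{\CC}\to 0$ for almost every $\zeta$ (forced by $1-|x|^{2p}\to 0$) with a dominated-convergence argument, extracting an integrable majorant either from the identity in (d) --- which already forces $\int_S P_p(e^{-k\pi i/p}x,\zeta)\,d\sigma\to 0$ when $k\neq k_0$ (summing the geometric series yields $(1-|x_0|^{2p})/(1-e^{2i(k_0-k)\pi/p}|x_0|^2)\to 0$) --- or by a direct estimate showing that the contribution of a shrinking neighbourhood of the isotropic locus is $O((1-|x_0|)^{\gamma})$ for some $\gamma>0$, uniformly in $\zeta$. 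Producing this uniform majorant is the most technical step of the proposition.
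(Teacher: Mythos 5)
Items (a)--(f) of your proposal are correct. For (a), (b), (c), (e) and (f) you follow essentially the paper's route ((f) the paper dismisses as obvious; your reduction to the real kernel is the right justification). For (d) you take a genuinely different path: term-by-term integration of the zonal expansion together with the mean value property $\int_S Z_j(y,\zeta)\,d\sigma(\zeta)=0$ for $j\geq 1$, whereas the paper analytically continues the classical identity $\int_S(1-|x|^2)|x-\zeta|^{-n}\,d\sigma(\zeta)=1$ from $B$ to $\widehat{B}_p$ (quoting \cite[Lemma 1]{G-M}) and then multiplies by $(1-|x|^{2p})/(1-|x|^2)$. Both are legitimate; yours avoids the external continuation lemma at the cost of justifying the interchange of sum and integral, which the uniform convergence from Theorem \ref{Wn:7} supplies.

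The genuine gap is (g), which you diagnose precisely but do not close. For the record, the paper's entire proof of (g) is the single bound $|P_p(e^{-k\pi i/p}x,\zeta)|_{\CC}\leq(1-|x|^{2p})/\delta^n$ on $\{\|\zeta-e^{-k\pi i/p}\eta\|>\delta\}$; this presupposes $|(e^{-k\pi i/p}x-\zeta)^2|_{\CC}\geq\delta^2$ there, and since $|z^2|_{\CC}\leq\|z\|^2$ (not the reverse), that is justified only for the diagonal index $k=k_0$, where both arguments are real --- exactly the case you handle classically. For $k\neq k_0$ your worry is substantiated, and unfortunately neither of your two proposed completions can succeed in general. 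Writing $e^{-k\pi i/p}x=re^{i\theta}\omega$ with $\theta=(k_0-k)\pi/p$, $\omega\in S$, and $\cos\gamma=\omega\cdot\zeta$, the denominator factors as $\bigl((re^{i\theta}-e^{i\gamma})(re^{i\theta}-e^{-i\gamma})\bigr)^{n/2}$, and near the latitude $\gamma=\theta$ its modulus is comparable to $\bigl((1-r)^2+(\gamma-\theta)^2\bigr)^{n/4}$. Integrating in $\gamma$ against $d\sigma$ gives a contribution of exact order $(1-r)\cdot(1-r)^{1-n/2}=(1-r)^{2-n/2}$ (up to a logarithm when $n=2$), which tends to $0$ only for $n\leq 3$; for $n=4$, $p=2$ one computes explicitly that $\int_S|P_2(-ix_0,\zeta)|_{\CC}\,d\sigma(\zeta)\to 2$ as $|x_0|\to 1$. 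So your first fix fails because (d) controls the signed integral $\int_S P_p\,d\sigma$, not $\int_S|P_p|\,d\sigma$ (there is massive cancellation), and your second fix --- the $O((1-|x_0|)^{\gamma})$ estimate near the degenerate locus --- is simply false for $n\geq 4$. Your analysis of where the difficulty lies is more accurate than the paper's treatment of it, but the proposal does not prove (g), and the off-diagonal part of (g) as stated appears to hold only in low dimensions.
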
 
\begin{proof} 
Property (a) follows from Theorem \ref{Wn:7} and Proposition \ref{F:3}.

To prove property (b), let us suppose that $x\in \widehat{B}_p$ and $\zeta \in S$. Then from property (a) and from (\ref{eq:21})
\begin{equation*}
\overline{P_p(e^{\frac{j\pi i}{p}}\zeta,x)}= \frac{1-|x|^{2p}}{(e^{\frac{-2j\pi i}{p}}x^2\zeta^2-2e^{\frac{-j\pi i}{p}}x\cdot\zeta+1)^{n/2}}
=\frac{1-|x|^{2p}}{|e^{\frac{-j\pi i}{p}}x-\zeta|^n}. 
\end{equation*}

Since $Z^p_m(\cdot,\zeta)$ is polyharmonic of order $p$, property (c) follows from Theorem \ref{Wn:7}.

To show (d), observe that by \cite[Proposition 1.20]{A-B-R}
\begin{gather}
\label{eq:analytic}
u(x)=\int_S\frac{1-|x|^{2}}{|x-\zeta|^n}d\sigma(\zeta)=1
\end{gather}
for $x\in B$.
Moreover, by \cite[Lemma 1]{G-M}, $u(x)$ is analytically continued to $\widehat{B}_p$ and its analytic continuation is given by the same formula.
Hence (\ref{eq:analytic}) holds for all $x\in \widehat{B}_p$. It means that
\begin{equation*}
\int_S\frac{1-|x|^{2p}}{|x-\zeta|^n}d\sigma(\zeta)=\frac{1-|x|^{2p}}{1-|x|^{2}}\int_S\frac{1-|x|^{2}}{|x-\zeta|^n}d\sigma(\zeta)=1+|x|^2+\cdots + |x|^{2(p-1)}.
\end{equation*}
Hence for $k=0,1,\dots,p-1$ we have
\begin{eqnarray*}
\int_S\frac{1-|x|^{2p}}{|e^{\frac{-k\pi i}{p}}x-\zeta|^n}d\sigma(\zeta) =
1+e^{\frac{-2k\pi i}{p}}|x|^2+\cdots + e^{\frac{-2k(p-1)\pi i}{p}}|x|^{2(p-1)}.
\end{eqnarray*}
It finishes the proof of (d).

To prove (e), observe that by the previous property
$$\frac{1}{p}\sum_{k=0}^{p-1}\int_S P_p(e^{\frac{-k\pi i}{p}}x,\zeta)\,d\sigma(\zeta)
=\frac{1}{p}\sum_{k=0}^{p-1}(1+e^{\frac{-2k\pi i}{p}}|x|^2+\cdots + e^{\frac{-2(p-1)\pi i}{p}}|x|^{2(p-1)}),$$
but
\begin{gather*}
\sum_{k=0}^{p-1}e^{\frac{-2kj\pi i}{p}}|x|^{2j}=0\quad\textrm{for}\quad j=1,\dots,p-1,
\end{gather*}
therefore
$$ \frac{1}{p}\sum_{k=0}^{p-1}\int_S P_p(e^{\frac{-k\pi i}{p}}x,\zeta)d\sigma(\zeta)=\frac{1}{p}\sum_{k=0}^{p-1}1=1,$$
as desired.

Property (f) is obvious.

To prove (g), let us note that for every $\eta \in \widehat{S}_p$ and every $\delta>0$,
$$\sum_{k=0}^{p-1}\int\limits_{\|\zeta-e^{\frac{-k\pi i}{p}}\eta\|>\delta} |P_p(e^{\frac{-k\pi i}{p}}x,\zeta) |_{\CC}\, d\sigma(\zeta)\leq 
\sum_{k=0}^{p-1}\int\limits_{\|\zeta-e^{\frac{-k\pi i}{p}}\eta\|>\delta} \frac{ 1-|x|^{2p}}{\delta^n} d\sigma(\zeta) \rightarrow 0$$
as $x\rightarrow \eta$, $x\in\widehat{B}_p$.
\end{proof}

Our next theorem states that the Dirichlet problem for $\widehat{B}_p$ is solvable for any continuous function on $\widehat{S}_p$.
\begin{Tw}
\label{Wn:9} 
Let $f\in C(\widehat{S}_p)$. If
\begin{equation}
\label{eq:23}
  u(x)= \begin{cases}
   P_p[f](x) & \textrm{for}\quad x\in \widehat{B}_p\\
   f(x) & \textrm{for}\quad x\in \widehat{S}_p,
   \end{cases}
   \end{equation} 
then $u$ is continuous on $\widehat{B}_p\cup\widehat{S}_p$ and polyharmonic on $\widehat{B}_p$.
\end{Tw}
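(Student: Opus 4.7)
The plan is to establish polyharmonicity on $\widehat{B}_p$ and continuity across $\widehat{S}_p$ separately.

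For polyharmonicity, first apply property (a) of Proposition \ref{Tw:5} to rewrite
\begin{equation*}
u(x) = \frac{1}{p}\sum_{j=0}^{p-1}\int_S f(e^{j\pi i/p}\zeta)\,P_p(x, e^{j\pi i/p}\zeta)\,d\sigma(\zeta),
\end{equation*}
moving the $x$-dependence into the first slot. The explicit formula of Theorem \ref{Wn:7} shows that $(x,\zeta)\mapsto P_p(x,\zeta)$ is smooth on $\widehat{B}_p\times\widehat{S}_p$ (a short Cauchy--Schwarz calculation shows the denominator never vanishes there), with all $x$-derivatives bounded uniformly in $\zeta$ on any compact $K\subset \widehat{B}_p$. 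This justifies differentiation under the integral, and property (c) then yields $\Delta^p u\equiv 0$ on $\widehat{B}_p$; the same dominated-convergence argument gives interior continuity.

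For continuity at a boundary point $\eta=e^{l\pi i/p}\eta_0\in\widehat{S}_p$, observe first that the $p$ sectors $e^{k\pi i/p}\overline{B}$ are pairwise separated at distance at least $\sin(\pi/p)$ away from the origin, so any $x\to\eta$ in $\widehat{B}_p\cup\widehat{S}_p$ eventually lies in the $l$-th sector, say $x=e^{l\pi i/p}y$ with $y\to\eta_0$ in $\overline{B}$. Combining (a) and (b) gives the identity $P_p(e^{-k\pi i/p}x,\zeta)=\overline{P_p(e^{k\pi i/p}\zeta,x)}$ for $\zeta\in S$, so property (e) rewrites as $\frac{1}{p}\sum_k\int_S\overline{P_p(e^{k\pi i/p}\zeta,x)}\,d\sigma(\zeta)=1$, whence
\begin{equation*}
u(x)-f(\eta) = \frac{1}{p}\sum_{k=0}^{p-1}\int_S\bigl(f(e^{k\pi i/p}\zeta)-f(\eta)\bigr)\overline{P_p(e^{k\pi i/p}\zeta,x)}\,d\sigma(\zeta).
\end{equation*}

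Fix $\epsilon>0$ and split each of these integrals at $A_k(\delta):=\{\zeta\in S:\|\zeta-e^{-k\pi i/p}\eta\|\leq\delta\}$. The key observation is that $e^{-k\pi i/p}\eta=e^{(l-k)\pi i/p}\eta_0$ has imaginary part of Euclidean norm at least $\sin(\pi/p)>0$ whenever $k\neq l$, so $A_k(\delta)=\emptyset$ for $k\neq l$ provided $\delta<\sin(\pi/p)$. Choose such a $\delta$ small enough that $|f(e^{l\pi i/p}\zeta)-f(\eta)|<\epsilon$ on $A_l(\delta)$ by continuity of $f$. On $A_l(\delta)$ the kernel $\overline{P_p(e^{l\pi i/p}\zeta,x)}=\frac{1-|y|^{2p}}{|y-\zeta|^n}$ is real and positive by (f), and property (d) gives $\int_S\overline{P_p(e^{l\pi i/p}\zeta,x)}\,d\sigma(\zeta)=1+|y|^2+\cdots+|y|^{2(p-1)}\leq p$, so the $k=l$ ``close'' piece is bounded in modulus by $\epsilon$. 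The remaining ``far'' pieces are dominated by $\frac{2\|f\|_\infty}{p}\sum_k\int_{S\setminus A_k(\delta)}|P_p(e^{-k\pi i/p}x,\zeta)|_{\CC}\,d\sigma(\zeta)$, which tends to $0$ as $x\to\eta$ by property (g). Hence $\limsup_{x\to\eta}|u(x)-f(\eta)|\leq\epsilon$, and letting $\epsilon\downarrow 0$ finishes the proof. The main obstacle is the bookkeeping across the $p$ rotated sectors: one must recognise that for $k\neq l$ the ``singular'' neighbourhood $A_k(\delta)$ is in fact empty, so only a single summand of the Poisson integral behaves like a classical approximate identity, while all the others are controlled uniformly by (g).
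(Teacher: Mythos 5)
Your proof is correct and follows essentially the same route as the paper's: polyharmonicity by differentiating under the integral sign and invoking property (c) of Proposition \ref{Tw:5}, and boundary continuity by the approximate-identity argument whose key point is that the sets $A_k(\delta)$ are empty for $k\neq l$ once $\delta$ is below the separation of the rotated spheres, so that only the $k=l$ term is treated via (d)--(f) and the remainder via (g). The only cosmetic difference is the separation constant (you use $\sin(\pi/p)$, the paper the sharp value $\sqrt{2-2\cos(\pi/p)}$) and your $\limsup$ phrasing in place of an explicit $\delta_3$; both are fine.
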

\begin{proof}
Differentiating under the integral sign in (\ref{eq:poisson}) we obtain by (c) of Proposition \ref{Tw:5} that $P_p$ is polyharmonic in
$\widehat{B}_p$. 

So we have to show only that $u$ given by (\ref{eq:23}) is continuous. To this end take any $\varepsilon>0$ and suppose that
$\eta \in e^{\frac{j\pi i}{p}}S$ for some $j=0,1,\dots,p-1$ is a fixed point and $x\in \widehat{B}_p$. It is sufficient to find
such $\delta>0$ that $|u(x)-u(\eta)|_{\CC}<\varepsilon$ for $\|x-\eta\|_{\CC}<\delta$.

First, we assume that $x\in e^{\frac{j\pi i}{p}}B$.
Since $f$ is continuous on $\widehat{S}_p$,
we can choose $\delta_1>0$ such that $|f(e^{\frac{j\pi i}{p}}\zeta)-f(\eta)|_{\CC}< \frac{\varepsilon}{2}$ if
$||e^{\frac{j\pi i}{p}}\zeta-\eta||_{\CC^n}<\delta_1$. Moreover, the set
$\{\zeta \in S\colon ||\zeta-e^{\frac{-k\pi i}{p}}\eta||_{\CC^n}<\delta_2 \}$ is empty for $k\neq j$
if $\delta_2\leq\inf_{\zeta \in S} ||\zeta-e^{\frac{\pi i}{p}}(1,0,\dots,0)||_{\CC^n}=\sqrt{2-2\cos \frac{\pi}{p}}$.
Hence if we put $\delta_2:=\min \left\{ \delta_1,\sqrt{2-2\cos \frac{\pi}{p}} \right\}$, then by property (e) of Proposition \ref{Tw:5} 
we have
\begin{multline*} 
u(x)-u(\eta) = 
\frac{1}{p} \sum_{k=0}^{p-1}\int_{||\zeta-e^{\frac{-k\pi i}{p}}\eta||_{\CC^n}\leq\delta_2}  P_p(e^{\frac{-k\pi i}{p}}x,\zeta)
( f(e^{\frac{k\pi i}{p}}\zeta)-f(\eta))\, d\sigma(\zeta)\\
+ \frac{1}{p}\sum_{k=0}^{p-1}\int_{||\zeta-e^{\frac{-k\pi i}{p}}\eta||_{\CC^n}>\delta_2}
P_p(e^{\frac{-k\pi i}{p}}x,\zeta) (f(e^{\frac{k\pi i}{p}}\zeta)-f(\eta))\, d\sigma(\zeta).
\end{multline*} 
To estimate the first term, we apply properties (d) and (e) of Proposition \ref{Tw:5}
\begin{multline*}
\frac{1}{p}\left| \sum_{k=0}^{p-1}\int_{||\zeta-e^{\frac{-k\pi i}{p}}\eta||_{\CC^n}\leq\delta_2}  P_p(e^{\frac{-k\pi i}{p}}x,\zeta)
( f(e^{\frac{k\pi i}{p}}\zeta)-f(\eta))\, d\sigma(\zeta)\right|_{\CC}\\
\leq
 \frac{1}{p}\int_{||\zeta-e^{\frac{-j\pi i}{p}}\eta||_{\CC^n}\leq\delta_2}  P_p(e^{\frac{-j\pi i}{p}}x,\zeta)
| f(e^{\frac{j\pi i}{p}}\zeta)-f(\eta) |_{\CC}d\,\sigma(\zeta)\\
< \frac{\varepsilon}{2p}\int_S P_p(e^{\frac{-j\pi i}{p}}x,\zeta)d\,\sigma(\zeta)\leq \frac{\varepsilon}{2}.
\end{multline*}
To estimate the second term take $M:=||f||_{\infty}=\sup\limits_{\widehat{S}_p}|f|$ and observe that
\begin{multline*}
 \frac{1}{p}\left|\sum_{k=0}^{p-1}\int_{||\zeta-e^{\frac{-k\pi i}{p}}\eta||_{\CC^n}>\delta_2}
P_p(e^{\frac{-k\pi i}{p}}x,\zeta) (f(e^{\frac{k\pi i}{p}}\zeta)-f(\eta))\, d\sigma(\zeta)\right|_{\CC}\\
\leq \frac{2M}{p}\sum_{k=0}^{p-1}\int_{||\zeta-e^{\frac{-k\pi i}{p}}\eta||>\delta_2}\left| P_p(e^{\frac{-j\pi i}{p}}x,\zeta)\right|_{\CC}\,d\sigma(\zeta).
\end{multline*}
By property (g) of the last proposition we can also choose $\delta_3>0$ such that $||x-\eta||_{\CC^n}<\delta_3$ implies
\begin{eqnarray*}
\frac{2M}{p}\sum_{k=0}^{p-1}\int_{||\zeta-e^{\frac{-k\pi i}{p}}\eta||>\delta_2}\left|P_p(e^{\frac{-j\pi i}{p}}x,\zeta)\right|_{\CC}\,d\sigma(\zeta)
<\frac{\varepsilon}{2}.
\end{eqnarray*}

Therefore for every $x\in \widehat{B}_p$ such that $||x-\eta||_{\CC^n}<\delta:=\min \{\delta_2,\delta_3\}$
we conclude that $x\in e^{\frac{j\pi i}{p}}B$ and $|u(x)-u(\eta)|_{\CC}< \varepsilon$,  as desired. 
\end{proof}

\begin{Uw}
\label{Uw:Dirichlet}
Observe that Theorem \ref{Wn:9} allows  us to express the solution of the Dirichlet problem (\ref{I:1}) given in \cite[Theorem 1]{G-M}
in terms of the Poisson kernel for $\widehat{B}_p$:
\begin{equation}
\label{eq:solution}
 u(x)=P_p[f](x)=\frac{1}{p}\sum_{k=0}^{p-1}\int_S \frac{1-|x|^{2p}}{|e^{\frac{-k\pi i}{p}}x-\zeta|^n}f(e^{\frac{k\pi i}{p}})d\sigma(\zeta).
\end{equation}
\end{Uw}

\section{An explicit formula for zonal polyharmonics}
 Finally we will find the representation of zonal polyharmonics in terms of the Gegenbauer polynomials. To this end let us recall
 the following definition
\begin{Df}
\label{Df:6}
The \emph{Gegenbauer polynomial} $C^{\lambda}_m(t)$ of order $\lambda>-\frac{1}{2}$ and of degree $m$ is defined as follows:
\begin{equation}
\label{eq:24}
C^{\lambda}_m(t)=\sum_{k=0}^{[m/2]}(-1)^k\frac{\Gamma(m+\lambda-k)}{k!(m-2k)!\Gamma(\lambda)}(2t)^{m-2k}
\end{equation}
\end{Df}
We have the following generating formula for the Gegenbauer polynomials (Szeg\"o \cite[Formula (4.7.23)]{Sz}, see also
Hua \cite[Formula (7.1.2)]{H} or Morimoto \cite[Formula (5.2)]{M}):
\begin{equation}
\label{eq:25}
\sum_{m=0}^{\infty}C^{\lambda}_m(t)w^m=(1-2tw+w^2)^{-\lambda},
\end{equation}
which allows us to find an explicit formula for the zonal polyharmonics.
\begin{Tw}
\label{Tw:7}
Let $\zeta\in \widehat{S}_p$. Then $P_p(x,\zeta)$ and $Z^p_m(x,\zeta)$ can be expressed by the Gegenbauer polynomials as follows:
\begin{equation}
\label{eq:gegenbauer}
P_p(x,\zeta)=(1-|x|^{2p}) \sum_{m=0}^{\infty}C^{n/2}_m(\frac{x\cdot\overline{\zeta}}{|x||\overline{\zeta}|})|x|^m|\overline{\zeta}|^m,
\end{equation}
\begin{equation}
\label{eq:26}
Z^p_m(x,\zeta)=\left[ C^{n/2}_m(\frac{x\cdot\overline{\zeta}}{|x||\overline{\zeta}|})-
C^{n/2}_{m-2p}(\frac{x\cdot\overline{\zeta}}{|x||\overline{\zeta}|})\right]|x|^m|\overline{\zeta}|^m,
\end{equation} and in consequence we get also
\begin{multline}
\label{eq:27}
Z_m^p(x,\zeta) =  \sum_{k=0}^{\left[\frac{m}{2}\right]}(-1)^{k}\frac{n(n+2)\cdots(n+2m-2p-2k-2)}{k!2^{k}(m-2k)!}\\
\times \left[ (n+2(m-p-k))\cdots (n+2(m-k-1))+ (-1)^p2(k-p+1)\cdots 2k \right]\\
\times (x\cdot\overline{\zeta})^{m-2k}|x|^{2k}|\overline{\zeta}|^{2k}.
\end{multline}
\end{Tw}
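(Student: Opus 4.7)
The plan is to prove the three displayed formulas in order, each building on the previous.

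For (\ref{eq:gegenbauer}), the key observation is that the denominator in the formula for $P_p$ from Theorem \ref{Wn:7} matches the right-hand side of the generating identity (\ref{eq:25}) after a simple substitution. Since the complex extension of the real norm satisfies $|x|^2 = x^2$ and $|\overline{\zeta}|^2 = \overline{\zeta}^2$, one has
\[
x^2\overline{\zeta}^2 - 2x\cdot\overline{\zeta} + 1 = 1 - 2tw + w^2
\]
with $t = \frac{x\cdot\overline{\zeta}}{|x||\overline{\zeta}|}$ and $w = |x||\overline{\zeta}|$. Taking $\lambda = n/2$ in (\ref{eq:25}) and then multiplying by $1 - |x|^{2p}$ yields (\ref{eq:gegenbauer}). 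Convergence of the series on compacta is guaranteed because, for $\zeta \in \widehat{S}_p$, $\overline{\zeta}^2$ is a $p$-th root of unity, hence $||\overline{\zeta}||_{\CC} = 1$ and $||w||_{\CC} = ||x||_{\CC} < 1$ uniformly on compacta of $\widehat{B}_p$.

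For (\ref{eq:26}), I would use that $P_p(x,\zeta) = \sum_{m=0}^{\infty} Z^p_m(x,\zeta)$ by Theorem \ref{Wn:7}, where each $Z^p_m(\cdot,\zeta)$ is homogeneous of degree $m$ in $x$ (and the series converges uniformly on compacta). It therefore suffices to extract the $m$-th $x$-homogeneous part of the right-hand side of (\ref{eq:gegenbauer}). Since each term $C^{n/2}_\ell(t)\,|x|^\ell |\overline{\zeta}|^\ell$ is itself homogeneous of degree $\ell$ in $x$ (every monomial in the expansion of $C^{n/2}_\ell$ contributes $(x\cdot\overline{\zeta})^{\ell-2k}|x|^{2k}|\overline{\zeta}|^{2k}$), only the indices $\ell = m$ (from the factor $1$) and $\ell = m - 2p$ (from $-|x|^{2p}$) contribute. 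This gives $C^{n/2}_m(t)|x|^m|\overline{\zeta}|^m - C^{n/2}_{m-2p}(t)|x|^m|\overline{\zeta}|^{m-2p}$; using $|\overline{\zeta}|^{2p} = 1$ on $\widehat{S}_p$ to rewrite $|\overline{\zeta}|^{m-2p}=|\overline{\zeta}|^m$, this reduces to $[C^{n/2}_m(t) - C^{n/2}_{m-2p}(t)]\,|x|^m |\overline{\zeta}|^m$, which is (\ref{eq:26}).

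For (\ref{eq:27}), I would substitute the explicit polynomial expansion (\ref{eq:24}) of the two Gegenbauer polynomials into (\ref{eq:26}). Both $C^{n/2}_m(t)|x|^m|\overline{\zeta}|^m$ and $C^{n/2}_{m-2p}(t)|x|^m|\overline{\zeta}|^m$ naturally expand into monomials $(x\cdot\overline{\zeta})^{m-2k}|x|^{2k}|\overline{\zeta}|^{2k}$; for the second one, I would reindex the summation variable by $k\mapsto k-p$ (absorbing the extra $|x|^{2p}|\overline{\zeta}|^{2p}=|x|^{2p}$) so that both sums run over the same monomials. The gamma-function ratios convert to even-spaced products via the identity $\Gamma(n/2+N)\,2^N/\Gamma(n/2) = n(n+2)\cdots(n+2N-2)$. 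Factoring out the common prefactor $n(n+2)\cdots(n+2m-2p-2k-2)/(k!\,2^k\,(m-2k)!)$ from both contributions, the residual factor from the first sum is $(n+2(m-p-k))\cdots(n+2(m-k-1))$, and the residual from the second is proportional to $2^p k(k-1)\cdots(k-p+1) = 2(k-p+1)\cdots 2k$, producing (\ref{eq:27}).

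The main obstacle is the purely combinatorial bookkeeping in this last step: one must track signs carefully during the reindexing $k\mapsto k-p$, interpret empty products as $1$ (arising when $m-p-k\leq 0$), and verify that for $k<p$ the $C^{n/2}_{m-2p}$ contribution vanishes automatically because the product $2(k-p+1)\cdots 2k$ contains a zero factor. The first two formulas, by contrast, are essentially one-line applications of the generating identity (\ref{eq:25}) and of the uniqueness of the $x$-homogeneous decomposition of the analytic function $P_p(\cdot,\zeta)$.
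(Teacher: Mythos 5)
Your proposal is correct and follows essentially the same route as the paper's proof: substitute $t=\frac{x\cdot\overline{\zeta}}{|x||\overline{\zeta}|}$, $w=|x||\overline{\zeta}|$ into the generating identity (\ref{eq:25}) to get (\ref{eq:gegenbauer}), identify $Z^p_m$ with the degree-$m$ homogeneous part to get (\ref{eq:26}), and then expand via (\ref{eq:24}) with the reindexing $k\mapsto k-p$ and the observation that the product $2(k-p+1)\cdots 2k$ vanishes for $k<p$ to get (\ref{eq:27}). No substantive differences.
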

\begin{proof}
Using (\ref{eq:25}) and putting
\begin{gather*}
t:=\frac{x\cdot\overline{\zeta}}{|x||\overline{\zeta}|}\qquad\textrm{and}\qquad w:=|x||\overline{\zeta}|
\end{gather*}
we get from Theorem \ref{Wn:7} 
\begin{equation*}
P_p(x,\zeta)=(1-|x|^{2p}) \sum_{m=0}^{\infty}C^{n/2}_m(\frac{x\cdot\overline{\zeta}}{|x||\overline{\zeta}|})|x|^m|\overline{\zeta}|^m,
\end{equation*}
and (\ref{eq:gegenbauer}) is proved.

By the same theorem we conclude that $Z^p_m$ has to be equal to the terms of degree $m$ on the right of (\ref{eq:gegenbauer}), so
\begin{equation*}
Z^p_m(x,\zeta)=\left[ C^{n/2}_m(\frac{x\cdot\overline{\zeta}}{|x||\overline{\zeta}|})-C^{n/2}_{m-2p}
(\frac{x\cdot\overline{\zeta}}{|x||\overline{\zeta}|})\right]|x|^m|\overline{\zeta}|^m,
\end{equation*}  
as desired.

From (\ref{eq:24}) we have after calculations
\begin{equation*}
C^{n/2}_m(\frac{x\cdot\overline{\zeta}}{|x||\overline{\zeta}|})=
\sum_{k=0}^{\left[\frac{m}{2}\right]}(-1)^k\frac{n(n+2)\cdots(n+2m-2k-2)}{k!2^k(m-2k)!}(x\cdot\overline{\zeta})^{m-2k}(|x||\overline{\zeta}|)^{2k-m}.
\end{equation*}
Hence
\begin{eqnarray*}
C^{n/2}_{m-2p}(\frac{x\cdot\overline{\zeta}}{|x||\overline{\zeta}|})&=&
\sum_{k=0}^{\left[\frac{m-2p}{2}\right]}(-1)^k\frac{n(n+2)\cdots(n+2m-4p-2k-2)}{k!2^k(m-2p-2k)!}\\
&\times &(x\cdot\overline{\zeta})^{m-2p-2k}(|x||\overline{\zeta}|)^{2k-m+2p}.
\end{eqnarray*}

Replacing in the above formula the index $k$ by $k-p$ we obtain
\begin{multline*}
C^{n/2}_{m-2p}(\frac{x\cdot\overline{\zeta}}{|x||\overline{\zeta}|})=
\sum_{k=p}^{\left[\frac{m}{2}\right]}(-1)^{k}\frac{n(n+2)\cdots(n+2m-2p-2k-2)}{k!2^{k}(m-2k)!}\\
\times  (-1)^p 2^p(k-p+1)(k-p+2)\cdots k(x\cdot\overline{\zeta})^{m-2k}(|x|  |\zeta|)^{2k-m}.
\end{multline*}
Since the expression  $(-1)^p 2^p(k-p+1)(k-p+2)\cdots k$ vanishes for $k=0,1,\dots,p-1$, we can sum in the last formula from $k=0$.
Finally from the above considerations and from (\ref{eq:26}) we get (\ref{eq:27}).
\end{proof}

\section{The connection with the Cauchy-Hua formula}
In Section 6 we have found the generalised Poisson kernel $P_p$ for polyharmonic functions.
In particular, for $p=1$ it is the same as the classical Poisson kernel for harmonic functions. In this section we will show 
the connection with another kernel for holomorphic functions. Namely we will find the connection between our Poisson-type formula
and the Cauchy-Hua formula. For this purpose we introduce the following definition:
\begin{Df}[see {\cite[Definition 5.5]{M}}]
The \emph{Cauchy-Hua kernel} $ H(z,w) $ is defined by
\begin{equation}
\label{eq:Hua}
 H(z,w)=\frac{1}{(\overline{w}^2z^2- 2 \overline{w}\cdot z +1)^{n/2}}.
\end{equation}
\end{Df} 

\begin{Uw}
 \label{re:Hua}
 The Cauchy-Hua kernel $H(z,w)$ is holomorphic in $z$ and antiholomorphic in $w$ on the set 
 $$
 LD:=\{(z,w)\in\CC^n\times\CC^n\colon L(z)L(w)<1\}
 $$
 and satisfies $H(z,w)=\overline{H_(w,z)}$ (see Morimoto \cite[Lemma 5.6]{M}).
\end{Uw}

 We will compare the classical Poisson kernel $P(z,w)$ defined by (\ref{eq:18}) and the Poisson kernels $P_p(z,w)$ for $\widehat{B}_p$ given by
 (\ref{eq:21}) to the Cauchy-Hua kernel (\ref{eq:Hua}).
 First, observe that a comparison of (\ref{eq:18}), (\ref{eq:21}) and (\ref{eq:Hua}) gives the following connection between the kernels
 \begin{equation}
  \label{eq:compare}
  P(z,w)=(1-z^2\overline{w}^2) H(z,w)\quad\textrm{and}\quad P_p(z,w)=(1-(z^2\overline{w}^2)^p) H(z,w).
 \end{equation}
 Hence the Poisson kernels $P(z,w)$ and $P_p(z,w)$ have the same properties from Remark \ref{re:Hua} as the Cauchy-Hua kernel.
 Moreover we conclude that
 \begin{St}
 \label{pr:conv}
  The Poisson kernels $P_p(z,w)$ converge to the Cauchy-Hua kernel $H(z,w)$ almost uniformly on $LD$ as $p$ tends to infinity.
 \end{St}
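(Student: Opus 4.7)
The plan is to exploit the explicit algebraic relation
\[
P_p(z,w) - H(z,w) = -\bigl(z^2\overline{w}^2\bigr)^p H(z,w),
\]
which is just a restatement of the second identity in (\ref{eq:compare}). So almost uniform convergence on $LD$ reduces to two facts: (i) the factor $(z^2\overline{w}^2)^p$ tends to zero almost uniformly, and (ii) $H(z,w)$ is locally bounded on $LD$. Since $H$ is holomorphic in $z$ and antiholomorphic in $w$ on $LD$ (Remark \ref{re:Hua}), it is continuous and hence bounded on any compact $K\subset LD$; this disposes of (ii) immediately.

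The substance is (i), and the main estimate is
\[
\bigl|z^2\overline{w}^2\bigr|_{\CC}\le L(z)^2 L(w)^2.
\]
First I would show $|z^2|_{\CC}\le \|z\|^2$, which is just $\bigl|\sum_{j=1}^n z_j^2\bigr|\le \sum_{j=1}^n|z_j|^2$. This in turn makes $\sqrt{\|z\|^4-|z^2|_{\CC}^2}$ real and nonnegative, so by the definition of $L$ we obtain
\[
L(z)^2=\|z\|^2+\sqrt{\|z\|^4-|z^2|_{\CC}^2}\ \ge\ \|z\|^2\ \ge\ |z^2|_{\CC},
\]
and similarly $L(w)^2\ge|w^2|_{\CC}=|\overline{w}^2|_{\CC}$. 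Multiplying the two inequalities yields the claim.

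Given a compact set $K\subset LD$, continuity of $L$ on $\CC^n$ and compactness of the projections of $K$ give a constant $c<1$ with $L(z)L(w)\le c$ for all $(z,w)\in K$. Therefore
\[
\bigl|(z^2\overline{w}^2)^p\bigr|=|z^2\overline{w}^2|_{\CC}^p\le c^{2p}\qquad\text{on }K,
\]
and combining with the bound $\sup_K|H|\le M<\infty$ we conclude
\[
\sup_{(z,w)\in K}\bigl|P_p(z,w)-H(z,w)\bigr|\le M c^{2p}\longrightarrow 0,
\]
which is exactly almost uniform convergence on $LD$. The only genuinely nontrivial step is the inequality $|z^2|_{\CC}\le L(z)^2$, but as shown above it drops out of the definition of $L$ together with the elementary triangle-type bound $\bigl|\sum z_j^2\bigr|\le\sum|z_j|^2$; everything else is a routine compactness argument.
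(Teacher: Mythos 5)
Your proof is correct and follows essentially the same route as the paper: both use the identity $P_p-H=-(z^2\overline{w}^2)^pH$ from (\ref{eq:compare}), bound $|z^2\overline{w}^2|_{\CC}$ by a quantity that is $<1$ on $LD$ (the paper via $\|z\|^2\|w\|^2$ with $\|z\|\le L(z)$, you directly via $L(z)^2L(w)^2$), and finish with compactness and local boundedness of $H$. Your explicit verification that $|z^2|_{\CC}\le\|z\|^2\le L(z)^2$ is a nice touch but does not change the argument.
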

 \begin{proof}
  Using (\ref{eq:compare}) we estimate
 \begin{equation}
 \label{eq:estimate}
 (1-\|z\|^{2p}\|w\|^{2p})|H(z,w)|_{\CC}\leq |P_p(z,w)|_{\CC}\leq (1+\|z\|^{2p}\|w\|^{2p})|H(z,w)|_{\CC}.
 \end{equation}
 Since $\|z\|\leq L(z)$ for every $z\in\CC^n$, we conclude that for every compact subset $E$ of $LD$ there exists $\alpha_E<1$ 
 such that $\sup_{(z,w)\in E}\|z\|\cdot\| w\|=\alpha_E$. Hence, by (\ref{eq:estimate})
 \begin{equation}
 \label{eq:conv}
 \sup_{(z,w)\in E}|P_p(z,w)-H(z,w)|_{\CC}\leq \alpha_E^{2p}\sup_{(z,w)\in E}|H(z,w)|_{\CC}
 \end{equation}
 and the right hand side of (\ref{eq:conv}) tends to zero as $p\to\infty$, which completes the proof.
 \end{proof}
 
 \begin{Uw}
 We will improve the estimation (\ref{eq:estimate}) in the special case when $(z,w)\in LD$ is replaced by $(x,\zeta)\in \widehat{B}_p\times
 \widehat{S}_p$. To this end first observe that by Remark \ref{Re:1} we conclude that $\widehat{B}_p\times\widehat{S}_p\subset LD$,
 so $H(x,\zeta)$ is well defined. Moreover, since 
 $0<1-(x^2\overline{\zeta}^2)^p <1$, we get
 $$|P_p(x,\zeta)|_{\CC}=(1-(x^2\overline{\zeta}^2)^p) |H(x,\zeta)|_{\CC}\quad\textrm{for every}\quad (x,\zeta)\in\widehat{B}_p\times\widehat{S}_p.$$  
 \end{Uw}
 
 \bigskip

The Cauchy-Hua kernel has the following reproducing property
\begin{St}[Cauchy-Hua formula, {\cite[Theorem 5.7]{M}}]
\label{pr:3}
Let $f$ be a holomorphic function on $LB$ and continuous on $LB\cup LS$. Then 
\begin{equation*}
f(z)=\int\limits_{LS}H_1(z,w)f(w)\,d\tilde{\sigma}(w)\qquad\textrm{for}\quad z \in LB,
\end{equation*}
where the integration is performed with respect to the normalised invariant measure $d\tilde{\sigma}(w)$ on $LS$:
\begin{equation*}
\int\limits_{LS}F(w)\,d\tilde{\sigma}(w):=\frac{1}{\pi}\int\limits_{0}^{\pi}\int\limits_{S}F(e^{i\varphi }\zeta)\,
d\sigma(\zeta)d\varphi. 
\end{equation*}
\end{St}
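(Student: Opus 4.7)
The plan is to reduce the Cauchy-Hua formula to reproducing properties already established on the real sphere $S$, via a polynomial approximation step followed by Fourier orthogonality in the rotation angle. First I would observe that the Lie ball $LB$ is balanced, so the dilates $f_r(z):=f(rz)$ are holomorphic on a neighbourhood of $\overline{LB}$ for $r\in(0,1)$ and $f_r\to f$ uniformly on $\overline{LB}$ as $r\to 1^-$. Each $f_r$ is a uniform limit of its partial Taylor polynomials on $\overline{LB}$, so holomorphic polynomials are sup-norm dense in the class of functions to which the proposition applies. Since $L(z)<1$ on $LB$ and $L(w)=1$ on $LS$, the kernel $H(z,\cdot)$ is bounded on $LS$ for each fixed $z\in LB$, so uniform convergence passes the identity through the limit. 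By linearity it is then enough to verify it for each holomorphic homogeneous component $q_m$ of a polynomial.

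Next I would carry out the angular integration. Parametrising $w=e^{i\varphi}\zeta$ with $\zeta\in S$ and $\varphi\in[0,\pi)$, and using $\zeta^2=1$, the Gegenbauer generating formula (\ref{eq:25}) gives
\[
H(z,e^{i\varphi}\zeta)=\sum_{l=0}^{\infty}C_l^{n/2}\!\left(\frac{\zeta\cdot z}{|z|}\right)|z|^l e^{-il\varphi},
\]
while homogeneity yields $q_m(e^{i\varphi}\zeta)=e^{im\varphi}q_m(\zeta)$. Inserting these into the definition of $d\tilde\sigma$ and using $\frac{1}{\pi}\int_0^\pi e^{i(m-l)\varphi}\,d\varphi=\delta_{m,l}$ kills every term except $l=m$, reducing the claim to
\[
\int_S C_m^{n/2}\!\left(\frac{\zeta\cdot z}{|z|}\right)|z|^m q_m(\zeta)\,d\sigma(\zeta)=q_m(z).
\]

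To establish this real-sphere identity, I would iterate the telescoping relation $C_l^{n/2}(\cdot)|z|^l=Z_l(z,\zeta)+|z|^2 C_{l-2}^{n/2}(\cdot)|z|^{l-2}$ extracted from (\ref{eq:26}) with $p=1$, obtaining
\[
C_m^{n/2}\!\left(\frac{\zeta\cdot z}{|z|}\right)|z|^m=\sum_{k=0}^{[m/2]}|z|^{2k}Z_{m-2k}(z,\zeta).
\]
Then I would decompose $q_m$ by Lemma \ref{Le:2} as $q_m(x)=\sum_j|x|^{2j}h_{m-2j}(x)$ with $h_{m-2j}$ harmonic, so that on $\zeta\in S$ one has $q_m(\zeta)=\sum_j h_{m-2j}(\zeta)$. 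The orthogonality of spherical harmonics of different degrees (Lemma \ref{Le:1}) kills all cross terms except $k=j$, and the classical reproducing property $\int_S Z_l(z,\zeta)h_l(\zeta)\,d\sigma(\zeta)=h_l(z)$ then gives $\sum_j|z|^{2j}h_{m-2j}(z)=q_m(z)$. Both sides are polynomial in $z$, so the identity, first obtained for $z\in\RR^n$, extends to $z\in\CC^n$ by analytic continuation.

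The main obstacle is justifying the interchange of infinite sum and integral in the Gegenbauer expansion of $H$; I would handle this via the uniform growth bound $|C_l^{n/2}(t)|=O(l^{n-2})$ for bounded $t$ (compare Lemma \ref{Le:7}) together with the strict inequality $L(z)L(w)<1$ on compact subsets of $LB\times LS$, which ensures the expansion converges absolutely and uniformly there. Everything else is bookkeeping with formulas already assembled in Sections 3--7.
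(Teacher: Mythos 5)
The paper does not prove this proposition at all: it is quoted verbatim from Morimoto \cite[Theorem 5.7]{M}, so there is no internal proof to compare against, and your argument has to stand on its own. Your overall strategy --- dilate, reduce to homogeneous polynomials, expand $H$ by the Gegenbauer generating function, integrate out the angle, and finish with the zonal-harmonic reproducing property on $S$ --- is a legitimate and essentially standard route to the Cauchy--Hua formula. But there is one concrete error in the pivotal step. The claimed orthogonality
\begin{equation*}
\frac{1}{\pi}\int_0^\pi e^{i(m-l)\varphi}\,d\varphi=\delta_{m,l}
\end{equation*}
is false: the integral equals $\frac{2i}{(m-l)\pi}\neq 0$ whenever $m-l$ is odd, because the angular variable runs only over $[0,\pi)$ (this half-range is forced by the parametrisation of $LS$, since $e^{i\pi}\zeta=-\zeta\in S$). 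So the angular integration alone does not reduce the sum to the single term $l=m$. The argument is rescuable, but only by invoking the $\zeta$-integration for the surviving terms: for $l\not\equiv m\pmod 2$ the integrand $C_l^{n/2}(\zeta\cdot z/|z|)\,q_m(\zeta)$ is odd under $\zeta\mapsto-\zeta$ (equivalently, your telescoped expansion of $C_l^{n/2}$ involves only spherical harmonics of degrees $\equiv l\pmod 2$, while $q_m|_S$ involves only degrees $\equiv m\pmod 2$, and Lemma \ref{Le:1} applies), so $\int_S(\cdots)\,d\sigma(\zeta)=0$. You must say this; as written the step is wrong.

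Two smaller points. First, the bound $|C_l^{n/2}(t)|=O(l^{n-2})$ holds for $t\in[-1,1]$ but fails for complex $t$, where $C_l^{\lambda}(t)$ grows geometrically in $l$; since your $t=\zeta\cdot z/|z|$ is genuinely complex, the interchange of sum and integral should instead be justified by locating the singularities of $\lambda\mapsto(1-2t\lambda+\lambda^2)^{-n/2}$, which is exactly what the condition $L(z)L(w)<1$ on compact subsets of $LD$ controls (Remark \ref{re:Hua}). Second, $f$ is assumed continuous only on $LB\cup LS$, and $LS$ is the Shilov boundary, a proper subset of $\partial LB$; hence ``$f_r\to f$ uniformly on $\overline{LB}$'' does not typecheck. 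What you actually need, and what is true, is that $f_r\to f$ uniformly on the compact set $LS$ (because $[\tfrac12,1]\cdot LS\subset LB\cup LS$ is compact, so $f$ is uniformly continuous there) together with pointwise convergence at the fixed interior point $z$. With these repairs the proof goes through.
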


In the next theorem we use
\begin{St} [The Siciak theorem \cite{S}, see also {\cite[Theorem 1]{F-M}} and {\cite[Theorem 3.38]{M}}]
\label{pr:2}
The Lie ball  $ LB$ is the harmonic hull of the real ball $B$, that is, every harmonic function on $ B$ is holomorphically extended to the Lie
ball $ LB$.
\end{St}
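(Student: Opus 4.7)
My plan is to construct the holomorphic extension of an arbitrary harmonic function on $B$ directly from the Poisson integral representation, exploiting the identity $P(z,w)=(1-z^{2}\overline{w}^{2})H(z,w)$ in (\ref{eq:compare}), which already displays the Poisson kernel as a manifestly holomorphic object in its first variable on $LB$.

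First I would reduce to a situation in which the classical Poisson formula applies on the unit sphere. If $u$ is harmonic on $B$, then for each $r\in(0,1)$ the dilate $u_{r}(x):=u(rx)$ is harmonic in a neighbourhood of $\overline{B}$, and the classical Poisson representation yields
\begin{equation*}
u(rx)=\int_{S}P(x,\zeta)u(r\zeta)\,d\sigma(\zeta)\quad\textrm{for}\quad x\in B,
\end{equation*}
so, substituting $y=rx$ (meaningful for $y\in rB$),
\begin{equation*}
u(y)=\int_{S}P(y/r,\zeta)u(r\zeta)\,d\sigma(\zeta).
\end{equation*}

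The next step is to extend this integral complex-analytically in $y$. By Remark \ref{re:Hua} and the identity (\ref{eq:compare}), for each fixed $\zeta\in S$ the map $z\mapsto P(z,\zeta)$ is holomorphic on $LB$, since its denominator coincides with that of $H(z,\zeta)$ and the latter is non-vanishing on $LB\times LS$. Consequently $z\mapsto P(z/r,\zeta)$ is holomorphic on $r\cdot LB$, and, the integrand being holomorphic in $z$ and jointly continuous in $(z,\zeta)$ on compact subsets of $r\cdot LB\times S$, a standard Morera-plus-Fubini argument shows that the integral defines a holomorphic function of $y\in r\cdot LB$ extending $u$. Since $r\in(0,1)$ is arbitrary and $LB=\bigcup_{0<r<1}r\cdot LB$, the extensions agree on overlaps by the identity principle and glue to a holomorphic function on all of $LB$.

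The main obstacle is the maximality claim implicit in the phrase \emph{harmonic hull}, namely that $LB$ cannot be enlarged. To establish this I would fix a boundary point $z_{0}\in\partial LB$; from the definition of the Lie norm there exists $\zeta_{0}\in S$ with $z_{0}^{2}-2z_{0}\cdot\zeta_{0}+1=0$, so $H(\cdot,\zeta_{0})$ is singular at $z_{0}$. One then constructs a harmonic function on $B$ of the form $u(x)=\sum_{m}a_{m}Z_{m}(x,\zeta_{0})$ whose coefficients $a_{m}$ are chosen, using sharpness in the growth estimate of Lemma \ref{Le:7} along Lie-extremal directions, so that the resulting series has Lie-radius of convergence exactly $1$ and admits no holomorphic continuation across $z_{0}$. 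Calibrating $a_{m}$ against the precise asymptotics of $|Z_{m}(z,\zeta_{0})|_{\CC}$ relative to $L(z)$ is the delicate part of Siciak's original argument in \cite{S}.
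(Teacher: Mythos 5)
The paper offers no proof of this proposition --- it is quoted from Siciak \cite{S} and Morimoto \cite{M} --- so there is nothing internal to compare against; I will judge your argument on its own terms. The extension half (your first two paragraphs) is sound and is essentially the standard proof: dilate, represent $u$ on $rB$ by the Poisson integral over $S$, and continue the kernel holomorphically in its first variable. Do make explicit that the holomorphy of $z\mapsto P(z,\zeta)$ on $LB$ for fixed $\zeta\in S$ amounts to the non-vanishing of $z^2-2z\cdot\zeta+1$ on $LB\times S$ together with a single-valued branch of the power $n/2$ there; within this paper that is exactly what Remark \ref{re:Hua} (Morimoto's Lemma 5.6) supplies, so the step is legitimate but it is the real content of this half, not a formality. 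The gluing over $r\in(0,1)$ via $LB=\bigcup_{0<r<1}rLB$ and the identity principle is fine, since $L$ is a norm and the extensions all agree with $u$ on a real open subset of $B$.

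The maximality paragraph is a genuine gap. What you give is a plan whose decisive step --- choosing coefficients $a_m$ so that $\sum_m a_mZ_m(\cdot,\zeta_0)$ is harmonic on $B$ yet non-continuable across $z_0$ --- is explicitly deferred (``the delicate part of Siciak's original argument''), and the tool you propose to calibrate against cannot do the job: the bound in Lemma \ref{Le:7} is phrased through $|x|_{\CC}$, the modulus of the complexified bilinear form $|x|=(\sum x_j^2)^{1/2}$, which vanishes on the isotropic cone and therefore neither controls nor detects the Lie norm $L(z)$, so ``sharpness along Lie-extremal directions'' is not available from it. Note first that the paper only ever uses the extension half (in the proof of Theorem \ref{th:4}), so for the purposes of this paper you could stop there. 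If you do want the hull statement, a much shorter route avoids any series construction: granted that every $z_0\in\partial LB$ admits $\zeta_0\in S$ with $(z_0-\zeta_0)^2=0$ (this is the characterisation of $LB$ as the component of $0$ in the complement of the closed set $\bigcup_{\zeta\in S}\{z:(z-\zeta)^2=0\}$, and it does require proof), the single harmonic function $x\mapsto|x-\zeta_0|^{2-n}$ on $B$ (or $\log|x-\zeta_0|$ when $n=2$) has holomorphic continuation $((z-\zeta_0)^2)^{(2-n)/2}$, which is unbounded near $z_0$ and hence continues past no such boundary point.
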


By Proposition \ref{pr:conv} it is natural to expect that the Cauchy-Hua formula is a limit of solutions of Dirichlet problems
for $p$-harmonic functions as $p$ tends
to infinity. Indeed, we have
\begin{Tw} 
\label{th:4}
Let $u$ be a holomorphic function on $LB$ and continuous on $LB\cup LS$ and let $(u_p)_{p=1}^{\infty}$ be a sequence
of polyharmonic functions of increasing orders $p$ on $B$, which are continuously extended on
$\widehat{S}_p$ putting $u_p=u$.
Then
$$
\lim_{p\to\infty}u_p(z)=\int\limits_{LS}H(z,w)u(w)\,d\tilde{\sigma}(w)=u(z)\quad\textrm{for}\quad z \in LB.
$$  
\end{Tw}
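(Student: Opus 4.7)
The plan is to unfold the explicit Poisson integral representation of $u_p$, separate out the prefactor $(1-|z|^{2p})$, recognise the remaining sum as a Riemann sum whose limit is the Cauchy--Hua integral of $u$, and then invoke the Cauchy--Hua reproducing formula. By Theorem~\ref{Wn:9} and formula~(\ref{eq:solution}) in Remark~\ref{Uw:Dirichlet},
\[
u_p(z)=\frac{1}{p}\sum_{k=0}^{p-1}\int_{S}\frac{1-|z|^{2p}}{|e^{-k\pi i/p}z-\zeta|^{n}}\,u(e^{k\pi i/p}\zeta)\,d\sigma(\zeta),
\]
valid initially for $z\in\widehat{B}_p$. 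Applying the factorisation $P_p(z,w)=(1-(z^{2}\overline{w}^{2})^{p})H(z,w)$ from (\ref{eq:compare}), together with Proposition~\ref{Tw:5}(a) and the identity $\overline{(e^{k\pi i/p}\zeta)}^{2p}=\zeta^{2p}=1$ for $\zeta\in S$, one rewrites this as
\[
u_p(z)=(1-|z|^{2p})\cdot\frac{1}{p}\sum_{k=0}^{p-1}\int_{S}H(z,e^{k\pi i/p}\zeta)\,u(e^{k\pi i/p}\zeta)\,d\sigma(\zeta).
\]
Because $L(e^{k\pi i/p}\zeta)=L(\zeta)=1$ and $L(z)<1$ for $z\in LB$, every factor $H(z,e^{k\pi i/p}\zeta)$ is well defined (the pair lies in $LD$); this formula therefore extends $u_p$ naturally to all of $LB$, which is the sense in which we understand $u_p(z)$ in the theorem.

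Next, the averaged sum is a Riemann sum. Set
\[
F(\varphi):=\int_{S}H(z,e^{i\varphi}\zeta)\,u(e^{i\varphi}\zeta)\,d\sigma(\zeta),
\]
a continuous function of $\varphi\in[0,\pi]$ by continuity of $H$ on $LD$ and of $u$ on $LS$. Then
\[
\frac{1}{p}\sum_{k=0}^{p-1}F\!\left(\tfrac{k\pi}{p}\right)=\frac{1}{\pi}\sum_{k=0}^{p-1}\frac{\pi}{p}\,F\!\left(\tfrac{k\pi}{p}\right)\xrightarrow[p\to\infty]{}\frac{1}{\pi}\int_{0}^{\pi}F(\varphi)\,d\varphi,
\]
and by the very definition of $d\tilde{\sigma}$ in Proposition~\ref{pr:3} this limit equals $\int_{LS}H(z,w)u(w)\,d\tilde{\sigma}(w)$. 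For the prefactor, the inequality $|z^{2}|\leq\|z\|^{2}\leq L(z)^{2}<1$, which is immediate from the definition of $L$, gives $|z|^{2p}=(z^{2})^{p}\to 0$ and hence $(1-|z|^{2p})\to 1$. Combining these two limits yields
\[
\lim_{p\to\infty}u_p(z)=\int_{LS}H(z,w)\,u(w)\,d\tilde{\sigma}(w),
\]
and the Cauchy--Hua reproducing formula (Proposition~\ref{pr:3}), applicable because $u$ is holomorphic on $LB$ and continuous on $LB\cup LS$, identifies the right-hand side with $u(z)$.

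The argument is essentially an assembly of ingredients already developed in the paper: the Poisson integral representation, the factorisation~(\ref{eq:compare}), Riemann-sum convergence for a continuous integrand, and the Cauchy--Hua formula. The only mildly delicate point, and hence the one I would flag as the principal obstacle, is the interpretation of $u_p(z)$ for $z\in LB\setminus\widehat{B}_p$: one must read $u_p$ through its Poisson integral extension in order to make the pointwise limit meaningful on all of $LB$. Once that convention is adopted, no further analytic subtlety arises.
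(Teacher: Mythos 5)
Your proof is correct and follows essentially the same route as the paper: unfold the Poisson integral representation from Remark \ref{Uw:Dirichlet}, recognise the average over $k$ as a Riemann sum converging to $\frac{1}{\pi}\int_0^\pi\int_S H(z,e^{i\varphi}\zeta)u(e^{i\varphi}\zeta)\,d\sigma(\zeta)\,d\varphi$, and conclude with the Cauchy--Hua formula. The only (harmless) divergence is in handling $z\in LB\setminus\widehat{B}_p$: you extend each $u_p$ to $LB$ through the kernel formula and take the pointwise limit there, whereas the paper passes to the limit for $x\in B$ and then replaces $x\in B$ by $z\in LB$ via uniqueness of analytic continuation.
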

\begin{proof}
First observe that, 
since $\widehat{B}_p$ is contained in the Lie ball $LB$, by Proposition \ref{pr:2} and by the Almansi theorem \cite[Proposition 1.3]{A-C-L} 
we conclude that every polyharmonic function on $B$
is polyharmonically extended to $\widehat{B}_p$ (see also \cite[Lemma 1]{G-M}). So we may assume that $u_p$ are $p$-polyharmonic functions
on $\widehat{B}_p$ for every $p\in\NN$.
Since $(u_p)_{p=1}^{\infty}$ is a sequence of solutions of problems
\begin{equation*}
   \begin{cases}
   \Delta^pu_p(x)=0, & \text{} x\in \widehat{B}_p \\
   u_p(x)=u(x), & \mbox{} x\in \widehat{S}_p,
   \end{cases}
   \end{equation*}
by (\ref{eq:solution}) we conclude that
$$u_p(x) =\frac{1}{p}\sum_{k=0}^{p-1} \int\limits_{S}
\frac{1-\left|x\right|^{2p}}{\left|e^\frac{-k\pi i}{p}x-\zeta\right|^n } 
u(e^\frac{k\pi i}{p}\zeta) d\,\sigma(\zeta)\quad \textrm{for every}\quad  p\in \mathbb{N}. $$
Passing to the limit as $ p\rightarrow \infty $ and using the definitions of the Riemann integral and the Cauchy-Hua kernel we obtain
for $x\in B$ the sequence of equalities
\begin{eqnarray*} 
\lim_{p\rightarrow \infty}u_p(x) & =  & \frac{1}{\pi} \int\limits_{0}^{\pi} \int\limits_{S}
\frac{u(e^{i\varphi}\zeta)}{(e^{-2i\varphi}x^2\zeta^2-2e^{-i\varphi} x \cdot\zeta+1)^{\frac{n}{2}}}\,d\sigma(\zeta)d\varphi\\
& = & \frac{1}{\pi} \int\limits_{0}^{\pi} \int\limits_{S}
H(x,e^{i\varphi}\zeta)u(e^{i\varphi}\zeta)\,d\sigma(\zeta)d\varphi.
\end{eqnarray*}
So by Proposition \ref{pr:3} we conclude that
\begin{eqnarray*}
 \lim_{p\rightarrow \infty}u_p(x) &=& \int\limits_{LS}H(x, w)u(w)\,d\tilde{\sigma}(w)=u(x).
\end{eqnarray*}
By the uniqueness of the analytic continuation we may replace in the above equality $x\in B$ by $z\in LB$ and
we get the assertion.
\end{proof} 

\begin{Uw}
Let us observe that Theorem \ref{th:4} gives us the construction of the sequence of polyharmonic functions $u_p$ on $LB$, which approximates 
the given holomorphic function $u$ on $LB$ and is continuous on $LS$.
\end{Uw}

\end{document}